\newtheorem{hypothesis}{Hypothesis}
\newcommand{\AF}{{\bar A}}
\newcommand{\AK}{{\hat A}}
\newcommand{\AO}{A}
\newcommand{\BO}{B}
\newcommand{\Ext}{\operatorname{\sf Ext}}
\renewcommand{\ge}{\geqslant}
\newcommand{\Hom}{\operatorname{\sf Hom}}
\newcommand{\JA}{J_\AO}
\newcommand{\JQ}{J_Q}
\newcommand{\Ker}{\operatorname{\sf Ker}}
\renewcommand{\le}{\leqslant}
\newcommand{\cO}{{\mathcal{O}}}
\newcommand{\Rad}{\operatorname{\sf Rad}}
\newcommand{\rank}{\operatorname{\sf rank}}
\newcommand{\Tor}{\operatorname{\sf Tor}}
\newcommand{\R}{\mathcal{R}}
\renewcommand{\Re}{\mathbb{R}}
\newcommand{\C}{\mathbb{C}}
\newcommand{\Des}{\mathscr{D}}
\newcommand{\Char}{\mathrm{char}}
\newcommand{\ccl}{\mathscr{C}}
\newcommand{\D}{\mathrm{D}}
\newcommand{\Q}{\mathbb{Q}}
\newcommand{\sym}[1]{\mathfrak{S}_{#1}}
\renewcommand{\mod}{\mathrm{mod}\ }
\newcommand{\N}{\mathrm{N}}
\newcommand{\un}[1]{\underline{#1}}
\newcommand{\Fix}{\mathrm{Fix}}
\newcommand{\im}{\mathrm{im}\hspace{.08cm}}
\newcommand{\B}{\Omega}
\newcommand{\Ba}{\mathcal{B}}
\newcommand{\lcm}{\operatorname{\sf lcm}}
\newcommand{\Sp}{\mathrm{span}}
\newcommand{\A}{\mathcal{A}}
\newcommand{\Fa}{\mathcal{F}}
\newcommand{\supp}{\mathrm{supp}}
\newcommand{\Ed}{\mathcal{E}}
\renewcommand{\k}{k}
\renewcommand{\gcd}{\operatorname{\sf gcd}}
\renewcommand{\det}{\operatorname{\sf det}}
\newcommand{\KD}{{\hat \Des}}
\newcommand{\FD}{{\bar \Des}}
\newcommand{\OD}{{\Des}}
\newcommand{\Nil}{\mathscr{N}}
\newcommand{\ON}{{\Nil}}
\newcommand{\FN}{{\bar \Nil}}
\newcommand{\KN}{{\hat \Nil}}
\newcommand{\OB}{{\cO B}}
\newcommand{\KB}{{\K B}}
\newcommand{\FB}{{\F B}}
\newcommand{\OM}{{\cO M}}
\newcommand{\KM}{{\K M}}
\newcommand{\FM}{{\F M}}
\newcommand{\I}{{\mathsf{J}}}
\newcommand{\ifa}{0_\Fa}
\newcommand{\ied}{0_\Ed}
\newcommand{\U}{\mathsf{U}}
\newcommand{\T}{\mathsf{T}}
\newcommand{\J}{\mathscr{J}}
\newcommand{\Hec}{\mathscr{H}}
\newcommand{\Rleq}{\leq}
\newcommand{\cB}{{\mathcal{B}}}
\newcommand{\F}{{\mathbb{F}}}
\newcommand{\K}{\mathbb{K}}
\newcommand{\Z}{{\mathbb{Z}}}
\newcommand{\fm}{{\mathfrak{m}}}
\numberwithin{equation}{section}
\newtheorem{theorem}[equation]{Theorem}
\newtheorem{lemma}[equation]{Lemma}
\newtheorem{corollary}[equation]{Corollary}
\newtheorem{proposition}[equation]{Proposition}
\theoremstyle{definition}
\newtheorem{example}[equation]{Example}
\newtheorem{notation}[equation]{Notation}
\newtheorem{remark}[equation]{Remark}
\newtheorem{definition}[equation]{Definition}
\newtheorem{question}[equation]{Question}
\newcommand{\KJ}[1]{\textcolor{red}{#1}}
\newcommand{\KJQ}[1]{\textcolor{blue}{#1}}
\title[Projective Modules and Cohomology for Integral Basic Algebras]{Projective Modules and Cohomology for Integral Basic Algebras}
\author{David J. Benson}
\address[D. J. Benson]{Institute of Mathematics, University of Aberdeen, Aberdeen AB24 3UE,
United Kingdom.}
\email{d.j.benson@abdn.ac.uk}
\author{Kay Jin Lim}
\address[K. J. Lim]{Division of Mathematical Sciences, Nanyang Technological University, SPMS-04-01, 21 Nanyang Link, Singapore 637371.}
\email{limkj@ntu.edu.sg}
\begin{document}

\thanks{The second author would like to thank the Edinburgh Napier University for their support at the beginning for this project.}

\begin{abstract} Algebras defined over fields of characteristic zero and positive characteristic usually do not behave the same way. However, for certain algebras, for example the group algebras, they behave the same way as the characteristic zero case at ``good enough" prime. In this paper, we initiate the study of this topic by imposing increasingly strong hypotheses on basic algebras. When the algebras satisfy the right hypotheses, we have equalities of the dimensions of their cohomology groups between simple modules and  equalities of graded Cartan numbers. The examples include the Solomon descent algebras of finite Coxeter groups at large enough primes, nilCoxeter algebra, and certain finite semigroup algebras at an arbitrary prime.
\end{abstract}

\maketitle

\section{Introduction}\label{S:Intro}

In the representation theory of finite groups,
characteristic zero and ``good enough" prime characteristic
behave the same way. Namely in both cases, the
group algebra is semisimple. If the fields are algebraically
closed then the simple modules correspond and have
the same dimensions. Here, the characteristic is good
enough if it does not divide the group order.

One may wonder whether a similar situation holds for
classes of algebras that are not semisimple in
characteristic zero. A general setup might involve an
algebra defined over a suitable ring of integers, where
one can either pass to the field of fractions, or reduce
modulo a maximal ideal, and then ask
what it might mean for a maximal ideal to be good enough for the algebra.

There are various questions that arise here, and correspondingly
there are various notions of ``good enough".
For example, one might at least want the simple modules to
correspond, and have the same dimensions. And then,
how similar are the cohomology algebras? Of course, they
cannot be isomorphic, as they have different characteristics.
But at least we can compare their dimensions in each degree.
To refine this, we could examine the product structure and secondary
operations, define various constants in terms of these, and
compare them.
We can also compare the Loewy layers of their projective
indecomposable modules, and other more subtle invariants
of this sort.

Our purpose in this paper is to begin an investigation of this
topic by examining the case where, over the field of fractions,
we have a finite-dimensional basic algebra. We present a series of increasingly strong hypotheses on the algebra. It turns out that for the equality of dimensions
of cohomology groups between simple modules, a fairly weak hypothesis suffices.
The equality of multiplicities of simple modules in Loewy layers of
projective indecomposables requires a much stronger
hypothesis, and we give examples to show that the weaker
hypotheses are not sufficient for this.

The examples of basic algebras we examine in detailed are
the Solomon descent algebra of a finite Coxeter group \cite{Atkinson:1992a,
Atkinson/Pfeiffer/vanWilligenburg:2002a,
Atkinson/vanWilligenburg:1997a,
Bergeron/Bergeron/Howlett/Taylor:1992a,
Bishop/Pfeiffer:2013a,
Blessenohl/Laue:1996a,
Blessenohl/Laue:2002a,
Bonnafe/Pfeiffer:2008a,
Douglass/Pfeiffer/Roehrle:2014a,
Garsia/Reutenauer:1989a,
Lim:2023a,
Pfeiffer:2009a,
Saliola:2008a,
Schocker:2004a,
Solomon:1976a}, the nilCoxeter algebra \cite{Bernstein/Gelfand/Gelfand:1973a,Fomin/Stanley:1994a,Khovanov:2001a}, and certain
classes of finite semigroup algebras \cite{Berg/Bergeron/Bhargava/Saliola:2011a,Bidigare:1997a,Margolis/Saliola/Steinberg:2021a,Nijholt/Rink/Schwenker:2021a,Saliola:2009a,Schocker:2008a}. The descent algebra was originally constructed by Solomon as a non-commutative analogue of the Mackey's multiplication formula for permutation characters of the parabolic subgroups of the Coxeter group. But it can also be obtained as the opposite algebra of the fixed-point space of a certain face algebra of hyperplane arrangement in real space associated with the Coxeter group. The face semigroup algebra belongs to a broader class known as unital left regular band algebras, which, in turn, are part of the even larger class known as the $\mathcal{R}$-trivial monoid algebras. Notably, this largest class also includes the $0$-Hecke algebras.

We show that the strongest of our hypotheses holds for the descent algebra for sufficiently large primes and the nilCoxeter algebra for an arbitrary prime.
In the case of the semigroup
algebras that we investigate, we show that the
weaker hypotheses hold under suitable conditions,
but we leave open the question of when the stronger one does. While our results do not give an explicit description of the cohomology groups nor the multiplicities of simple modules in Loewy layers of
projective indecomposables for these classes of algebras, they offer a general explanation why these notions behave the same way in characteristic zero and at ``good enough" prime.

In preparation for this task, in the next section, we give a general discussion of various
hypotheses (Hypotheses \ref{hyp:Ext}--\ref{hyp:freeJ}) on a finitely generated algebra over a local principal
ideal domain, and what they imply. The main results are summarised in \Cref{th:main}. \Cref{se:quiver,S:Ext,S:ExtFree,se:Ofree} are devoted to the proof of the theorem. Then we apply the results to the
cases of the descent algebra of a finite Coxeter
group, the nilCoxeter algebra, and certain semigroup algebras and obtain \Cref{th:descent,T:nilCox,T:R-Trivial,T:RegularBand}. We begin the general discussion on the descent algebra in \Cref{S:Des} and extend the result of the second author regarding the idempotents of the descent algebra to all types and all primes in \Cref{S:Idem}. In \Cref{S: proj}, we show that the descent algebra satisfies our strongest of our hypotheses at large enough prime. The analysis is based on a certain characteristic free basis of the descent algebra and we further investigate that in \Cref{S: indep}. Another algebra satisfying our strongest hypothesis is the nilCoxeter algebra and that is discussed in \Cref{S:nilCox}. In \Cref{S:R-Trivial,S:LRegularBand}, we show that certain classes of semigroup algebras satisfy our weaker hypotheses.

\section{The hypotheses and main results}

Throughout the paper, we shall use the following notation.

\begin{notation}\label{not:A}\
\begin{enumerate}[(i)]
\item Let $\cO$ be a local principal ideal domain with maximal ideal $(\pi)$.
Write $\K$ for the field of fractions of $\cO$ and $\F$ for the residue
field $\cO/(\pi)$ of characteristic $p$.
\item Let $\AO$ be an $\cO$-torsion free $\cO$-algebra, and
let $\AF=\F\otimes_\cO \AO$, $\AK=\K\otimes_\cO A$.
\item Let $M$ and $N$ be $\cO$-free $\AO$-modules, and set
\[ \bar M=\F\otimes_\cO M,\
  \bar N=\F\otimes_\cO N,\
  \hat M=\K\otimes_\cO M,\
  \hat N=\K\otimes_\cO N. \]
\item We write $\Rad(R)$ for the Jacobson radical of a ring $R$, and we
write $\JA$ for $\AO\cap\Rad(\AK)$. By convention, $\JA^0=A$.
\item We denote a general field $\k$ and assume that it has characteristic $q$ (either $q=0$ or $q>0$).
\end{enumerate}
\end{notation}

\begin{hypothesis}\label{hyp:Ext}
  If $M$ and $N$ are
  finitely generated $\cO$-free $\AO$-modules with $\hat N$ simple
  then $\Ext^t_A(M,N)$ is $\cO$-free for all $t\ge  0$.
\end{hypothesis}

\begin{hypothesis}\label{hyp:rad}
Suppose that  the algebra $\AO$ has finite rank over $\cO$, we have
\[ \Rad(\AO)=\pi\AO+\JA, \]
and orthogonal idempotent decompositions of the identity in $\AF$ lift to $\AO$.
\end{hypothesis}


\begin{hypothesis}\label{hyp:freeJ2}
We have $\AK$ is basic and $A$, $A/\JA$ and $\JA/\JA^{\,2}$ are all $\cO$-free.
\end{hypothesis}

\begin{hypothesis}\label{hyp:basic}
Let $Q$ be a finite quiver, and let $\cO Q$ be its quiver
algebra. This is an $\cO$-free $\cO$-algebra with basis the directed
paths in $Q$. The paths of length zero are idempotent. The paths of
length at least one form a two-sided ideal $\JQ$ in $\cO Q$.
Let $I$ be a two-sided ideal of $\cO Q$ such that there exists
$n\ge 2$ with $\JQ^n\le I\le \JQ^2$, and with the property
that the quotient ring $\AO=\cO Q/I$ is $\cO$-free of finite rank.
\end{hypothesis}


\begin{notation}
If \Cref{hyp:basic} holds, we write
\begin{gather*}
\hat \JQ=\K\otimes_\cO \JQ,\qquad \bar\JQ=\F\otimes_\cO \JQ,\\
\hat I=\K\otimes_\cO I\le \K Q,\qquad\bar I=\F\otimes_\cO I \le \F Q,\\
\JA=\JQ/I\le\AO,\quad \hat \JA=
\K\otimes_\cO \JA,\quad \bar\JA=\F\otimes_\cO \JA.
\end{gather*}
We refer to the images  in $\AO$, $\AK$ and
$\AF$ of the paths of length zero in $\cO Q$ as the vertex
idempotents. Furthermore, there is a set $S_1,\dots,S_m$ of $\AO$-modules that are $\cO$-free of
rank one, corresponding to the vertex idempotents. For $1\le i\le m$,
we define $\hat S_i=\K\otimes_\cO S_i$ as an $\AK$-module, and
$\bar S_i=\F\otimes_\cO S_i$ as an $\AF$-module. Then
$\hat S_1,\dots,\hat S_m$ is a complete set of simple $\AK$-modules,
and $\bar S_1,\dots,\bar S_m$ is a complete set of simple
$\AF$-modules.  We write $P_i$ for the projective cover of $S_i$,
namely the image of the vertex idempotent on $\AO$.
Then $\hat P_i=\K\otimes_\cO P_i$ is the projective cover of $\hat
S_i$ over $\AK$, and $\bar P_i=\F\otimes_\cO P_i$ is the projective
cover of $\bar S_i$ over $\AF$.
\end{notation}

\begin{hypothesis}\label{hyp:freeJ}
\Cref{hyp:basic} holds, and for all $n\ge 1$, $\AO/\JA^n$
is $\cO$-free.
\end{hypothesis}

We investigate the implications of the above-mentioned hypotheses and study the Ext groups and radical layers of the projective indecomposable modules. In particular, we compare these under the change of field. The main result of this paper is given as follows.

\begin{theorem}\label{th:main}\
  \begin{enumerate}[\rm (i)]
 \item We have \Cref{hyp:freeJ} $\Rightarrow$
  \Cref{hyp:basic} $\Leftrightarrow$
  \Cref{hyp:freeJ2} $\Rightarrow$
  \Cref{hyp:rad} $\Rightarrow$
  \Cref{hyp:Ext}.
\item Let $t\ge  0$ and $M,N$ be finitely generated $\cO$-free $A$-modules with $\hat N$ simple. If \Cref{hyp:Ext} holds for $A$ (so that $\Ext^t_A(M,N)$ is $\cO$-free), then we have
 \begin{align*}
\F\otimes_\cO\Ext^t_\AO(M,N) &\cong \Ext^t_\AF(\bar M,\bar N), \\
\K \otimes_\cO \Ext^t_\AO(M,N) &\cong \Ext^t_\AK(\hat M,\hat N).
\end{align*} Furthermore, $\dim_\F \Ext^t_\AF(\bar M,\bar
N)=\dim_\K\Ext^t_\AK(\hat M,\hat N)$.
\item \Cref{hyp:freeJ} is equivalent to $\JA^n/\JA^{n+1}$ are $\cO$-free for all $n\ge  0$. In this case, the radical layer multiplicities of $\hat P_i$ and $\bar P_i$ are equal, i.e., for all $1\le i,j\le m$ and $n\ge  0$, \[ [\Rad^n(\hat P_i)/\Rad^{n+1}(\hat P_i):\hat S_j]=
[\Rad^n(\bar P_i)/\Rad^{n+1}(\bar P_i):\bar S_j].\]
\end{enumerate}
\end{theorem}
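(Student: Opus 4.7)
Part (i) proceeds by verifying each implication. \Cref{hyp:freeJ}$\Rightarrow$\Cref{hyp:basic} is immediate from how \Cref{hyp:freeJ} is phrased. For \Cref{hyp:basic}$\Leftrightarrow$\Cref{hyp:freeJ2}: in the forward direction, identify $\JA=\JQ/I$, observe that $\AO/\JA$ is the $\cO$-span of the vertex idempotents, $\JA/\JA^{\,2}$ is a quotient of the free $\cO$-module $\JQ/\JQ^{\,2}$ on the arrows of $Q$, and $\hat\JA=\Rad(\AK)$ forces $\AK$ to be basic; conversely, lift a primitive orthogonal idempotent decomposition of $\AF/\bar\JA$ to $\AO$, use the lifts as vertices of a quiver $Q$ with arrows given by an $\cO$-basis of $\JA/\JA^{\,2}$, and present $\AO$ as $\cO Q/I$ via the induced surjection. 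For \Cref{hyp:freeJ2}$\Rightarrow$\Cref{hyp:rad}, finite rank is immediate from $\AK$ being basic; $\Rad(\AO)=\pi\AO+\JA$ follows because $\AO/(\pi\AO+\JA)\cong\AF/\bar\JA$ is a product of copies of $\F$, hence semisimple with trivial radical; and idempotent lifting uses nilpotence of $\JA$ (which holds because $\hat\JA=\Rad(\AK)$ is nilpotent and $\AO$ is $\cO$-free) together with a Hensel-type iteration.

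The substantive implication is \Cref{hyp:rad}$\Rightarrow$\Cref{hyp:Ext}. The approach is to construct a projective resolution $P_\bullet\to M$ by finitely generated $\cO$-free projective $\AO$-modules, whose existence rests on idempotent lifting and finite rank together with $\cO$-freeness of $M$. Each $\Hom_\AO(P_t,N)$ is then $\cO$-free, so $\Ext^t_\AO(M,N)$ is a finitely generated $\cO$-module, and the remaining content is to rule out $\pi$-torsion when $\hat N$ is simple. This is the step I expect to be most delicate: the strategy should be to show that any apparent torsion cocycle, after multiplying by a power of $\pi$, becomes a coboundary in a way controlled by the simplicity of $\hat N$ and the structural information in \Cref{hyp:rad} about the shape of the radical.

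For part (ii), with $P_\bullet\to M$ as above, base change along $-\otimes_\cO\F$ and $-\otimes_\cO\K$ preserves projectivity and also exactness (the syzygies are $\cO$-free, so $\Tor_1^\cO$ vanishes), giving projective resolutions $\bar P_\bullet\to\bar M$ and $\hat P_\bullet\to\hat M$. The $\K$ isomorphism is automatic from $\cO$-flatness of $\K$. For the $\F$ isomorphism, the universal coefficient sequence for the $\cO$-free cochain complex $\Hom_\AO(P_\bullet,N)$ reads
\[0\to\F\otimes_\cO\Ext^t_\AO(M,N)\to\Ext^t_\AF(\bar M,\bar N)\to\Tor_1^\cO(\Ext^{t+1}_\AO(M,N),\F)\to 0,\]
and the $\Tor_1$ term vanishes by applying \Cref{hyp:Ext} at degree $t+1$. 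The dimension equality follows since both sides equal the $\cO$-rank of $\Ext^t_\AO(M,N)$.

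For part (iii), the equivalence is a routine induction on $n$ using $0\to\JA^n/\JA^{n+1}\to\AO/\JA^{n+1}\to\AO/\JA^n\to 0$: $\cO$-freeness of the quotients forces each layer to be torsion-free, hence $\cO$-free over the PID $\cO$, and conversely layer-freeness inductively yields quotient-freeness. For the radical multiplicities, write $P_i=\AO e_i$ so that $\Rad^n(P_i)=\JA^n e_i$ (and similarly after base change to $\K$ or $\F$). Basicness provides the identification $[L:S_j]=\dim e_j L$ for any semisimple $L$, so the multiplicity equals the dimension of $e_j\JA^n e_i/e_j\JA^{n+1}e_i$, which is an $\cO$-free Peirce summand of the $\cO$-free module $\JA^n/\JA^{n+1}$. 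Its $\cO$-rank therefore equals both its $\K$-dimension and its $\F$-dimension, yielding the claimed equality.
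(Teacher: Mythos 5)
Your overall skeleton tracks the paper's, and in places you take genuinely different but valid routes.  For part~(ii) your universal-coefficient argument for the $\F$-isomorphism,
\[ 0\to\F\otimes_\cO\Ext^t_\AO(M,N)\to\Ext^t_\AF(\bar M,\bar N)\to\Tor_1^\cO(\Ext^{t+1}_\AO(M,N),\F)\to 0, \]
with the $\Tor_1$ term killed by \Cref{hyp:Ext} in degree $t+1$, is a correct alternative to the paper's two-step argument (a Cartan--Eilenberg change-of-rings reduction to $\Ext^t_\AO(M,\bar N)$ followed by the long exact sequence from $0\to N\xrightarrow{\pi}N\to\bar N\to 0$).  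Both require finitely generated $\cO$-free projective resolutions; the paper's version has the minor advantage of only needing torsion-freeness in degree $t$, not $t+1$, and it is stated so as to also cover the non-finitely-generated setting used later.  For part~(iii) your Peirce-corner argument (each $e_j(\JA^n/\JA^{n+1})e_i$ is a direct summand of the $\cO$-free module $\JA^n/\JA^{n+1}$, hence $\cO$-free, so its $\cO$-rank computes both multiplicities) is a tidy shortcut; the paper instead fixes an $\cO$-basis of $\JA^n/\JA^{n+1}$, deduces $\hat s_{ij}\ge\bar s_{ij}$ from integrality of $\hat e_j\hat b\hat e_i$, and forces equality by summing over $i,j$.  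Both are correct, and yours is arguably cleaner.

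The genuine gap is in the implication \Cref{hyp:rad} $\Rightarrow$ \Cref{hyp:Ext}, which you flag as the most delicate step but then do not actually carry out.  Your proposed strategy --- tracking a torsion cocycle and multiplying by powers of $\pi$ to make it a coboundary --- is not how the claim goes, and you never explain where simplicity of $\hat N$ enters, which is essential (the paper's example $\AO=\cO[X]/(X^2)$, $M=\AO/(X)$, $N=(\pi,X)$ gives $\Ext^t_\AO(M,N)\cong\F$ for $t>0$).  The paper's key observation is that under \Cref{hyp:rad} the rings $\AO$, $\AK$, $\AF$ are semiperfect, so $M$ has a \emph{minimal} projective resolution $P_\bullet\to M$, and (\Cref{le:cover}, \Cref{le:min}) this resolution remains \emph{minimal} after tensoring with $\K$.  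Minimality together with simplicity of $\hat N$ forces the differential of $\Hom_\AK(\hat P_\bullet,\hat N)$ to vanish identically; since $\Hom_\AO(P_\bullet,N)$ is an $\cO$-lattice inside $\Hom_\AK(\hat P_\bullet,\hat N)=\K\otimes_\cO\Hom_\AO(P_\bullet,N)$, its differential vanishes as well, so $\Ext^t_\AO(M,N)=\Hom_\AO(P_t,N)$ is $\cO$-free.  Without this minimality input your argument does not close.

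A smaller caveat in part~(i): for idempotent lifting in \Cref{hyp:rad} you invoke ``nilpotence of $\JA$ together with a Hensel-type iteration,'' but the lift required is modulo $\Rad(\AO)=\pi\AO+\JA$, and $\pi\AO$ is not nilpotent (nor is $\cO$ assumed complete), so the usual nilpotent-ideal or Hensel argument does not apply as stated.  The paper sidesteps this by using the quiver presentation: the vertex idempotents give an explicit orthogonal decomposition of $1$ in $\AO$, and any orthogonal idempotent decomposition in $\AF$ is conjugate to the reduction of this one, hence lifts (\Cref{th:basic}(iii)).
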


The implication \Cref{hyp:freeJ} $\Rightarrow$
\Cref{hyp:basic} is clear.
\Cref{hyp:basic} $\Rightarrow$
\Cref{hyp:rad} is proved in \Cref{co:basic},
\Cref{hyp:basic} $\Leftrightarrow$ \Cref{hyp:freeJ2} is proved in
\Cref{th:freeJ2<=>basic},
and \Cref{hyp:rad} $\Rightarrow$ \Cref{hyp:Ext}
is proved in Theorem~\ref{th:hyp=>free}. The consequence of
\Cref{hyp:Ext} is proved in  \Cref{th:KOF}, and the
consequence of \Cref{hyp:freeJ} is proved in Theorem~\ref{th:freeJ}.

\section{Quivers and relations over $\cO$}\label{se:quiver}

\begin{theorem}\label{th:basic}
Suppose that $\AO$, $\AK$ and $\AF$
satisfy \Cref{hyp:basic}.
\begin{enumerate}[\rm (i)]
\item We have $\hat \JA=\Rad(\hat A)$ and $\JA=A\cap\hat \JA$.
\item $\JA$ is a nilpotent ideal in $\AO$, and $\AO/\JA$ is a direct product
of copies of $\cO$ as an algebra, spanned by the vertex idempotents,
which are primitive.
\item Idempotents in $\AF$ lift to idempotents in $\AO$.
\item We have $\dim_\K\AK=\dim_\F\AF$.
\item The ideal $\hat I\le\AK$ satisfies $\hat
  J_Q^n\le\hat I\le \hat J_Q^2$. The ideal $\hat \JA$ is the radical of $\AK$, and
$\AK/\hat \JA$ is a direct product of copies of $\K$ as an algebra,
spanned by the vertex idempotents, which are primitive.
\item The ideal $\bar I\le\AF$ satisfies $\bar J_Q^n\le
\bar I\le \bar J_Q^2$. The ideal $\bar \JA$ is the radical of $\AF$, and
$\AF/\bar \JA$ is a direct product of copies of $\F$ as an algebra,
spanned by the vertex idempotents, which are primitive.
\item We have $\Rad(A)=\pi A+\JA=\pi A + (A\cap\Rad(\AK))$.
\item Both $A/\JA$ and $\JA/\JA^2$ are $\cO$-free.
\end{enumerate}
\end{theorem}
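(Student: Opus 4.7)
The plan is to extract as much as possible directly from the quiver decomposition $\cO Q = \cO Q_0 \oplus \JQ$ combined with the sandwich $\JQ^n \le I \le \JQ^2$, then identify the Jacobson radicals on all three sides, and finally handle the one subtle point: lifting idempotents from $\AF$ to $\AO$ without a completeness hypothesis on $\cO$.

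Parts (ii), (iv)--(vi), and (viii) should fall out of the quiver structure. The inclusion $I \le \JQ$ yields an $\cO$-module decomposition $\AO = \cO Q_0 \oplus \JA$, and $\JQ^n \le I$ gives $\JA^n = (\JQ^n + I)/I = 0$, so $\JA$ is nilpotent. The quotient $\AO/\JA = \cO Q/\JQ \cong \prod_{i=1}^m \cO$ displays the vertex idempotents as the canonical orthogonal primitive idempotents of a product of local rings; they remain primitive in $\AO$ since idempotents lift uniquely modulo the nilpotent $\JA$. This is (ii). Using $I \le \JQ^2$ one has $\JA^2 = \JQ^2/I$, hence $\JA/\JA^2 \cong \JQ/\JQ^2$ is $\cO$-free on the arrows of $Q$ and $\AO/\JA \cong \prod \cO$ is plainly $\cO$-free, giving (viii). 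Tensoring the chain $\JQ^n \le I \le \JQ^2$ with the flat $\cO$-modules $\K$ and $\F$ preserves all inclusions and the direct-sum decomposition, so the same arguments yield (v) and (vi): $\hat \JA$ and $\bar \JA$ are nilpotent with semisimple quotients $\prod \K$ and $\prod \F$ respectively. Part (iv) is immediate from $\dim_\K \AK = \rank_\cO \AO = \dim_\F \AF$.

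For parts (i) and (vii): (v) already shows $\hat \JA$ is nilpotent with semisimple quotient, so $\hat\JA = \Rad(\AK)$. The identity $\JA = \AO \cap \hat\JA$ is read off the decomposition $\AK = \K Q_0 \oplus \hat \JA$ obtained by base change from $\AO = \cO Q_0 \oplus \JA$, using $\cO$-freeness of $\cO Q_0$. For (vii) I would first establish $\pi\AO \le \Rad(\AO)$ by Nakayama: any simple left $\AO$-module $S$ is a cyclic quotient of $\AO$, hence finitely generated over $\cO$, so $\pi S \subsetneq S$, and simplicity forces $\pi S = 0$. Combined with the nilpotency of $\JA$, this gives $\pi\AO + \JA \le \Rad(\AO)$; the reverse inclusion holds because $\AO/(\pi\AO + \JA) \cong \AF/\bar\JA \cong \prod \F$ is semisimple.

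The hard part is (iii): since $\cO$ is not assumed $\pi$-adically complete, Henselian idempotent lifting is unavailable. My workaround is to first match the residue data to an explicit combinatorial idempotent and then conjugate. Given $\bar f \in \AF$ idempotent, its image in $\AF/\bar\JA \cong \prod_{i=1}^m \F$ is a $0/1$-vector supported on some set $S \subseteq \{1,\dots,m\}$, so $e_S := \sum_{i \in S} e_i \in \AO$ is an idempotent whose reduction $\bar e_S$ has the same image in $\AF/\bar\JA$ as $\bar f$. Hence $\bar f - \bar e_S \in \bar\JA \le \Rad(\AF)$, and by the standard fact that two idempotents differing by an element of the Jacobson radical are conjugate by a unit, there is $\bar u \in \AF^\times$ with $\bar u\, \bar e_S\, \bar u^{-1} = \bar f$. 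Any lift $u \in \AO$ of $\bar u$ is a unit: if $\bar v \in \AF$ is an inverse of $\bar u$ with lift $v \in \AO$, then $uv - 1, vu - 1 \in \pi\AO \le \Rad(\AO)$ by (vii), so $uv$ and $vu$ are units in $\AO$, forcing $u$ itself to be a two-sided unit. Then $f := u e_S u^{-1} \in \AO$ is the desired idempotent lift of $\bar f$.
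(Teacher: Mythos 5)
Your proposal is correct and follows essentially the same plan as the paper: decompose $\AO = \cO Q_0 \oplus \JA$ from the sandwich $\JQ^n \le I \le \JQ^2$, deduce nilpotency and the semisimple quotient, base-change to $\K$ and $\F$, and lift idempotents by conjugation. Two points of genuine difference are worth noting. For (vii), you use Nakayama on a simple module (finitely generated over $\cO$, so $\pi$ must kill it), whereas the paper argues via a maximal left ideal $\fm$, showing $\fm \cap \cO \le (\pi)$ and hence $(\pi) \le \fm$; both are standard and correct. For (iii), the paper's proof is a one-line assertion (``every idempotent in $\AF$ is conjugate to a sum of vertex idempotents, and therefore lifts''), and you have rightly identified that the ``therefore'' is hiding real content: one must lift the conjugating unit $\bar u$, and the lift is a unit only because $\pi\AO \le \Rad(\AO)$, which is part (vii). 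You establish (vii) independently of (iii) (as does the paper), so your reordering and the explicit unit-lifting argument close a gap that a careful reader would want filled; the rest of your (iii) --- reading off the $0/1$ image in $\prod \F$, taking $e_S$, and conjugating --- is exactly the content of the paper's claim, just spelled out. In short: same route, with a welcome expansion of (iii).
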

\begin{proof}
(i) Let $\AO_0$ be the $\cO$-subalgebra of $\AO$ spanned by the
vertex idempotents, and similarly $\AK_0$ in $\AK$. Then as an
$\cO$-module we have $\AO=\AO_0\oplus \JA$, $\AK=\AK_0\oplus\hat \JA$.
So $\JA=A\cap\hat \JA$.

(ii) Since $J_Q^n\le I$, we have $\JA^n=0$. The quotient $\AO/\JA$ is isomorphic to $\cO
Q/J_Q$, which is a direct product of copies of $\cO$ spanned by the
vertex idempotents, which are primitive.

(iii) Every idempotent in $\AF$ is conjugate to a sum of vertex
idempotents, and therefore lifts to $\AO$.

(iv) This follows from the fact that $\AO$ is $\cO$-free.

(v) Using part (ii), we see that $\hat \JA$ is nilpotent, and $\AK/\hat
\JA=\K\otimes_\cO (\AO/\JA)$ is
isomorphic to $\K Q/\hat J_Q$, which is a direct product of copies of
$\K$, so $\hat \JA=\Rad(\AK)$.

(vi) Again using part (ii), we see that $\bar \JA$ is nilpotent, and
$\AF/\bar\JA=\F\otimes_\cO(\AO/\JA)$ is isomorphic to $\F Q/\bar J_Q$,
which is a direct product of copies of $\F$, so $\bar \JA=\Rad(\AF)$.

(vii) If $\fm$ is a maximal left ideal of $A$ then since $1\not\in\fm$
we have $\fm\cap\cO\le(\pi)$, where $\cO$ is regarded as embedded in
$\AO$ as multiples of the identity. Letting $\bar\fm$ be the image of $\fm$
in $\AF$, we therefore have $\bar\fm\cap\F=0$. Thus $\fm+(\pi)$ is a
proper left ideal in $A$, and so by maximality $(\pi)\le\fm$. It
follows that $(\pi)\le\Rad(A)$. Since $\JA$ is nilpotent, we also have
$\JA\le\Rad(A)$. Finally, $\AO/(\JA+(\pi))\cong\AF/\bar \JA$ is semisimple
by part (vi), and so $\Rad(A)=\JA+(\pi)$.

(viii) Since $I\le \JQ^2$, we have $\JA=\JQ/I$ and $A/\JA\cong (\cO Q/I)/(\JQ/I)\cong \cO Q/\JQ$ and $\JA/\JA^2\cong\JQ/\JQ^2$. Therefore, both $A/\JA$ and $\JA/\JA^2$ are $\cO$-free.
\end{proof}

\begin{corollary}\label{co:basic}
\Cref{hyp:basic} implies \Cref{hyp:rad}.
\end{corollary}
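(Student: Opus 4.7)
The plan is to extract all three conditions of \Cref{hyp:rad} directly from what has already been established in \Cref{th:basic}. The finite rank of $A$ over $\cO$ is built into \Cref{hyp:basic} itself via the presentation $A=\cO Q/I$, and the equality $\Rad(A)=\pi A+\JA$ is exactly \Cref{th:basic}(vii). So the only substantive point is the lifting of orthogonal idempotent decompositions of $1\in\AF$ to $A$.

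For this I would argue globally rather than inductively. Given an orthogonal decomposition $1=\bar e_1+\dots+\bar e_n$ in $\AF$, refine each $\bar e_i$ into a sum of primitive orthogonal idempotents inside $\AF$ (possible since $\AF$ is finite-dimensional), obtaining a complete set $\{\bar g_{i,l}\}$ of orthogonal primitive idempotents summing to $1$. By \Cref{th:basic}(vi), $\AF/\bar\JA$ is a product of copies of $\F$, so $\AF$ is basic. In a basic algebra, any two complete systems of orthogonal primitive idempotents summing to $1$ are conjugate by a single unit: decomposing $\AF$ as a left module over itself according to each system and matching indecomposable summands via Krull--Schmidt produces an automorphism of $\AF$ as a left $\AF$-module, which is right multiplication by some unit $\bar u\in\AF^\times$. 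Thus there is a bijection $\sigma$ with $\bar g_{i,l}=\bar u\,\bar f_{\sigma(i,l)}\bar u^{-1}$, where $\bar f_1,\dots,\bar f_m$ are the vertex idempotents of $\AF$.

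Since $\pi A\subseteq\Rad(A)$ by \Cref{th:basic}(vii), the kernel $\pi A$ of the surjection $A\to\AF$ lies in the Jacobson radical of $A$, and units therefore lift; choose $u\in A^\times$ with image $\bar u$. Let $f_1,\dots,f_m\in A$ be the vertex idempotents of $A$, and set $g_{i,l}:=u f_{\sigma(i,l)} u^{-1}$ together with $e_i:=\sum_l g_{i,l}$. These are orthogonal idempotents in $A$ summing to $1$ that reduce modulo $\pi$ to the original $\bar e_i$, as required. The technical heart of the argument is the Krull--Schmidt step producing a single unit $\bar u$ that conjugates the entire primitive system at once; everything else is either bookkeeping or an immediate quotation of \Cref{th:basic}.
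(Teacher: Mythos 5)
Your proof is correct, and it in fact supplies detail that the paper's one-line proof leaves implicit. The paper cites Theorem~\ref{th:basic}(iii), whose proof reads ``every idempotent in $\AF$ is conjugate to a sum of vertex idempotents, and therefore lifts,'' which on its face only lifts a \emph{single} idempotent; \Cref{hyp:rad} asks for the lifting of an entire orthogonal decomposition of $1$. Your argument closes this gap exactly as one should: refine to a complete primitive orthogonal system, use Krull--Schmidt to produce a \emph{single} conjugating unit $\bar u$ matching it to the vertex idempotents (this is the standard semiperfect-ring fact, and your sketch of it is correct --- from the module automorphism one gets $u f_j = e_j u$ for all $j$ simultaneously), lift $\bar u$ to $u \in A^\times$ using $\pi A \subseteq \Rad(A)$, and conjugate-and-regroup. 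The reductions modulo $\pi$, orthogonality, and the sum-to-$1$ condition all follow immediately. So the approach is the same conjugation idea as the paper's, but your version is the complete form of it; the paper's corollary is really relying on this fuller argument (or the equivalent inductive lifting in corner rings) without spelling it out.
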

\begin{proof}
This follows from parts (iii) and (vii) of Theorem~\ref{th:basic}.
\end{proof}


\begin{example}
Here are a couple of examples that do not satisfy
\Cref{hyp:basic}. The first is $\AO=\cO[x]/(x^2-x^3)$. In this
case, $Q$ has just one vertex, it has one loop corresponding to $x$, but $I=(x^2-x^3)$
contains no power of $J_Q=(x)\le\cO Q$. So $\JA$ is not a nilpotent
ideal in $\AO$.
The second example is $\AO=\cO[x]/(\pi x^2, x^3)$. Then $x^2\ne 0$ but
$\pi x^2=0$ in $\AO$, so $\AO$ is not $\cO$-free. In this example we have
$\dim_\K\AK=2$, $\dim_\F\AF=3$, so part (v) of Theorem~\ref{th:basic}
does not hold.
\end{example}

\begin{theorem}\label{th:freeJ2<=>basic}
\Cref{hyp:freeJ2} is equivalent to \Cref{hyp:basic}
\end{theorem}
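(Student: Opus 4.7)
The direction \Cref{hyp:basic}$\Rightarrow$\Cref{hyp:freeJ2} is almost immediate from \Cref{th:basic}: part (v) shows that $\AK/\hat\JA$ is a direct product of copies of $\K$, which is precisely the basicness of $\AK$; the $\cO$-freeness of $\AO$ is built into \Cref{hyp:basic}; and part (viii) supplies the $\cO$-freeness of $A/\JA$ and $\JA/\JA^2$. So the real task is the reverse implication, and my plan is a Gabriel-style quiver reconstruction of $A$ over $\cO$. The preliminaries I would set up are: since $\AK$ is finite-dimensional basic, $\Rad(\AK)$ is nilpotent; the identity $\K\otimes_\cO\JA=\Rad(\AK)$ follows from $\cO$-torsion-freeness of $A$, so $\JA$ is nilpotent in $A$; and $\AK/\Rad(\AK)\cong\K^m$ for some $m$. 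The first real step is to identify $A/\JA$ with the direct product of $m$ copies of $\cO$ as an $\cO$-algebra, using that $A/\JA$ is $\cO$-free and $\K\otimes_\cO(A/\JA)\cong\K^m$. Once this identification is in hand, a standard lifting along the nilpotent ideal $\JA$ produces orthogonal primitive idempotents $e_1,\dots,e_m\in A$ with $\sum_i e_i=1$.

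Next I would decompose the $\cO$-free module $\JA/\JA^2=\bigoplus_{i,j}e_j(\JA/\JA^2)e_i$, pick an $\cO$-basis for each summand, and lift each basis element to an element of $e_j\JA e_i\subseteq A$. Let $Q$ be the finite quiver with vertices $1,\dots,m$ and with the chosen basis elements as arrows from $i$ to $j$. Sending the vertex idempotents of $\cO Q$ to the $e_i$ and each arrow to its chosen lift defines an $\cO$-algebra map $\phi\colon\cO Q\to A$. Surjectivity follows from a Nakayama-type argument: $\phi$ is surjective modulo $\JA$ because the $e_i$ span $A/\JA$, the arrow images surject onto $\JA/\JA^2$ by construction, and $\phi(\JQ^n)\subseteq\JA^n$ with $\JA^n=0$ eventually, from which $\phi(\cO Q)=A$ follows by induction on the radical filtration.

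Finally, set $I=\ker\phi$. The induced map $\JQ/\JQ^2\to\JA/\JA^2$ is a surjection of $\cO$-free modules of the same finite rank, hence an isomorphism, so $I\subseteq\JQ^2$. Since $\JA^n=0$ for some $n\ge 2$, we also have $\JQ^n\subseteq I$. The quotient $A=\cO Q/I$ is $\cO$-free of finite rank by hypothesis, so \Cref{hyp:basic} holds. The main obstacle, and the place where the $\cO$-freeness hypotheses in \Cref{hyp:freeJ2} actually do the crucial work, is the identification of $A/\JA$ with a product of copies of $\cO$: one must pass from mere $\cO$-freeness of $A/\JA$ to a concrete idempotent decomposition matching the primitive idempotents of $\AK/\Rad(\AK)$. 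Once that identification is secured, the idempotent lifting along the nilpotent $\JA$ and the remaining quiver combinatorics are routine.
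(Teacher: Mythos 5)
Your argument runs the same Gabriel-style quiver reconstruction that the paper uses, so the overall shape matches: lift idempotents to $\AO$, choose a basis of $\JA/\JA^2$ compatible with the idempotent decomposition, let $Q$ be the Ext quiver of $\AK$, define $\cO Q\to \AO$ sending vertices to the lifted idempotents and arrows to lifted basis elements, and check that the kernel $I$ satisfies $\JQ^n\le I\le\JQ^2$. The differences from the paper's proof are cosmetic (you index arrows by the Peirce decomposition directly, the paper packages the bases as $B_0\cup B_1\cup B_2$), and your surjectivity and kernel arguments are the same ones.

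The problem is the step you yourself single out as "the main obstacle": the identification of $A/\JA$ with $\cO^m$, equivalently the claim that $A/\JA$ has an $\cO$-basis of orthogonal idempotents matching the primitive idempotents of $\AK/\Rad(\AK)$. You flag it as crucial but never justify it, and in fact it does \emph{not} follow from \Cref{hyp:freeJ2}. Take $\AO=\cO[u]/(u^2-\pi u)$, which is $\cO$-free of rank $2$. Then $\AK\cong\K[u]/(u(u-\pi))\cong\K\times\K$ is split semisimple, hence basic, and $\Rad(\AK)=0$, so $\JA=\AO\cap\Rad(\AK)=0$. Thus $A$, $A/\JA=A$, and $\JA/\JA^{\,2}=0$ are all $\cO$-free and \Cref{hyp:freeJ2} holds. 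But $\AF\cong\F[u]/(u^2)$ is local and $\pi\AO\subseteq\Rad(\AO)$, so $\AO$ is a local ring with only trivial idempotents; in particular $A/\JA=\AO\not\cong\cO^2$. If \Cref{hyp:basic} held for this $\AO$, then \Cref{th:basic}(ii) would force $A/\JA$ to be a product of copies of $\cO$ and \Cref{th:basic}(vi) would force $\bar\JA=\Rad(\AF)$, and both fail here (the latter because $\bar\JA=0$ while $\Rad(\AF)=(u)\ne 0$). So as written the implication \Cref{hyp:freeJ2} $\Rightarrow$ \Cref{hyp:basic} does not hold, and your proof cannot be completed without an additional assumption.

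For what it is worth, the paper's own proof contains exactly the same leap: it asserts that one may choose "a lift of a basis for $A/\JA$ consisting of idempotents," which presupposes that $A/\JA$ has such a basis. To repair the statement one needs to strengthen \Cref{hyp:freeJ2}, for instance by adding that a complete set of primitive orthogonal idempotents of $\AK$ lies in $\AO$ (equivalently, that the embedding $A/\JA\hookrightarrow\AK/\Rad(\AK)\cong\K^m$ identifies $A/\JA$ with the standard lattice $\cO^m$), or that $\bar\JA=\Rad(\AF)$. With such an assumption in place the rest of your reconstruction -- the Peirce decomposition of $\JA/\JA^2$, the quiver map, and the kernel estimates -- goes through as you describe.
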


\begin{proof} By \Cref{th:basic}\,(viii), \Cref{hyp:basic} implies
\Cref{hyp:freeJ2}. Conversely, suppose that \Cref{hyp:freeJ2}
holds. Since $\cO$ is a PID and $A$ is $\cO$-free, both $\JA$ and $\JA^2$ are $\cO$-free. Let $B_2$ be an $\cO$-basis for $\JA^2$. Given that $\JA/\JA^2$ is $\cO$-free, the map $\JA\to \JA/\JA^2$ splits. Let $B_1$ be a lift of a basis for $\JA/\JA^2$ so that $B_1\cup B_2$ is a basis for $\JA$. Similarly, together with the idempotent lifting, let $B_0$ be a lift of a basis for $A/\JA$ consisting of idempotents. So $B=B_0\cup B_1\cup B_2$ is an $\cO$-basis for $\AO$. As such, $\hat B=\{1\otimes b:b\in B\}$ forms a basis for $\AK$ and $\{1\otimes b:b\in B_0\}$ is a complete set of primitive orthogonal idempotents for $\AK$. Let $Q$ be the Ext quiver of the basic algebra $\AK$ where the vertices are labelled by $B_0$. Define the algebra homomorphism $\psi:\cO Q\to A$ by $\psi(e_b)=1\otimes b$ for each $b\in B_0$ and, for $b,b'\in B_0$, if $(1\otimes b')(\JA/\JA^2)(1\otimes b)$ has a basis $\{1\otimes (b'xb):x\in S\}$ for some subset $S\subseteq B_1$, mapping the arrows from $b$ to $b'$ bijectively onto $S$. In particular, $\psi(\JQ)\subseteq \JA$. By the construction, $\psi$ is surjective with the kernel $I$ such that $I\le \JQ^2$. Suppose that $\Rad^n(\AK)=0$. We have $\psi(\JQ^n)\subseteq \JA^n=0$, i.e., $\JQ^n\le I$. Thus $\AK$ satisfies \Cref{hyp:basic}.
\end{proof}

\section{$\cO$-freeness of $\Ext$}\label{S:Ext}

In this section, we investigate algebras satisfying \Cref{hyp:rad}. In particular, we prove that they satisfy \Cref{hyp:Ext}.


\begin{lemma}
If $\AO$ satisfies \Cref{hyp:rad}, then $\AO$, $\AK$ and $\AF$ are
semiperfect. So finitely generated projective modules over these rings satisfy
the Krull--Schmidt theorem, and furthermore, all finitely generated modules
have minimal projective resolutions, unique up
to non-unique isomorphism.
\end{lemma}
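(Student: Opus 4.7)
The plan is to show that each of $\AO$, $\AK$, $\AF$ is semiperfect, from which the Krull--Schmidt property of finitely generated projectives and the existence and uniqueness of minimal projective resolutions are classical consequences (see for example Anderson--Fuller, \S 27). Recall that a ring $R$ is semiperfect precisely when $R/\Rad(R)$ is semisimple and idempotents lift modulo $\Rad(R)$; equivalently, $1$ admits a decomposition into pairwise orthogonal primitive idempotents each of whose corner rings is local.

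First I would dispose of $\AK$ and $\AF$: because $\AO$ has finite $\cO$-rank by \Cref{hyp:rad}, both $\AK$ and $\AF$ are finite-dimensional algebras over a field, hence artinian, and any artinian ring is semiperfect (Artin--Wedderburn for the semisimple quotient, together with the standard lifting of idempotents modulo a nilpotent ideal). Next I would treat $\AO$ itself. The identity $\Rad(\AO) = \pi A + \JA$ from \Cref{hyp:rad} yields $\AO/\Rad(\AO) \cong \AF/\bar\JA$, where $\bar\JA$ denotes the image of $\JA$ in $\AF$. Since $\AK$ is artinian, $\Rad(\AK)$ is nilpotent, and from $\JA \subseteq A \cap \Rad(\AK)$ it follows that $\JA$, and hence $\bar\JA$, is nilpotent. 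Thus $\bar\JA \subseteq \Rad(\AF)$; combined with the observation that $\AO/\Rad(\AO) = \AF/\bar\JA$ has zero Jacobson radical, this forces $\bar\JA = \Rad(\AF)$ and exhibits $\AO/\Rad(\AO) = \AF/\Rad(\AF)$ as semisimple. For idempotent lifting, I would start with a complete primitive orthogonal decomposition of $1$ in $\AF/\Rad(\AF)$, lift it first to $\AF$ (possible because $\Rad(\AF)$ is nilpotent) and then to $\AO$ by the final clause of \Cref{hyp:rad}; an arbitrary idempotent is then a summand of such a lift.

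With semiperfectness of all three rings in hand, the remaining assertions are routine. Azumaya's theorem yields Krull--Schmidt for finitely generated projectives, since indecomposable projectives over a semiperfect ring have local endomorphism rings. Every finitely generated module admits a projective cover; iterating this on successive kernels produces a minimal projective resolution, and these kernels remain finitely generated because $\AO$ is left noetherian (finitely generated as a module over the noetherian ring $\cO$), while $\AK$ and $\AF$ are noetherian for trivial reasons. Uniqueness up to non-unique isomorphism follows from uniqueness of projective covers. The only step where \Cref{hyp:rad} does real work, and the main thing to verify carefully, is the identification $\bar\JA = \Rad(\AF)$ underlying semisimplicity of $\AO/\Rad(\AO)$ and the two-step lifting through $\AO/\Rad(\AO) \leftarrow \AF \leftarrow \AO$; the rest is black-box theory of semiperfect rings.
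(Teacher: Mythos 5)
Your proposal is correct and takes essentially the same route as the paper: show $\AO/\Rad(\AO)$ is a finite-dimensional semisimple $\F$-algebra using $\pi\AO\subseteq\Rad(\AO)$, lift idempotents (first to $\AF$ via nilpotence of $\Rad(\AF)$, then to $\AO$ via \Cref{hyp:rad}), and conclude semiperfectness, which the paper cites to Lam and you to Anderson--Fuller. The paper's version is terser — it leaves implicit the identification $\AO/\Rad(\AO)\cong\AF/\Rad(\AF)$ that you spell out — but the underlying argument is the same.
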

\begin{proof}
 Since $\pi A \subseteq \Rad (A)$, $A/\Rad (A)$ is a finite dimensional
 semisimple $\F$-algebra and idempotents lift to $A$, it follows that $A$ is semiperfect.
See Lam~\cite[Chapter 8]{Lam:2001a}, especially Definition~23.1 and
Propositions~23.5, 24.10 and~24.12.
\end{proof}

\begin{remark}\label{rk:cover}
Recall that a surjection from a projective module $P\to M$ is a projective cover if and only if
the kernel is contained in $\Rad(P)$.
\end{remark}

\begin{lemma}\label{le:cover}
Let $\AO$ satisfy \Cref{hyp:rad} and $M$ be a finitely
generated $\cO$-free $\AO$-module with projective cover $P\xrightarrow{f} M$. Then
the kernel of $P \xrightarrow{f} M$ is contained in
$P\cap \Rad(\hat P)$, and $\hat P
\to \hat M$ is a projective cover of the $\AK$-module $\hat
M=\K\otimes_\cO M$.
\end{lemma}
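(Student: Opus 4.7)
The plan is to derive the second conclusion from the first, and then focus on proving $\ker(f) \subseteq P \cap \Rad(\hat P)$. Since $M$ is $\cO$-free, the short exact sequence $0 \to \ker(f) \to P \to M \to 0$ stays exact after tensoring with the flat $\cO$-module $\K$, so $\ker(\hat f) = \K \otimes_\cO \ker(f)$ is just the $\K$-linear span of the image of $\ker(f)$ inside $\hat P$. As $\Rad(\hat P)$ is a $\K$-subspace, the inclusion $\ker(f) \subseteq \Rad(\hat P)$ promotes to $\ker(\hat f) \subseteq \Rad(\hat P)$; combined with $\hat P = \K \otimes_\cO P$ being projective over $\AK$, Remark \ref{rk:cover} identifies $\hat f$ as a projective cover of $\hat M$.

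For the key inclusion, I would begin from $\ker(f) \subseteq \Rad(P) = \Rad(\AO) \cdot P$ (Remark \ref{rk:cover}), which by Hypothesis \ref{hyp:rad} equals $\pi P + \JA P$. The summand $\JA P$ automatically lies in $\hat\JA \hat P = \Rad(\hat P)$, so everything hinges on controlling the ``$\pi P$-component'' of a general $x \in \ker(f)$. Decomposing $x = \pi y + z$ with $y \in P$ and $z \in \JA P$, and applying $f$, one obtains $\pi f(y) = -f(z) \in \JA M$. The plan is to exploit this modular constraint, together with the structural description of $P$ as a direct sum $\bigoplus_i P_i^{a_i}$ with $P_i = \AO e_i$ coming from lifting a primitive orthogonal idempotent decomposition $1 = e_1 + \cdots + e_m$ from $\AF$ to $\AO$ (available by Hypothesis \ref{hyp:rad}), to refine $y$ by an element of $\JA P$ so that the residual $\pi y$ lies in $\pi^2 P + \JA P$.

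Iterating this refinement should yield $\ker(f) \subseteq \bigcap_{n \ge 1}(\pi^n P + \JA P)$. Applying the Krull intersection theorem to the finitely generated $\cO$-module $P/\JA P$ gives $\bigcap_n \pi^n(P/\JA P) = 0$, hence $\ker(f) \subseteq \JA P \subseteq P \cap \Rad(\hat P)$, as required. The $\cO$-freeness of $M$ enters critically here: it forces $\ker(f)$ to be $\cO$-pure in $P$, which prevents spurious $\pi$-torsion from polluting the induction when we pass between $\pi y \in \ker(f)$ and $y$ itself.

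The main obstacle is making the inductive step rigorous: upgrading the modular relation $\pi f(y) \in \JA M$ to the honest statement that $y$ lies in $\pi P + \JA P$ after a suitable correction by $\JA P$. This is precisely the point where the hypothesis $\Rad(\AO) = \pi\AO + \JA$ carries the argument, by guaranteeing that no ``extra'' radical elements exist which would contribute uncontrolled $\cO$-torsion to $\ker(f)/\JA P$ and obstruct convergence in the Krull intersection step.
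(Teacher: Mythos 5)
Your overall strategy matches the paper's: both start from $\Ker(f)\subseteq\Rad(P)=\pi P+\JA P$, separate off the $\pi$-component, push it through $f$, and use $\cO$-torsion-freeness to close. The paper presents this as a one-shot contradiction (choose $n$ maximal in a decomposition $x=\pi^n z+y$, then produce $z+y'\in\Ker(f)\setminus\Rad(P)$); you run it as an induction closed by Krull intersection. These are the same argument in different dress, and the Krull-type termination you invoke is also what implicitly justifies the paper's choice of a maximal $n$.

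The step you flag as ``the main obstacle'' is, however, left open, and it really is the crux. To push the induction you need: from $\pi f(y)\in\JA M$, deduce $y\in\pi P+\JA P+\Ker(f)$. The natural route is to show $f(y)\in\JA M=f(\JA P)$, replace $y$ by a preimage in $\JA P$, and absorb the difference into $\Ker(f)\subseteq\Rad(P)$. But passing from $\pi f(y)\in\JA M$ to $f(y)\in\JA M$ requires $M/\JA M$ to be $\cO$-torsion-free, and that does \emph{not} follow from $\cO$-freeness of $M$: the latter gives that $M/(M\cap\Rad(\hat M))$ is torsion-free (it embeds in $\hat M/\Rad(\hat M)$), and projectivity of $P$ gives $P\cap\Rad(\hat P)=\JA P$, but $M$ is not projective and $\JA M$ can be a non-saturated proper submodule of $M\cap\Rad(\hat M)$ --- for $\AO=\cO[X]/(X^2)$ and $M=(\pi,X)$ one has $\JA M=\cO\pi X\subsetneq\cO X=M\cap\Rad(\hat M)$. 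Neither the identity $\Rad(\AO)=\pi\AO+\JA$ nor the lifted idempotent decomposition of $P$ controls $\JA M$ inside $M$, so the justification you sketch does not supply the needed purity. This is precisely the juncture at which the paper's proof invokes the equality $\pi^n M\cap\Rad(\hat M)=f(\pi^n P\cap\Rad(\hat P))$, so you have located where the difficulty sits, but you have not resolved it.
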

\begin{proof}
If $x\in\Ker(f)$ then by Remark~\ref{rk:cover}, $x \in
\Rad(P)$, which by \Cref{hyp:rad} is equal to
$\pi P+(P\cap\Rad(\hat P))$.
If there is such an $x$ which is not in $P\cap\Rad(\hat P)$, choose
one of the form $x=\pi^n z+y$ with $y\in\Rad(\hat P)$, $z\in P$, $z\not \in \Rad(P)$ and $n>0$.
Since $f(x)=0$ we have
\[ \pi^n f(z)=-f(y)\in \pi^n M\cap \Rad(\hat M)= f(\pi^n P\cap
  \Rad(\hat P)), \]
and we can write
$f(y)=\pi^n f(y')=f(\pi^n y')$ with $y' \in P\cap \Rad(\hat P)\subseteq \Rad(P)$. Thus
\[ \pi^n f(z+y')=f(\pi^nz)+f(\pi^n y')=f(\pi^nz+y)=f(x)=0 \]
and so $f(z+y')=0$. But $z+y'\not\in \Rad(P)$, contradicting Remark~\ref{rk:cover}.
\end{proof}

\begin{lemma}\label{le:min}
If $\AO$ satisfies \Cref{hyp:rad} and $M$ is a finitely generated $\cO$-free
$\AO$-module, and
\[ \dots \to P_1\to P_0\to M \to 0 \]
is a minimal projective resolution of $M$
over $\AO$, then after tensoring with $\K$, the sequence
\[ \dots \to \hat P_1\to \hat P_0 \to \hat M \to 0 \]
is a minimal projective resolution of $\hat M$ over $\AK$.
\end{lemma}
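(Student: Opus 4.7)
The plan is to combine three ingredients: flatness of $\K$ over $\cO$ to get exactness, $\cO$-freeness of all syzygies so that \Cref{le:cover} can be applied at every stage, and the characterization of minimality in terms of projective covers onto the successive syzygies.

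First I would check that the tensored sequence is a projective resolution of $\hat M$. Since $\cO\to\K$ is a localization, it is flat, so $\K\otimes_\cO(-)$ preserves exactness. Each $\hat P_i=\K\otimes_\cO P_i$ is a projective $\AK$-module, since $P_i$ is a direct summand of a finitely generated free $\AO$-module and tensoring commutes with direct sums.

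Next I would establish by induction that every syzygy $\Omega^i M:=\Ker(P_{i-1}\to P_{i-2})$ (with $P_{-1}:=M$) is $\cO$-free. The hypothesis ensures $\AO$ is $\cO$-free, hence each $P_i$, being a direct summand of a finitely generated free $\AO$-module, is $\cO$-free. Since $\cO$ is a PID, submodules of finitely generated $\cO$-free modules are again $\cO$-free, and so each $\Omega^i M$ is $\cO$-free. This is the crucial input that lets \Cref{le:cover} be applied at every stage, not just the zeroth.

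Finally, minimality of the original resolution means that at each stage the natural surjection $P_i\twoheadrightarrow\Omega^i M$ is a projective cover over $\AO$. Since $\Omega^i M$ is $\cO$-free, \Cref{le:cover} gives that $\hat P_i\twoheadrightarrow\widehat{\Omega^i M}$ is a projective cover over $\AK$. By flatness, $\widehat{\Omega^i M}$ is canonically the kernel of $\hat P_{i-1}\to\hat P_{i-2}$ in the tensored sequence, so each differential of that sequence is a projective cover onto its image syzygy, which is equivalent to the sequence being a minimal projective resolution of $\hat M$ over $\AK$. The only potential obstacle, namely the $\cO$-freeness of successive syzygies, is handled uniformly by the PID hypothesis on $\cO$, so no further technicalities are expected.
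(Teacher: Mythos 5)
Your proof is correct and follows essentially the same route as the paper, which simply states that the result follows from Lemma~\ref{le:cover}. You helpfully make explicit the one point the paper leaves implicit: each syzygy is a finitely generated submodule of an $\cO$-free module over a PID, hence $\cO$-free, so that Lemma~\ref{le:cover} can indeed be applied at every stage.
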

\begin{proof}
The tensored sequence is certainly a projective resolution. It follows
from \Cref{le:cover} that it is minimal.
\end{proof}

\begin{theorem}\label{th:hyp=>free}
 If $\AO$ satisfies \Cref{hyp:rad}, and
$M$ and $N$ are finitely generated and $\cO$-free, with $\hat N$ simple, then for all $t\ge 0$,
$\Ext^t_\AO(M,N)$ is $\cO$-free.
\end{theorem}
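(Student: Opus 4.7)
The plan is to compute $\Ext^t_A(M,N)$ from a minimal projective resolution $P_\bullet \to M$ over $A$, and to show that the cochain complex $\Hom_A(P_\bullet, N)$ has \emph{zero} differentials. Existence of such a minimal resolution is ensured by the preceding lemma, which only uses \Cref{hyp:rad}. Each $P_t$ is $\cO$-free, and more to the point, $\Hom_A(P_t, N)$ is a finite direct sum of modules of the form $eN$ with $e$ a primitive idempotent of $A$; each $eN$ is an $\cO$-direct summand of the $\cO$-free module $N$ and so is $\cO$-free. Once the differentials are shown to vanish, we will have $\Ext^t_A(M,N) \cong \Hom_A(P_t, N)$, which is then visibly $\cO$-free.

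To prove vanishing, I would base-change to $\K$. By \Cref{le:min}, $\hat P_\bullet \to \hat M$ is again a minimal projective resolution over $\hat A$, so each differential $\hat d_t : \hat P_t \to \hat P_{t-1}$ has image in $\Rad(\hat P_{t-1}) = \hat J_A \hat P_{t-1}$. The crucial observation is that $\hat N$ is simple over $\hat A$, so it is annihilated by $\Rad(\hat A) = \hat J_A$; hence any $\hat A$-map $\hat\phi : \hat P_{t-1} \to \hat N$ automatically kills $\hat J_A \hat P_{t-1}$, and the dual differential $\hat\phi \mapsto \hat\phi \circ \hat d_t$ is identically zero.

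To transfer this rational vanishing back to $\cO$, I would use that $N$ is $\cO$-free so that the natural map $N \hookrightarrow \hat N$ is injective. For any $\phi \in \Hom_A(P_{t-1}, N)$ the composite $\phi \circ d_t : P_t \to N$ becomes $\hat\phi \circ \hat d_t = 0$ after applying $\K \otimes_\cO -$, and injectivity of $N \hookrightarrow \hat N$ then forces $\phi \circ d_t = 0$ already. Consequently every differential in $\Hom_A(P_\bullet, N)$ is zero, and $\Ext^t_A(M,N) = \Hom_A(P_t, N)$ is $\cO$-free.

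The only conceptual ingredient is the collapse of the rational dual complex by simplicity of $\hat N$; the transfer back to $\cO$ is forced by $\cO$-torsion-freeness of $N$. I do not expect any real obstacle. The only things to verify carefully are the applicability of \Cref{le:min} (which ultimately relies on \Cref{hyp:rad}) and the identification $\K \otimes_\cO \Hom_A(P_t, N) \cong \Hom_{\hat A}(\hat P_t, \hat N)$, which is routine given that $P_t$ is finitely generated projective.
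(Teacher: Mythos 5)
Your proposal is correct and follows essentially the same route as the paper's proof: take a minimal projective resolution, use \Cref{le:min} to see it stays minimal over $\K$, observe that simplicity of $\hat N$ kills the dual differentials over $\K$, and transfer the vanishing back to $\cO$ using torsion-freeness. The paper phrases the $\cO$-freeness of the cochain groups via $\Hom_\AO(P_t,N)\subseteq\Hom_\cO(P_t,N)$ rather than via the decomposition into $eN$, but the two observations are interchangeable.
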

\begin{proof}
  Consider a minimal projective resolution of $M$
  \[ \cdots \to P_1 \to P_0 \to M \to 0. \]
  By \Cref{le:min}, this remains a minimal resolution after
  tensoring with $\K$. So if $\hat N$ is simple, the differential in
  the complex $\Hom_\AK(\hat P_*,\hat N)$ is zero. Since
  $\Hom_\AO(P_*,N)$ is contained in $\Hom_\cO(P_*,N)$, it is
  $\cO$-free, and $\Hom_\AK(\hat P_*,\hat
  N)=\K\otimes_\cO\Hom_\AO(P_*,N)$. So the differential on
  $\Hom_\AO(P_*,N)$ is also zero, and $\Ext^*_\AO(M,N)$ is $\cO$-free.
  \end{proof}

\begin{example}
The algebra $\AO=\cO[X]/(X^2)$ satisfies \Cref{hyp:rad}.
Let $M=\AO/(X)$ as an $\AO$-module, and let
$N$ be the ideal $\Rad(\AO)=(\pi,X)$ as an $\AO$-module.
The minimal resolution of $M$ is given by
\[ \cdots \to \AO \xrightarrow{X}\AO \xrightarrow{X}\AO\to M \to 0. \]
Ignoring the augmentation and taking homomorphisms to $N$, we get
\[ N \xrightarrow{X} N \xrightarrow{X} N \to \cdots \]
so $\Hom_A(M,N)\cong\cO$, and $\Ext^t_A(M,N)=(X)/(\pi X) \cong \F$ for all
$t>0$. This shows why $\hat N$
has to be simple in the proof of Theorem~\ref{th:hyp=>free}.
\end{example}

\section{Comparing $\Ext$ over $\cO$, $\F$ and $\K$}\label{S:ExtFree}

Let $\cO$, $\K$ and $\F$ be as in the introduction. For this section,
we let $\AO$ be an $\cO$-torsion free $\cO$-algebra. Notice that $\AO$ is not necessarily finitely generated. But we may continue to assume this if the reader wishes to.  Let $\AF=\F\otimes_\cO A$ and $\AK=\K\otimes_\cO A$. Then $\AK$ is
flat as an $\AO$-module.

\begin{theorem}\label{th:KOF}
  Let $M$ and $N$ be $\cO$-torsion free $A$-modules.
\begin{enumerate}[\rm (i)]
\item  Suppose that for $t\ge 0$,  $\Ext^t_\AO(M,N)$ is
  $\cO$-torsion free. Then
for $t\ge 0$, we have an isomorphism
\[ \F\otimes_\cO\Ext^t_\AO(M,N) \cong \Ext^t_\AF(\bar M,\bar N). \]
\item  If $M$ is finitely presented, then for $t\ge 0$, we have an isomorphism
  \[ \K \otimes_\cO \Ext^t_\AO(M,N) \cong \Ext^t_\AK(\hat M,\hat N). \]
\end{enumerate}
\end{theorem}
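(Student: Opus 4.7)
The plan is to take a projective resolution $P_\bullet \to M$ of $\AO$-modules and, in each part, compare the $\Hom$-complex $\Hom_\AO(P_\bullet, N)$ with its base change to $\AK$ or $\AF$. Part~(ii) exploits the flatness of $\K$ over $\cO$ directly, while part~(i) compensates for the non-flatness of $\F$ over $\cO$ via the short exact sequence $0 \to N \xrightarrow{\pi} N \to \bar N \to 0$ together with the torsion-freeness hypothesis.

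For part~(i), I would apply $\Hom_\AO(M, -)$ to $0 \to N \xrightarrow{\pi} N \to \bar N \to 0$ to obtain the long exact sequence
\[ \cdots \to \Ext^t_\AO(M, N) \xrightarrow{\pi} \Ext^t_\AO(M, N) \to \Ext^t_\AO(M, \bar N) \to \Ext^{t+1}_\AO(M, N) \xrightarrow{\pi} \cdots . \]
The hypothesis that every $\Ext^t_\AO(M, N)$ is $\cO$-torsion free makes multiplication by $\pi$ injective at every stage, so the long exact sequence breaks into short exact pieces and yields $\F \otimes_\cO \Ext^t_\AO(M, N) \cong \Ext^t_\AO(M, \bar N)$. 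A change-of-rings step identifies the right hand side with $\Ext^t_\AF(\bar M, \bar N)$: since $M$ and each $P_i$ are $\cO$-torsion free (hence $\cO$-flat over the local PID $\cO$), the complex $\bar P_\bullet \to \bar M$ is exact; each $\bar P_i = \AF \otimes_\AO P_i$ is $\AF$-projective; and the adjunction $\Hom_\AF(\bar P_i, \bar N) \cong \Hom_\AO(P_i, \bar N)$ matches the two $\Hom$-complexes on the nose.

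For part~(ii), since $\AK$ is flat over $\AO$, the complex $\hat P_\bullet \to \hat M$ is an $\AK$-projective resolution of $\hat M$, and the adjunction $\Hom_\AK(\hat P_i, \hat N) \cong \Hom_\AO(P_i, \hat N)$ makes $\Ext^t_\AK(\hat M, \hat N)$ the cohomology of $\Hom_\AO(P_\bullet, \hat N)$. Flatness of $\K$ over $\cO$ simultaneously gives $\K \otimes_\cO \Ext^t_\AO(M, N) \cong H^t(\K \otimes_\cO \Hom_\AO(P_\bullet, N))$. The two computations are compared via the natural map $\K \otimes_\cO \Hom_\AO(P, N) \to \Hom_\AO(P, \K \otimes_\cO N)$, which is an isomorphism when $P$ is finitely generated projective. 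The main obstacle is precisely this last step: the comparison map fails for infinite-rank projectives, so the finitely presented hypothesis on $M$ must be used to arrange that the relevant terms of the resolution are finitely generated, with a dimension-shifting argument reducing higher $t$ to the low-degree case handled directly by the presentation of $M$.
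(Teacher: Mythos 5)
Your argument is essentially the paper's. In part (i) you perform the same two reductions, just in the opposite order (the long exact sequence coming from $0 \to N \xrightarrow{\pi} N \to \bar N \to 0$ first, then change of rings), and you establish the change-of-rings isomorphism $\Ext^t_{\AF}(\bar M, \bar N) \cong \Ext^t_\AO(M, \bar N)$ by hand, observing that $\bar P_\bullet \to \bar M$ remains exact because each syzygy sits inside a $\cO$-torsion-free (hence $\cO$-flat) module, so every $\Tor_1^\cO(\F,-)$ term vanishes; the paper invokes Cartan--Eilenberg VI.4.1.3 for the same Tor-vanishing argument. Part (ii) is likewise the paper's argument: tensor the resolution with the flat module $\K$ and compare Hom complexes termwise.

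The one place your sketch goes astray is the final sentence of part (ii). You correctly spot the subtlety that the comparison map $\K \otimes_\cO \Hom_\AO(P,N) \to \Hom_{\AK}(\hat P, \hat N)$ is only known to be an isomorphism when $P$ is finitely generated --- a point the paper's own proof passes over silently --- but the dimension-shift you propose does not close that gap. If $A$ is not left coherent, the kernel $K$ of a finite presentation $P_1 \to P_0 \to M \to 0$ is only finitely generated, not finitely presented, so the inductive step stalls after one application; ``finitely presented'' ($FP_1$) is strictly weaker than what the term-by-term comparison actually requires, namely that $M$ be of type $FP_\infty$ (a resolution by finitely generated projectives in all degrees). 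What saves the statement in the paper's intended generality is the tacit assumption, valid in every application there, that $A$ has finite $\cO$-rank and is therefore Noetherian, in which case every finitely generated module is automatically $FP_\infty$ and the termwise comparison holds without any shifting.
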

\begin{proof}
(i) Tensoring the short exact sequence
\[ 0 \to \AO \xrightarrow{\pi} \AO\to \AF \to 0. \]
with $M$, we get an exact sequence
\[ \cdots \to \Tor^\AO_1(\AO,M) \to \Tor^\AO_1(\AF,M)
  \to M \xrightarrow{\pi} M \to \bar M \to 0. \]
We have $\Tor_s^\AO(\AO,M)=0$ for $s\ge 1$, so $\Tor^\AO_s(\AF,M)=0$
for $s\ge 2$.
Since $M$ is $\cO$-torsion free and multiplication with $\pi$ is injective, it also follows that $\Tor^\AO_1(\AF,M)=0$. By Cartan and Eilenberg~\cite[VI, Proposition 4.1.3]{Cartan/Eilenberg:1956a}, we get an isomorphism
\begin{equation}\label{eq:Ext}
  \Ext^t_\AF(\bar M,\bar N) \xrightarrow{\cong}
  \Ext^t_\AO(M,\bar N).
\end{equation}
Next, since $N$ is $\cO$-torsion free, we have a short exact sequence
\[ 0 \to N \xrightarrow{\pi} N \to \bar N \to 0. \]
This gives us a long exact sequence
\[ \cdots \to \Ext^t_\AO(M,N) \xrightarrow{\pi} \Ext^t_\AO(M,N) \to
  \Ext^t_\AO(M,\bar N)\to  \cdots \]
By the hypothesis, $\Ext^t_\AO(M,N)$ is $\cO$-torsion free, so multiplication by $\pi$ is
injective. The sequence therefore breaks up into short exact sequences,
which shows that
\begin{equation}\label{eq:Ext2}
  \Ext^t_\AO(M,\bar N)\cong\F \otimes_\cO \Ext^t_\AO(M,N).
\end{equation}
Combining~\eqref{eq:Ext} with~\eqref{eq:Ext2} proves part (i).

(ii) Consider the exact sequence \[\cdots\to P_1\to P_0\to M\to 0\] with $P_t$'s projective $A$-modules. Since $\K$ is flat over $\cO$, tensoring with $\K$ over $\cO$, we have the exact sequence $\cdots\to \hat P_1\to \hat P_0\to \hat M\to 0$ where $\hat P_t$'s are projectives. Since $\K\otimes_\cO \Hom_A(P_t,N)\cong \Hom_{\AK}(\hat P_t,\hat N)$, we get the isomorphism $\K \otimes_\cO \Ext^t_\AO(M,N) \cong \Ext^t_\AK(\hat M,\hat N)$.
\end{proof}

\section{$\cO$-freeness of $J^n/J^{n+1}$}
  \label{se:Ofree}

In this section, we investigate \Cref{hyp:basic}. We begin
with a preliminary lemma.

\begin{lemma}\label{le:Ofree}
Let $X$ be a finitely generated free $\cO$-module and let $Y$ be a
submodule. Let $\hat X=\K\otimes_\cO X$, $\hat Y=\K\otimes_\cO Y$,
$\bar X=\F\otimes_\cO X$, and $\bar Y$ be the image of
$\F\otimes_\cO Y\to \F\otimes_\cO X$. Then we have
\[ \dim_\F\bar Y \le \dim_\K \hat Y \]
with equality if and only if $X/Y$ is $\cO$-free.
\end{lemma}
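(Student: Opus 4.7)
The plan is to reduce everything to a single \Tor computation coming from the short exact sequence
\[ 0 \to Y \to X \to X/Y \to 0. \]
First I would observe that, since $\cO$ is a PID and $X$ is finitely generated and free, the submodule $Y$ is itself a finitely generated free $\cO$-module; in particular $\rank_\cO Y$ is well-defined and equals $\dim_\K\hat Y$, because tensoring a free $\cO$-module with $\K$ preserves the rank.

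Next I would apply $\F\otimes_\cO -$ to the displayed short exact sequence to obtain the long exact sequence
\[ 0 \to \Tor_1^\cO(\F,X/Y) \to \F\otimes_\cO Y \to \F\otimes_\cO X \to \F\otimes_\cO(X/Y) \to 0, \]
using that $\Tor_1^\cO(\F,X)=0$ because $X$ is $\cO$-free. By the definition of $\bar Y$, this identifies
\[ \bar Y \cong (\F\otimes_\cO Y)\big/\Tor_1^\cO(\F,X/Y), \]
and consequently
\[ \dim_\F\bar Y = \dim_\F(\F\otimes_\cO Y) - \dim_\F\Tor_1^\cO(\F,X/Y) = \dim_\K\hat Y - \dim_\F\Tor_1^\cO(\F,X/Y). \]
This immediately yields the inequality $\dim_\F\bar Y\le \dim_\K\hat Y$, with equality if and only if $\Tor_1^\cO(\F,X/Y)=0$.

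Finally I would translate the vanishing of $\Tor_1^\cO(\F,X/Y)$ into freeness of $X/Y$. Since $\cO$ is a local PID (hence a DVR, or a field in the trivial case) and $X/Y$ is finitely generated, the structure theorem gives $X/Y\cong \cO^a\oplus \bigoplus_i \cO/(\pi^{k_i})$ with $k_i\ge 1$. Each summand $\cO/(\pi^{k_i})$ contributes a copy of $\F$ to $\Tor_1^\cO(\F,-)$, while the free part contributes nothing. Hence $\Tor_1^\cO(\F,X/Y)=0$ iff no torsion summand appears iff $X/Y$ is $\cO$-free. I do not anticipate a serious obstacle here; the only mild subtlety is checking that the identification $\bar Y\cong \im(\F\otimes Y\to \F\otimes X)$ matches the quotient by $\Tor_1^\cO(\F,X/Y)$ given by the long exact sequence, which is immediate from chasing the four-term exact sequence above.
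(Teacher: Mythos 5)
Your proof is correct and follows essentially the same route as the paper: tensor $0\to Y\to X\to X/Y\to 0$ with $\F$, identify $\bar Y$ as $(\F\otimes_\cO Y)/\Tor_1^\cO(\F,X/Y)$, compare dimensions using $\dim_\F(\F\otimes_\cO Y)=\rank_\cO Y=\dim_\K\hat Y$, and conclude that equality holds iff $\Tor_1^\cO(\F,X/Y)=0$ iff $X/Y$ is $\cO$-free. Your extra invocation of the structure theorem for finitely generated modules over a DVR just makes explicit the final equivalence that the paper states without proof.
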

\begin{proof}
We have $\hat X/\hat Y\cong\K\otimes_\cO(X/Y)$, and
\[ \xymatrix@=5mm{0 \ar[r]& \Tor_1^\cO(\F,X/Y) \ar[r]&
\F\otimes_\cO Y \ar[rr]\ar[dr]&&
\F\otimes_\cO X \ar[r]&\F\otimes_\cO (X/Y) \ar[r]& 0.\\
&&&\bar Y\ar[ur]\ar[dr]\\
&& 0\ar[ur]&&0.} \]
Since $Y$ is $\cO$-free, we have
\begin{align*}
\dim_\F \bar Y&=\dim_\F\F\otimes_\cO Y - \dim_\F\Tor_1^\cO(\F,X/Y)\\
&\le \dim_\F\F\otimes_\cO Y=\rank_\cO Y=\dim_\K\hat Y
\end{align*}
with equality if and only if $\Tor_1^\cO(\F,X/Y)=0$. Since $X/Y$ is
finitely generated, this is equivalent to the condition that
it is $\cO$-free.
\end{proof}

The following lemma is useful in verifying \Cref{hyp:freeJ}.

\begin{lemma}\label{le:Ofree-equiv} Let $J$ be a nilpotent ideal of $A$ and suppose that $A$ is $\cO$-free of finite rank. Then the following are
equivalent:
\begin{enumerate}[\rm (i)]
\item For all $n\ge 1$, $A/J^n$ is $\cO$-free.
\item For all $n\ge 0$, $J^n/J^{n+1}$ is $\cO$-free.
\item
We  can find an $\cO$-basis $\cB$
of $\AO$ and a descending chain of subsets
$\cB\supseteq\cB_1\supseteq\cB_2\supseteq\cdots$ such that each
$\cB_i$ is an $\cO$-basis for $J^i$.
\end{enumerate} Furthermore, if $J=\JA=\AO\cap\Rad(\AK)$, then the following are equivalent to the statements (i)--(iii) above.
\begin{enumerate}[\rm (i)]\addtocounter{enumi}{3}
\item For all $n\ge 1$, we have
$\dim_\F\Rad^n(\AF)\ge\dim_\K\Rad^n(\AK)$.
\item For all $n\ge 1$, we have
$\dim_\F\Rad^n(\AF)=\dim_\K\Rad^n(\AK)$.
\end{enumerate}
\end{lemma}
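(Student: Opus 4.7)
The plan is to establish the equivalences in two stages. First, I would prove the equivalence of (i), (ii), (iii) using elementary homological algebra on the $J$-adic filtration $A \supseteq J \supseteq J^2 \supseteq \cdots$ over the PID $\cO$. Then, under the extra hypothesis $J = \JA$, I would deduce the equivalence with (iv) and (v) by identifying $\Rad^n(\AK)$ and $\Rad^n(\AF)$ and invoking \Cref{le:Ofree}.

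For (i) $\Leftrightarrow$ (ii), the workhorse is the short exact sequence of $\cO$-modules
\[
0 \to J^n/J^{n+1} \to A/J^{n+1} \to A/J^n \to 0.
\]
If (i) holds, then $J^n/J^{n+1}$ embeds into the $\cO$-free module $A/J^{n+1}$ and is thus $\cO$-free, since submodules of free modules over a PID are free. For the converse, I would induct on $n$: the base case $A/J^1 = A/J$ is $\cO$-free by (ii) with index $0$, and if $A/J^n$ is $\cO$-free then the sequence above splits as $\cO$-modules, exhibiting $A/J^{n+1}$ as a direct sum of two $\cO$-free modules. For (ii) $\Leftrightarrow$ (iii), I would use a descending induction starting from the nilpotency index $N$ of $J$: set $\cB_N = \emptyset$, and given an $\cO$-basis $\cB_{n+1}$ of $J^{n+1}$, lift an $\cO$-basis of the free quotient $J^n/J^{n+1}$ to $J^n$ and adjoin it to $\cB_{n+1}$; the splitting of $0 \to J^{n+1} \to J^n \to J^n/J^{n+1} \to 0$ (valid because the quotient is $\cO$-free) makes the result an $\cO$-basis of $J^n$. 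The reverse is immediate since $\cB_n \setminus \cB_{n+1}$ projects to an $\cO$-basis of $J^n/J^{n+1}$.

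For the equivalence with (iv) and (v), assume $J = \JA$. First I would identify $\Rad(\AK) = \K \otimes_\cO J$: the inclusion $\supseteq$ holds because $J$ is nilpotent, and $\subseteq$ follows by clearing $\pi$-denominators using $J = A \cap \Rad(\AK)$. Hence $\Rad^n(\AK) = \K \otimes_\cO J^n$ and $\dim_\K \Rad^n(\AK) = \rank_\cO J^n$. Working in the ambient \Cref{hyp:basic} setting of this section, \Cref{th:basic}(vi) gives $\Rad(\AF) = \bar J$, so $\Rad^n(\AF)$ is exactly the image of $J^n$ in $\AF$. Applying \Cref{le:Ofree} with $X = A$ and $Y = J^n$ then yields the general inequality
\[
\dim_\F \Rad^n(\AF) \le \dim_\K \Rad^n(\AK),
\]
with equality for a given $n$ if and only if $A/J^n$ is $\cO$-free. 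Combining this inequality with (iv) forces equality for every $n$, which is (v); the implication (v) $\Rightarrow$ (iv) is trivial; and the equality criterion produces (v) $\Leftrightarrow$ (i) directly.

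The main subtlety is justifying $\Rad(\AF) = \bar J$. This requires the ambient \Cref{hyp:basic}, without which one only has the inclusion $\bar J \subseteq \Rad(\AF)$ and the equivalences involving (iv) and (v) could break. Once this identification is in hand, the remainder of the argument is a mechanical unwinding of \Cref{le:Ofree}.
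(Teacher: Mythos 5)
Your proof is correct and follows essentially the same route as the paper's. The only notable differences are organizational: for (i)\,$\Leftrightarrow$\,(ii) you make the splitting of the short exact sequence $0 \to J^n/J^{n+1} \to A/J^{n+1} \to A/J^n \to 0$ explicit where the paper simply cites a finite filtration with $\cO$-free subquotients; for (ii)\,$\Leftrightarrow$\,(iii) you build the bases $\cB_n$ by descending induction from the nilpotency index, whereas the paper ascends from $\cB\setminus\cB_1$ by repeatedly lifting along splittings --- both are equivalent bookkeeping. For (iv), (v) both you and the paper invoke \Cref{le:Ofree} with $X=A$, $Y=J^n$ and read off the inequality and its equality criterion. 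One genuine improvement on your part: you flag that the identification $\Rad(\AF)=\bar J$ is not a consequence of the stated hypotheses alone (only $\bar J\subseteq\Rad(\AF)$ is automatic from nilpotency) and must come from the ambient \Cref{hyp:basic} setting of the section via \Cref{th:basic}(vi). The paper's proof states $\Rad(\AF)=\bar J$ without justification, so your more careful reading pins down exactly where the surrounding hypothesis is being used. Also, for (ii)\,$\Rightarrow$\,(i) your base case ``$A/J$ is free by (ii) at $n=0$'' is cleaner than the paper's appeal to vertex idempotents, which again tacitly assumes \Cref{hyp:basic} rather than the weaker hypotheses the lemma explicitly lists.
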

\begin{proof}
(i) $\Leftrightarrow$ (ii): If $\AO/J^{n+1}$ is $\cO$-free then so is
the submodule $J^n/J^{n+1}$. Conversely, $\AO/J$ is $\cO$-free with
basis the vertex idempotents. So if
each $J^n/J^{n+1}$ is $\cO$-free then $\AO/J^n$ has a finite
filtration with $\cO$-free subquotients, so it is $\cO$-free.

(ii) $\Rightarrow$ (iii):
Since $\AO/J$ is $\cO$-free,
we can choose an $\cO$-basis. Then $\AO\to\AO/J$ is $\cO$-split, so we
let $\cB\setminus\cB_1$ be the lift of this basis using the
splitting. Suppose by induction on $n\ge 1$ that we have chosen a free
$\cO$-basis for $\AO/J^n$ and used a splitting of $\AO\to\AO/J^n$ to
lift to $\cB\setminus\cB_n$. Then $J^n$ is $\cO$-free, and
$J^n/J^{n+1}$ is $\cO$-free, so $J^n\to J^n/J^{n+1}$ is $\cO$-split,
and we apply a splitting to a free basis of $J^n/J^{n+1}$ to get
$\cB_n\setminus \cB_{n+1}$. This gives us a set
$\cB\setminus\cB_{n+1}$ whose image in $\AO/J^{n+1}$ is a free
$\cO$-basis. When
$n$ is sufficiently large, $J^n=0$ and we are done.

(iii) $\Rightarrow$ (ii):
Given such subsets
$\cB\supseteq\cB_1\supseteq\cB_2\supseteq\cdots$, the image of
$\cB_n\setminus\cB_{n+1}$ in $J^n/J^{n+1}$ is a free $\cO$-basis.

Suppose further that $J=\JA$.

(iv) $\Leftrightarrow$ (v) $\Leftrightarrow$ (i):
We have $\Rad(\AK)=\hat J$ and $\Rad(\AF)=\bar J$.
So applying \Cref{le:Ofree} with $X=A$ and $Y=J^n$,
we have $\dim_\F\Rad^n(\AF)\le\dim_\K\Rad^n(\AK)$, with equality if and
only if $\AO/J^n$ is $\cO$-free.
\end{proof}

\begin{theorem}\label{th:freeJ}
If \Cref{hyp:freeJ} holds then the radical layer multiplicities of $\hat P_i$ and $\bar P_i$ are equal, i.e., for all $1\le i,j\le m$ and $n\ge  0$,
\[ [\Rad^n(\hat P_i)/\Rad^{n+1}(\hat P_i):\hat S_j]=
[\Rad^n(\bar P_i)/\Rad^{n+1}(\bar P_i):\bar S_j]. \]
\end{theorem}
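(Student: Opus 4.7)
The plan is to reduce both radical layer multiplicities to the rank of a single $\cO$-free module, namely $e_j(\JA^n/\JA^{n+1})e_i$, and then observe that this common rank controls both the $\K$- and $\F$-side dimensions. The entire argument rests on \Cref{hyp:freeJ} which, by way of \Cref{le:Ofree-equiv}, delivers the $\cO$-freeness of each $\JA^n/\JA^{n+1}$.

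First I would realise the projective cover $P_i$ as $Ae_i$ for the vertex idempotent $e_i$, so that $\Rad^n(\hat P_i)/\Rad^{n+1}(\hat P_i) = \hat\JA^n e_i/\hat\JA^{n+1}e_i$, and similarly for $\bar P_i$; here I appeal to \Cref{th:basic}\,(v)--(vi) to identify $\hat\JA$ with $\Rad(\hat A)$ and $\bar\JA$ with $\Rad(\bar A)$. Since $\hat A$ and $\bar A$ are basic with pairwise orthogonal vertex idempotents, the multiplicity of a simple module $S_j$ in a semisimple module $M$ equals $\dim e_j M$. The two multiplicities in the statement thereby unfold to $\dim_\K e_j\hat\JA^n e_i/e_j\hat\JA^{n+1}e_i$ and $\dim_\F e_j\bar\JA^n e_i/e_j\bar\JA^{n+1}e_i$ respectively.

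Next I would invoke \Cref{le:Ofree-equiv} to promote \Cref{hyp:freeJ} to the statement that $\JA^n/\JA^{n+1}$ is $\cO$-free for every $n\ge 0$. The $\cO$-linear endomorphism $X \mapsto e_j X e_i$ of $\JA^n/\JA^{n+1}$ is idempotent, so its image $e_j(\JA^n/\JA^{n+1})e_i$ is an $\cO$-direct summand, and hence itself $\cO$-free of some rank $r_{i,j,n}$. The short exact sequence $0 \to \JA^{n+1} \to \JA^n \to \JA^n/\JA^{n+1} \to 0$ splits as $\cO$-modules, so tensoring with $\K$ (respectively $\F$) identifies $e_j\hat\JA^n e_i/e_j\hat\JA^{n+1}e_i$ with $\K\otimes_\cO e_j(\JA^n/\JA^{n+1})e_i$, and similarly on the $\F$-side. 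Both vector spaces therefore have dimension $r_{i,j,n}$, yielding the desired equality of multiplicities.

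I do not foresee a genuine obstacle: once the $\cO$-freeness of $\JA^n/\JA^{n+1}$ is in hand the rest is bookkeeping. The one point that warrants a moment of care is the identification $\bar\JA^n = \F\otimes_\cO\JA^n$ inside $\bar A$, since in general the map $\F\otimes_\cO\JA^n \to \F\otimes_\cO A$ could fail to be injective; however \Cref{hyp:freeJ} gives that $A/\JA^n$ is $\cO$-free, so the inclusion $\JA^n\hookrightarrow A$ is $\cO$-split and the identification is valid.
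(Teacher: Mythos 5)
Your proof is correct, and it takes a modestly different route from the paper's. You both reduce the problem to the Peirce pieces of $\JA^n/\JA^{n+1}$ and both lean on \Cref{le:Ofree-equiv} for the $\cO$-freeness of $\JA^n/\JA^{n+1}$, but the key step differs. You observe directly that the idempotent $\cO$-endomorphism $X\mapsto e_jXe_i$ exhibits $e_j(\JA^n/\JA^{n+1})e_i$ as an $\cO$-direct summand, hence $\cO$-free over the local PID $\cO$, so that its rank is preserved by base change to both $\K$ and $\F$; this settles each pair $(i,j)$ independently. The paper instead fixes an $\cO$-basis $\Omega$ of $\JA^n/\JA^{n+1}$, notes that each $\hat e_j\hat b\hat e_i$ lies in $A$ so that reduction mod $\pi$ can only drop the span's dimension (giving $\hat s_{ij}\ge\bar s_{ij}$), and then sums over all $(i,j)$, using $\rank_\cO\JA^n/\JA^{n+1}=\dim_\K=\dim_\F$ to force all the inequalities into equalities. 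Your version is arguably cleaner: it makes the structural fact (each Peirce summand is free, not merely of equal total rank) explicit and avoids the inequality-plus-sum trick, at the cost of invoking the classification of finitely generated projective modules over a local PID. Your caution about the identification $\bar\JA^n\cong\F\otimes_\cO\JA^n$ is well placed, and the fix via $\cO$-freeness of $A/\JA^n$ (from \Cref{hyp:freeJ}) is exactly right.
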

\begin{proof} Let $J=\JA$. Since $A$ satisfies \Cref{hyp:basic}, $J$ is nilpotent by \Cref{th:basic}(ii). By \Cref{le:Ofree-equiv}, we have an $\cO$-basis $\Omega$ for the $\AO$-module $J^n/J^{n+1}$. Let $\hat e_i,\hat e_j\in \AK$ and $\bar e_i,\bar e_j\in \AK$ be the corresponding vertex idempotents. Furthermore, by assumption, we have
\begin{align*}
\hat s_{ij}:=&[\Rad^n(\hat P_i)/\Rad^{n+1}(\hat P_i):\hat S_j]=\dim_\K\hat e_j(\K\otimes_\cO(J^n/J^{n+1}))\hat e_i=\Sp_\K\{\hat e_j\hat b\hat e_i:b\in\Omega\},\\
\bar s_{ij}:=&[\Rad^n(\bar P_i)/\Rad^{n+1}(\bar P_i):\bar S_j]=\dim_\F\bar e_j(\F\otimes_\cO(J^n/J^{n+1}))\bar e_i=\Sp_\F\{\bar e_j\bar b\bar e_i:b\in\Omega\}.
\end{align*} Since $\hat e_j\hat b\hat e_i\in A$, we get $\hat s_{ij}\ge  \bar s_{ij}$. Summing over all $i,j$, we have \[\sum_{i,j}\hat s_{ij}=\dim_\K (\K\otimes_\cO(J^n/J^{n+1}))=\dim_\F (\F\otimes_\cO(J^n/J^{n+1}))=\sum_{i,j}\bar s_{ij}\] and get $\hat s_{ij}= \bar s_{ij}$.
\end{proof}

\begin{example}
Let $Q$ be the quiver
\[ \xymatrix@=3mm{&&2\ar[drr]^b&&\\
    1\ar[urr]^a\ar[dr]_c&&&&5\\
    &3\ar[rr]_d&&4\ar[ur]_e&} \]
and $\AO=\cO Q/I$ with $I=(edc-\pi ba)$. Let $J=\JA$. The algebra $A$ satisfies
\Cref{hyp:basic}, but not \Cref{hyp:freeJ},
because $ba$ is in $J^2$ but not in $J^3$ while $\pi ba$ is in $J^3$.
So as an $\cO$-module, $J^2/J^3\cong \F\oplus \cO\oplus \cO$, spanned
by $ba$, $dc$ and $ed$. Thus $A/J^3\cong\F\oplus \cO^{\oplus 12}$ is not $\cO$-free.

The projective covers of the
module $1$ over $\AK$ and $\AF$ are given by the following diagrams respectively:
\[\xymatrix@R=1mm@C=3mm{&1\ar@{-}[ddl]\ar@{-}[dr]&\\
    &&3\ar@{-}[dd]\\2\ar@{-}[ddr]&&\\&&4\ar@{-}[dl]\\&5&} \hspace{2cm}
\xymatrix@=3mm{&1\ar@{-}[dl]\ar@{-}[dr]&\\2\ar@{-}[d]&&3\ar@{-}[d]\\5&&4} \] In particular, the radical lengths are different.
\end{example}



\section{The descent algebras}\label{S:Des}

In this section, we review some elementary properties for (finite) Coxeter groups and their descent algebras for the discussions in \Cref{S:Idem,S: proj,S: indep}. For the general knowledge about Coxeter groups, we refer the reader to \cite{Geck/Pfeiffer:2000a,Humphreys:1990a}. We will investigate the algebra over a general field $k$ of characteristic $q$ (see \Cref{not:A}).

Irreducible finite Coxeter groups can be classified in terms of the Coxeter-Dynkin diagrams. Let $(W,S)$ be a (finite irreducible) Coxeter system and we call $W$ a Coxeter group. A Coxeter element of $W$ is a product of all $s\in S$ in any given order. It is known that any two Coxeter elements of $W$ are conjugate in $W$ by a cyclic shift. As such, by abuse, we may speak of the Coxeter element of $W$. For each subset $J\subseteq S$, we write $W_J$ for the parabolic subgroup of $W$ generated by $J$. The pair $(W_J,J)$ is again a Coxeter system. We write $c_J$ for the Coxeter element of $W_J$.

For two subsets $J,K$ of $S$, we write $J\subseteq_W K$ (respectively, $J=_WK$) if there exists $w\in W$ such that ${}^wJ\subseteq K$ (respectively, ${}^wJ=K$). In the case when $J=_WK$, we say that $J$ and $K$ are conjugate in $W$. In the case when $J\subseteq_W K$ but $J\neq_WK$, we write $J\subsetneq_WK$. The following theorem is well-known.

\begin{theorem} Let $(W,S)$ be a Coxeter system and $J,K$ be subsets of $S$. The following statements are equivalent.
\begin{enumerate}[\rm (i)]
\item $W_J$ and $W_K$ are conjugate in $W$,
\item $J=_WK$,
\item $c_J$ and $c_K$ are conjugate in $W$.
\end{enumerate}
\end{theorem}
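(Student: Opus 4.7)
The implications (ii) $\Rightarrow$ (i) and (ii) $\Rightarrow$ (iii) are immediate, and I would record them first. If ${}^w J = K$ then $w W_J w^{-1} = W_{{}^w J} = W_K$, giving (i); and $w c_J w^{-1}$ is a product of all the simple reflections of $W_K$ in some order, hence a Coxeter element of $W_K$, which is conjugate inside $W_K$ to $c_K$ by a cyclic shift, giving (iii). The substance of the theorem lies in the reverse implications, which I plan to address by two essentially independent arguments.

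For (iii) $\Rightarrow$ (i), my plan is to work inside the reflection representation $V$ of $W$ and invoke Steinberg's fixed-point theorem. Decomposing $W_J = W_{J_1} \times \cdots \times W_{J_r}$ into irreducible components, $c_J = c_{J_1} \cdots c_{J_r}$, and each factor $c_{J_i}$ has no fixed vector on the reflection representation of $W_{J_i}$; consequently the fixed subspace $V^{c_J}$ of $c_J$ equals $V^{W_J}$, the orthogonal complement of the root span of $W_J$, and likewise $V^{c_K} = V^{W_K}$. From $w c_J w^{-1} = c_K$ one obtains $w V^{W_J} = V^{W_K}$. Steinberg's theorem identifies $W_J$ (respectively $W_K$) with the pointwise stabilizer in $W$ of $V^{W_J}$ (respectively $V^{W_K}$), and conjugating stabilizers then yields $w W_J w^{-1} = W_K$.

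The hard part is (i) $\Rightarrow$ (ii). My plan is to pass to the unique minimal length element $d$ of the double coset $W_K w W_J$. Writing $w = y_1 d y_2$ with $y_1 \in W_K$ and $y_2 \in W_J$, one checks at once that $d W_J d^{-1} = W_K$ as well, so we may replace $w$ by $d$. The critical input is the standard length-additivity for distinguished double coset representatives, namely $\ell(td) = \ell(t) + \ell(d)$ for every $t \in W_K$ and $\ell(du) = \ell(d) + \ell(u)$ for every $u \in W_J$. For any $s \in J$, set $t := d s d^{-1} \in W_K$; the identity $td = ds$ together with these two length formulas forces $\ell(t) = 1$, so $t$ is a simple reflection lying in $K$. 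This shows $d J d^{-1} \subseteq K$, and applying the symmetric argument to $d^{-1}$ (regarded as the minimal length element of $W_J d^{-1} W_K$) gives the reverse inclusion, whence ${}^d J = K$. The principal subtlety is the appeal to length-additivity (Kilmoyer's lemma) for minimal length double coset representatives; once that is in hand the rest of the argument is essentially a single line.
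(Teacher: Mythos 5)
The paper records this theorem as ``well-known'' and gives no proof of its own, so there is no internal argument to compare against; what I can do is vouch for your proposal on its own merits, and it is correct. Your two easy directions are right: $(ii)\Rightarrow(i)$ because $wW_Jw^{-1} = W_{{}^wJ}$, and $(ii)\Rightarrow(iii)$ because ${}^w c_J$ is a Coxeter element of $W_K$, hence conjugate to $c_K$ via cyclic shift. Your $(iii)\Rightarrow(i)$ argument is the standard geometric one: $V^{c_J}=V^{W_J}$ because each irreducible factor of $c_J$ has no eigenvalue $1$ on the span of its roots, the pointwise stabiliser of $V^{W_J}$ in $W$ is exactly $W_J$ (Steinberg's theorem plus the fact that a positive root supported on $\{\alpha_j : j\in J\}$ is a root of $\Phi_J$), and conjugating fixed spaces therefore conjugates the parabolics. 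Your $(i)\Rightarrow(ii)$ argument is also the standard one, and the key step is exactly right: once you replace $w$ by the unique minimal length element $d$ of the double coset $W_K w W_J$, the identity $td=ds$ with $t=dsd^{-1}\in W_K$ and the length-additivity $\ell(td)=\ell(t)+\ell(d)$, $\ell(du)=\ell(d)+\ell(u)$ force $\ell(t)=1$, so $dJd^{-1}\subseteq K$; symmetry (or simply $|J|=|K|$, since conjugate parabolics have equal rank) gives equality. One tiny quibble on attribution: the length-additivity you invoke is the elementary fact about distinguished single-coset representatives ($d$ lies in $X_K^{-1}$ and in $X_J$), which is weaker than the full Kilmoyer--Howlett lemma on double cosets, though $d$ being distinguished in both single cosets is of course a consequence of being the minimum of the double coset. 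Everything else is clean.
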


Let $\ell:W\to \Z_{\ge  0}$ be the length function for $(W,S)$. For each $J\subseteq S$, let $X_J$ be the distinguished set of left coset representatives of $W_J$ in $W$ consisting of minimal length representatives, that is, \[X_J=\{w\in W: \text{$\ell(ws)>\ell(w)$ for all $s\in J$}\}.\]  The set $X_J^{-1}=\{w^{-1}:w\in X_J\}$ is thus the distinguished set of minimal length right coset representatives of $W_J$ in $W$. For another subset $K\subseteq S$, $X_{JK}:=X_J^{-1}\cap X_K$ is therefore a set of double coset representatives of $(W_J,W_K)$ in $W$. We let \[x_J=\sum_{w\in X_J}w\in \Z W.\]  Solomon \cite{Solomon:1976a} showed that, over $\Z$, for $J,K\subseteq S$, \[x_Jx_K=\sum_{L\subseteq S}a_{JKL}x_L\] where $a_{JKL}$ is the number of elements $w\in X_{JK}$ such that $L=w^{-1}Jw\cap K$. Thus the $\Z$-span of the set $\{x_J:J\subseteq S\}$ is an $\Z$-subalgebra of $\Z W$ and it is called the Solomon descent algebra of $W$ (over $\Z$). We denote it $\Des_\Z$. Furthermore, $\Des_\Z$ is $\Z$-free of rank $2^{|S|}$. Since the structure constants $a_{JKL}$ are integers, for any integral domain $R$, considering $a_{JKL}\cdot 1_R$, we obtain the descent algebra $\Des_R:=R\otimes_\Z\Des_\Z$ which is considered as a subalgebra of $RW$. The set $\{x_J:J\subseteq S\}$ forms an $\k$-basis for $\Des_\k$. Let $Y_J$ and $Y^\circ_J$ be the subspaces of $\Des_\k$ spanned by $\{x_K:K\subseteq_WJ\}$ and $\{x_K:K\subsetneq_W J\}$ respectively. We have the following.

\begin{lemma}\label{L: 2-sided ideal Y} Let $J\subseteq S$. The subspaces $Y_J$ and $Y^\circ_J$ are two-sided ideals of $\Des_\k$. Furthermore, $Y^\circ_J\subsetneq Y_J$.
\end{lemma}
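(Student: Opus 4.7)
The plan is to exploit Solomon's multiplication formula $x_J x_K = \sum_L a_{JKL} x_L$, where $a_{JKL}$ counts $w \in X_{JK}$ with $L = w^{-1}Jw \cap K$, together with an antisymmetry property of the relation $\subseteq_W$.

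First I would record the following antisymmetry observation: if $J \subseteq_W K$ and $K \subseteq_W J$, then $J =_W K$. Indeed, each inclusion ${}^wJ \subseteq K$ forces $|J| \le |K|$, so the two inclusions give $|J| = |K|$, and then ${}^wJ \subseteq K$ with equal cardinality yields ${}^wJ = K$. A direct corollary is that $K \subsetneq_W J$ (meaning $K \subseteq_W J$ but $K \neq_W J$) together with $L \subseteq_W K$ and $L =_W J$ is impossible, because $L =_W J \subseteq_W L \subseteq_W K \subseteq_W J$ would force $K =_W J$.

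Next I would verify that $Y_J$ is a two-sided ideal. Take a basis element $x_K$ with $K \subseteq_W J$ and any $x_M$. In the expansion $x_K x_M = \sum_L a_{KML} x_L$, a summand contributes only when $L = w^{-1}Kw \cap M$ for some $w$, whence $wLw^{-1} \subseteq K$, so $L \subseteq_W K \subseteq_W J$ and $x_L \in Y_J$. For $x_M x_K = \sum_L a_{MKL} x_L$ the even simpler containment $L \subseteq K$ suffices, so again $L \subseteq_W J$. Exactly the same computation handles $Y_J^\circ$: starting from $K \subsetneq_W J$, we still get $L \subseteq_W K$ in either order of multiplication, so $L \subseteq_W J$, and the antisymmetry observation above rules out $L =_W J$. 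Hence $L \subsetneq_W J$ and $x_L \in Y_J^\circ$.

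Finally, for the strict containment $Y_J^\circ \subsetneq Y_J$, note that $J \subseteq_W J$ gives $x_J \in Y_J$, while $J \not\subsetneq_W J$; since $\{x_K : K \subseteq S\}$ is a $\k$-basis of $\Des_\k$ and $Y_J^\circ$ is spanned by a proper subset of the basis elements $\{x_K : K \subseteq_W J\}$ that span $Y_J$ (the element $x_J$ being omitted), we have $x_J \in Y_J \setminus Y_J^\circ$.

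The only real subtlety is the antisymmetry lemma for $\subseteq_W$, which controls the potential failure of $L \subsetneq_W J$ in the $Y_J^\circ$ case; everything else is a direct bookkeeping check on Solomon's product formula.
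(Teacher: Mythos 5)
Your proof is correct. The paper states this lemma without a written proof, and your argument supplies the natural verification: Solomon's formula forces any $x_L$ appearing in $x_K x_M$ or $x_M x_K$ to satisfy $L \subseteq_W K$, the chain $L \subseteq_W K \subseteq_W J$ gives $L \subseteq_W J$, and your cardinality-based antisymmetry of $\subseteq_W$ correctly rules out $L =_W J$ when $K \subsetneq_W J$; the strict containment then follows since the $x_K$ are a basis and $x_J$ appears in the spanning set of $Y_J$ but not of $Y_J^\circ$.
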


In \cite{Solomon:1976a}, Solomon also determined the radical $\Rad(\Des_\Q)$. Together with the work by Atkinson--van Willigenburg \cite{Atkinson/vanWilligenburg:1997a} (for the type $\mathbb{A}$ case) and  Atkinson--Pfeiffer--van Willigenburg \cite{Atkinson/Pfeiffer/vanWilligenburg:2002a} (for the general case), the radical of $\Des_\k$ is known. We now describe their results.

For each $J\subseteq S$, let $\varphi_J$ be the permutation character of the induced module from the trivial module for $W_J$ to $W$. Notice that $\varphi_J$ takes integer values on the elements of $W$. Let $\ccl_\Z$ be the $\Z$-span of the set $\{\varphi_J:J\subseteq S\}$. We have the well-known Mackey formula \[\varphi_J\varphi_K=\sum_{L\subseteq S}a_{JKL}\varphi_L\] where $a_{JKL}$'s are precisely the integers we have obtained earlier. Therefore, we obtain a homomorphism of $\Z$-algebras \[\theta:\Des_\Z\to \ccl_\Z\] given by $\theta(x_J)=\varphi_J$. Let $\ccl_{\k}$ be the $\k$-span of the set $\{\varphi^{\k}_J:J\subseteq S\}$ where $\varphi^{\k}_J(x)=\varphi_J(x)\cdot 1_\k$ for each $x\in W$. As such, tensoring with $\k$, the map $\theta$ reduces to an $\k$-algebra homomorphism $\theta_\k:\Des_\k\to \ccl_{\k}$ where $\theta_\k(x_J)=\varphi^{\k}_J$.

\begin{theorem}[{\cite[Theorem 3]{Solomon:1976a}, \cite[Theorem 3]{Atkinson/Pfeiffer/vanWilligenburg:2002a}}]\label{T: Sol Radical} The radical $\Rad(\Des_\k)$ is spanned by elements $x_J - x_K$ such that $J =_W K$ and together with elements $x_L$ such that $q\mid [\N_W(W_L):W_L]$. Furthermore, $\Rad(\Des_\k)=\Ker\theta_\k$. In particular, $\Des_\k$ is a basic algebra.
\end{theorem}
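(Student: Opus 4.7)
The plan is to establish both inclusions in $\Rad(\Des_\k)=\Ker\theta_\k$, from which the spanning description of the radical and the basic property follow.

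\textbf{One inclusion: the target is semisimple.} First I would show that $\ccl_\k$ is semisimple, in fact a product of copies of $\k$. Since $1=\varphi^\k_S\in\ccl_\k$, the algebra $\ccl_\k$ is a unital $\k$-subalgebra of the commutative algebra of $\k$-valued class functions on $W$, which is a direct product of copies of $\k$ (one per conjugacy class of $W$) and hence reduced. Any finite-dimensional unital $\k$-subalgebra of such a product is reduced and commutative, hence semisimple; moreover, it corresponds to an equivalence relation on the set of conjugacy classes (given by the kernels of the coordinate projections) and is therefore itself a product of copies of $\k$. Surjectivity of $\theta_\k$ onto this semisimple algebra forces $\theta_\k(\Rad(\Des_\k))\subseteq\Rad(\ccl_\k)=0$. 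Once the reverse inclusion is proved, $\Des_\k/\Rad(\Des_\k)\cong\ccl_\k$ is a product of copies of $\k$, so $\Des_\k$ is basic.

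\textbf{Other inclusion: a dimension count.} The reverse inclusion $\Ker\theta_\k\subseteq\Rad(\Des_\k)$ is equivalent to the equality $\dim_\k\ccl_\k=\dim_\k\Des_\k/\Rad(\Des_\k)$, since both quotients are semisimple and there is a canonical surjection $\Des_\k/\Rad(\Des_\k)\twoheadrightarrow\ccl_\k$. I would bound $\dim_\k\ccl_\k$ from below using Solomon's triangular formula: letting $\Lambda$ index $W$-conjugacy classes of subsets of $S$ with representatives $\{J_\lambda\}$, the matrix $M=(\varphi_{J_\lambda}(c_{J_\mu}))_{\lambda,\mu\in\Lambda}$ satisfies $M_{\lambda,\mu}=0$ unless $J_\mu\subseteq_W J_\lambda$, with diagonal entries $M_{\lambda,\lambda}=[\N_W(W_{J_\lambda}):W_{J_\lambda}]$. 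This is obtained by computing $\varphi_J(c_K)$ as the number of cosets $gW_J$ fixed by $c_K$ and using the (nontrivial) fact that the minimal standard parabolic of $W$ containing the Coxeter element $c_K$ of $W_K$ is $W_K$ itself. Reducing $M$ mod $q$, its rank is at least $g$, the number of $\lambda\in\Lambda$ with $q\nmid[\N_W(W_{J_\lambda}):W_{J_\lambda}]$; hence $\dim_\k\ccl_\k\ge g$.

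\textbf{The spanning set lies in the radical and has the right dimension.} The elements $x_J-x_K$ with $J=_WK$ lie in $\Ker\theta_\k$ since conjugate parabolics yield isomorphic permutation representations, so $\varphi_J=\varphi_K$ as $\Z$-characters. For $x_L$ with $q\mid[\N_W(W_L):W_L]$: I would exploit the filtration by the two-sided ideals $Y_\lambda\supseteq Y_\lambda^\circ$ of \Cref{L: 2-sided ideal Y}, whose successive quotients $Y_\lambda/Y_\lambda^\circ$ are spanned by $\{\bar x_M:M\in\lambda\}$. A double-coset count gives $a_{J,K,K}=[\N_W(W_\lambda):W_\lambda]$ for $J,K\in\lambda$, independent of the representatives, because any class-$\lambda$ term $L=w^{-1}Jw\cap K$ in the product $x_Jx_K$ is forced to equal $K$ on cardinality grounds. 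Consequently in $Y_\lambda/Y_\lambda^\circ$ we have $x_L^{\,2}\equiv[\N_W(W_\lambda):W_\lambda]\,x_L\equiv 0$ and right-multiplication by $x_L$ annihilates differences $\bar x_J-\bar x_K$; combined with the analogous left-multiplication identity, the $\k$-span $I$ of the proposed generating set acts trivially on every subquotient of the filtration. Thus $I$ is a nilpotent ideal, contained in $\Rad(\Des_\k)\subseteq\Ker\theta_\k$. Finally, $\dim_\k I=(2^{|S|}-|\Lambda|)+(|\Lambda|-g)=2^{|S|}-g$, so $\dim_\k\Ker\theta_\k\ge 2^{|S|}-g$, which together with $\dim_\k\ccl_\k\ge g$ and the rank--nullity relation forces all inequalities to equalities. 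This yields $I=\Rad(\Des_\k)=\Ker\theta_\k$ and completes the proof. The main obstacle is Solomon's triangular character formula together with the structure-constant identity $a_{J,K,K}=[\N_W(W_\lambda):W_\lambda]$, both of which rest on the Coxeter-group fact that the Coxeter element $c_K$ of $W_K$ sits inside no proper standard parabolic of $W_K$.
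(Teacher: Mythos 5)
Your argument is correct, and it is essentially the standard proof of this result: the paper itself gives no proof but quotes it from Solomon and Atkinson--Pfeiffer--van Willigenburg, whose arguments run along exactly the lines you reconstruct (semisimplicity of the character algebra, triangularity of the parabolic table of marks, nilpotence of the proposed spanning set via the filtration by the ideals $Y_{\le i}^\circ\subsetneq Y_{\le i}$, and a dimension count). The only thin spot in your write-up is the nilpotence step: you verify the action of the generators only on the subquotient attached to their own Coxeter class, whereas to conclude that the ideal they generate is nilpotent you need, say, left multiplication by $x_L$ to kill \emph{every} subquotient, i.e.\ $x_Lx_M\equiv \beta_{LM}x_M$ with $q\mid\beta_{LM}$ for $M$ in an arbitrary class; this follows immediately from the divisibility $\beta_{LL}\mid\beta_{LM}$ of \Cref{L:beta}(i) together with $\varphi_J=\varphi_K$ for $J=_WK$, so the gap is cosmetic rather than substantive.
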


Suppose now that $n>0$ and let $\I(n)$ be the set consisting of pairs $(\un{J},\un{K})$ of sequences $\un{J}=(J_1,\ldots,J_n)$ and $\un{K}=(K_1,\ldots,K_n)$ of subsets $S$ such that $J_i=_W K_i$ for each $i\in [1,n]$. In this case, we write $\un{J}=_W\un{K}$. For $(\un{J},\un{K})\in \I(n)$, denote \[x_{\un{J},\un{K}}=\prod_{i=1}^n(x_{J_i}-x_{K_i})\in \Rad^n(\Des_\k).\]  For example, if $n=1$ and $J=_WK$, we have $x_{(J),(K)}=x_J-x_K\in\Rad(\Des_\k)$. By \Cref{T: Sol Radical}, if $q\nmid |W|$, we have \[\Rad^n(\Des_\k)=\Sp_\k\{x_{\un{J},\un{K}}:(\un{J},\un{K})\in \I(n)\}.\]

Therefore, we have the following lemma.

\begin{lemma}\label{L: basis for J^r} Let $(W,S)$ be a Coxeter group, $n$ be a positive integer and suppose that $q\nmid |W|$. There is a subset $T\subseteq \I(n)$ such that $\{x_{\un{J},\un{K}}:(\un{J},\un{K})\in T\}$ forms a basis for $\Rad^n(\Des_\k)$.
\end{lemma}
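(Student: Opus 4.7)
The plan is to observe that this lemma is essentially a linear-algebra extraction statement, given the spanning description of $\Rad^n(\Des_\k)$ already recorded in the paragraph preceding the lemma.

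First I would recall that, under the hypothesis $q\nmid |W|$, Theorem \ref{T: Sol Radical} together with the fact that $|W_L|$ divides $|W|$ (so $q\nmid[\N_W(W_L):W_L]$ is automatic) gives
\[ \Rad(\Des_\k)=\Sp_\k\{x_J-x_K : J,K\subseteq S,\ J=_WK\}. \]
Since $\Rad(\Des_\k)$ is a two-sided ideal, $\Rad^n(\Des_\k)$ is spanned by products of $n$ elements taken from any spanning set of $\Rad(\Des_\k)$, so
\[ \Rad^n(\Des_\k)=\Sp_\k\bigl\{x_{\un J,\un K} : (\un J,\un K)\in \I(n)\bigr\}, \]
which is exactly the identity stated in the excerpt just before the lemma.

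Next, I would invoke the elementary fact that every spanning set of a finite-dimensional $\k$-vector space contains a basis. Because $\Des_\k$ has $\k$-dimension $2^{|S|}$, its subspace $\Rad^n(\Des_\k)$ is also finite-dimensional, so from the indexed spanning family $\{x_{\un J,\un K}\}_{(\un J,\un K)\in\I(n)}$ one may extract a subset $T\subseteq\I(n)$ such that $\{x_{\un J,\un K} : (\un J,\un K)\in T\}$ is $\k$-linearly independent and still spans $\Rad^n(\Des_\k)$, i.e.\ forms a basis. Concretely, one processes the (finite) indexing set $\I(n)$ in any fixed order and at each step discards $(\un J,\un K)$ if $x_{\un J,\un K}$ lies in the $\k$-span of the elements kept so far; the remaining indices form the required $T$.

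There is no real obstacle here: once the spanning statement from Theorem \ref{T: Sol Radical} is in hand, the lemma is a one-line consequence of finite-dimensionality. The only mild subtlety worth flagging in writing is checking that the hypothesis $q\nmid|W|$ indeed eliminates the second family of radical generators in Theorem \ref{T: Sol Radical}, which is immediate because $[\N_W(W_L):W_L]$ divides $|W|$.
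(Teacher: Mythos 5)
Your proof is correct and matches what the paper intends: the paper displays the spanning identity $\Rad^n(\Des_\k)=\Sp_\k\{x_{\un J,\un K}:(\un J,\un K)\in\I(n)\}$ and then states the lemma with no further argument, treating the extraction of a basis from a finite spanning set as immediate. You have simply written out that one-line extraction and also spelled out why $q\nmid|W|$ kills the second family of radical generators in Theorem~\ref{T: Sol Radical} (since $[\N_W(W_L):W_L]$ divides $|W|$), which is a correct and worthwhile detail to record.
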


Consider the action of $W$ on the subsets of $S$. The equivalence classes are called the Coxeter classes of $W$. Let $\R$ be a complete set of representatives of the Coxeter classes and we also call the elements in $\R$ Coxeter classes. We fix a total order $\Rleq$ for $\R$ such that, if $J\subseteq_W K$, we have $J\Rleq K$. For $J,K\subseteq S$, we define the mark and the parabolic table of marks of $W$ as $\beta_{JK}=|\Fix_{W_K}(W/W_J)|$ and $M=(\beta_{JK})_{J,K\in\R}$ respectively.

\begin{lemma}
[{\cite[Proposition 2.4.4]{Geck/Pfeiffer:2000a}
and \cite[Lemma 1]{Atkinson/Pfeiffer/vanWilligenburg:2002a}}]\label{L:beta}
Let $(W,S)$ be a Coxeter system and $J,K$ be subsets of $S$. We have \[\beta_{JK}=\varphi_J(c_K)=a_{JKK}=[\N_W(W_J):W_J]\cdot |\{W^\omega_J:\omega\in W,\ W_K\subseteq W_J^\omega\}|.\] Furthermore, we have the following:
\begin{enumerate}[\rm (i)]
\item $\beta_{JJ}=[\N_W(W_J):W_J]$ and $\beta_{JJ}\mid \beta_{JK}$.
\item If $\beta_{JK}\neq 0$, then $K\subseteq_W J$. In particular, the matrix $M$ is lower triangular.
\end{enumerate}
\end{lemma}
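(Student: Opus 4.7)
The plan is to establish the four-way chain of equalities
$\beta_{JK}=\varphi_J(c_K)=a_{JKK}=[\N_W(W_J):W_J]\cdot|\{W_J^\omega:\omega\in W,\ W_K\subseteq W_J^\omega\}|$, from which (i) and (ii) follow as immediate corollaries. Since $\varphi_J$ is the permutation character of $W$ acting by left multiplication on $W/W_J$, the fixed-point formula gives $\varphi_J(g)=|\{xW_J:x^{-1}gx\in W_J\}|$ for every $g\in W$. The inclusion $\{xW_J:x^{-1}W_Kx\subseteq W_J\}\subseteq\{xW_J:x^{-1}c_Kx\in W_J\}$ is trivial, yielding $\beta_{JK}\le\varphi_J(c_K)$; the reverse inclusion is the only non-formal step and is deferred to the last paragraph.

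Next I would identify $a_{JKK}$ with $\beta_{JK}$. By definition, $a_{JKK}$ counts $w\in X_{JK}$ satisfying $K\subseteq w^{-1}Jw$, equivalently $W_K\subseteq w^{-1}W_Jw$, equivalently $W_JwW_K=W_Jw$. Since $X_{JK}$ parametrises the $(W_J,W_K)$-double cosets, inversion sets up a bijection between such double cosets and the left cosets $xW_J$ stabilised by $W_K$, which is the count defining $\beta_{JK}$. For the final equality, the map $xW_J\mapsto xW_Jx^{-1}$ (writing $\omega=x^{-1}$) sends the set of $W_K$-fixed cosets surjectively onto $\{W_J^\omega:W_K\subseteq W_J^\omega\}$, and the equivalence $xW_Jx^{-1}=yW_Jy^{-1}\Leftrightarrow y^{-1}x\in\N_W(W_J)$ shows that every fibre has size $[\N_W(W_J):W_J]$.

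Part (i) is then immediate by specialising $K=J$: the set $\{W_J^\omega:W_J\subseteq W_J^\omega\}$ reduces to $\{W_J\}$ by finiteness, so $\beta_{JJ}=[\N_W(W_J):W_J]$, and the divisibility $\beta_{JJ}\mid\beta_{JK}$ is then visible in the factorisation. For (ii), if $\beta_{JK}\ne 0$ then $a_{JKK}\ne 0$, and any $w\in X_{JK}$ with $w^{-1}Jw\cap K=K$ satisfies: every $k\in K$ is of the form $w^{-1}jw$ for some $j\in J$, so $wKw^{-1}\subseteq J$, which is exactly $K\subseteq_W J$; consequently $M$ is lower triangular once $\R$ is ordered compatibly with $\subseteq_W$.

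The main obstacle is the reverse inclusion $\varphi_J(c_K)\le\beta_{JK}$, i.e., the implication $x^{-1}c_Kx\in W_J\Rightarrow x^{-1}W_Kx\subseteq W_J$. This is a genuine Coxeter-theoretic input. The cleanest route is via the parabolic closure of an element: every $g\in W$ lies in a unique smallest parabolic subgroup of $W$, and for $g=c_K$ this closure is $W_K$ itself (a Coxeter element of $W_K$ is not contained in any proper parabolic of $W_K$). Applied to the parabolic $xW_Jx^{-1}$, which contains $c_K$ by hypothesis, this forces $W_K\subseteq xW_Jx^{-1}$, as required. This is precisely the content of \cite[Proposition 2.4.4]{Geck/Pfeiffer:2000a}.
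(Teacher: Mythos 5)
Your proof is correct. Note that the paper does not supply an argument for this lemma: it is stated with citations to Geck--Pfeiffer (Prop.\ 2.4.4) and Atkinson--Pfeiffer--van Willigenburg (Lemma 1), so there is no ``paper proof'' to compare against. What you have done is reconstruct a self-contained proof of the cited facts, and the reconstruction is sound.

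A few remarks on the content. The chain of elementary identifications (fixed-point count $\leftrightarrow$ double cosets that degenerate to a single right coset $\leftrightarrow$ conjugates of $W_J$ containing $W_K$ counted with multiplicity $[\N_W(W_J):W_J]$) is carried out correctly, including the verification that the map $xW_J\mapsto xW_Jx^{-1}$ is well-defined on cosets and has fibres of the stated size, and the use of finiteness for $K=J$. You have also correctly isolated the one genuinely Coxeter-theoretic step, namely the implication $x^{-1}c_Kx\in W_J\Rightarrow x^{-1}W_Kx\subseteq W_J$, and correctly reduced it to the fact that a Coxeter element of $W_K$ does not lie in any proper parabolic subgroup of $W_K$ (equivalently, the parabolic closure of $c_K$ is $W_K$), which is exactly what Geck--Pfeiffer Prop.\ 2.4.4 encodes. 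In short: the proposal fills in what the paper delegates to the literature, and does so along the intended lines.
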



Let $\R_q$ be the subset of $\R$ consisting of $J\in\R$ such that $q\nmid \beta_{JJ}=[\N_W(W_J):W_J]$ and $M_\k$ be the entry-wise modulo $q$ of the principal submatrix of $M$ labelled by $\R_q$, that is, for $J,K\in\R_q$, \[(M_\k)_{J,K}=\beta_{JK}\cdot 1_k=\varphi^{\k}_J(c_K).\] Notice that $M_\k$ is also lower triangular and $\R_0=\R$. We call the elements in $\R_q$ the $q$-Coxeter classes. For example, in the $\mathbb{A}_{n-1}$ case, $\R$ and $\R_q$ are labelled by partitions and $q$-regular partitions of $n$ respectively. 


\begin{lemma}\label{L: qCox max} Let $J\in\R_q$ and $I\in \R$ such that $\varphi^{\k}_K(c_I)=\varphi^{\k}_K(c_J)$ for all $K\subseteq S$. Then $I\subseteq_W J$.
\end{lemma}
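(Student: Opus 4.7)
The plan is to specialize the hypothesis to a single well-chosen value of $K$ and read off the conclusion from the lower triangularity of the parabolic table of marks.

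First I would unpack what the hypothesis says in terms of marks. Since $\beta_{KL} = \varphi_K(c_L)$ by \Cref{L:beta}, the assumption $\varphi^\k_K(c_I) = \varphi^\k_K(c_J)$ for every $K \subseteq S$ is equivalent to the congruence
\[ \beta_{KI} \equiv \beta_{KJ} \pmod{q} \]
for every $K \subseteq S$; and because $\varphi_K$ only depends on the $W$-conjugacy class of $K$, it suffices to record this for $K \in \R$.

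The key step is simply to take $K = J$. This gives $\beta_{JI} \equiv \beta_{JJ} \pmod{q}$. Now the hypothesis $J \in \R_q$ means exactly that $q \nmid \beta_{JJ} = [\N_W(W_J) : W_J]$, so $\beta_{JJ} \cdot 1_\k \neq 0$, and hence $\beta_{JI} \cdot 1_\k \neq 0$ as well. In particular, $\beta_{JI}$ is a nonzero integer.

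Finally, I would invoke \Cref{L:beta}(ii): since $\beta_{JI} \neq 0$, we must have $I \subseteq_W J$, which is exactly the conclusion sought. There is really no serious obstacle here; the content of the lemma is that $q$-regularity of $J$ (i.e., $J \in \R_q$) is precisely the condition needed to read off a nonzero diagonal entry of $M_\k$ in row $J$ and then transfer it to the $I$-column via the equality of columns, after which the triangularity of $M$ forces $I \Rleq J$.
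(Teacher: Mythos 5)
Your proof is correct and is essentially the paper's own argument: specialize the hypothesis to $K = J$, use $J \in \R_q$ to see $\varphi^\k_J(c_I) = \varphi^\k_J(c_J) = \beta_{JJ} \cdot 1_\k \neq 0$, and conclude via \Cref{L:beta}(ii). The extra unpacking in terms of congruences of marks is fine but not needed beyond the single evaluation at $K=J$.
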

\begin{proof} Since $\varphi^{\k}_J(c_I)=\varphi^{\k}_J(c_J)\neq 0$, by \Cref{L:beta}, we have  $I\subseteq_W J$.
\end{proof}

Furthermore, we have the following lemma.

\begin{lemma}[{\cite[Lemmas 2, 3]{Atkinson/Pfeiffer/vanWilligenburg:2002a}}]\label{L: matrix M_\k} The matrix $M_\k$ is lower triangular of rank $|\R_q|$. Furthermore, the complete set of non-isomorphic simple $\Des_\k$-modules is labelled by $\R_q$ and defined by the columns of $M_\k$ in the sense that, for each $J\in \R_q$, the simple $\Des_\k$-module $S_{J,k}$ is one-dimensional such that $x\in \Des_\k$ acts via the multiplication by $\theta_\k(x)(c_J)\in \k$.
\end{lemma}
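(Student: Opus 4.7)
The plan is to derive all three assertions from Theorem~\ref{T: Sol Radical} together with the triangular form of $M$ recorded in Lemma~\ref{L:beta}. I would first establish the rank claim for $M_\k$, then construct the candidate simple modules directly from the columns of $M_\k$, and finally invoke the description of the radical to show that the construction exhausts the isomorphism classes.

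For the rank of $M_\k$: by Lemma~\ref{L:beta}(ii), $M$ is lower triangular with respect to the chosen total order on $\R$, and hence so is the principal $\R_q\times\R_q$ submatrix $M_\k$. Its diagonal entries are $\beta_{JJ}\cdot 1_\k = [\N_W(W_J):W_J]\cdot 1_\k$ for $J\in\R_q$, which by the very definition of $\R_q$ are all nonzero in $\k$. Thus $M_\k$ has full rank $|\R_q|$.

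For the one-dimensional modules: the multiplication on $\ccl_\Z$ transported from $\Des_\Z$ via $\theta$ agrees with pointwise multiplication of characters, since the Mackey formula matches the descent product with the character of the tensor product of permutation representations. Consequently, evaluation at any element of $W$ is an algebra homomorphism $\ccl_\k\to\k$, and composing with $\theta_\k$ yields an algebra homomorphism $\Des_\k\to\k$. For each $J\in\R_q$, I would set $S_{J,\k}$ to be the one-dimensional $\k$-module on which $x\in\Des_\k$ acts by multiplication by $\theta_\k(x)(c_J)$. Distinct $J,J'\in\R_q$ produce non-isomorphic such modules, since any isomorphism would force the $J$-th and $J'$-th columns of $M_\k$ to coincide, contradicting the rank already established.

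To see that these exhaust the isomorphism classes of simples, I would invoke Theorem~\ref{T: Sol Radical} once more: $\Rad(\Des_\k)$ already contains all $x_L$ with $L\notin\R_q$ and all differences $x_J-x_K$ with $J=_WK$. Thus $\Des_\k/\Rad(\Des_\k)$ is spanned by the images of $\{x_J:J\in\R_q\}$, giving $\dim_\k(\Des_\k/\Rad(\Des_\k))\le|\R_q|$. Since $\Des_\k$ is basic, the number of isomorphism classes of simple modules equals this dimension, and combining with the $|\R_q|$ non-isomorphic simples already constructed forces equality. The only step that really goes beyond formal manipulation of the matrix is the pointwise-multiplication interpretation that makes $c_J$-evaluation an algebra map on the image of $\theta_\k$; I expect this character-theoretic input to be the principal conceptual hurdle.
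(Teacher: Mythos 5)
The paper does not prove this lemma; it is imported as a citation to Atkinson--Pfeiffer--van Willigenburg. So there is no in-paper argument to compare against, and what you have produced is a self-contained derivation from results already recorded in the paper (Theorem~\ref{T: Sol Radical}, Lemma~\ref{L:beta}), which is a reasonable thing to attempt. Your argument is essentially correct: the triangularity-plus-nonzero-diagonal observation gives full rank; the pointwise-multiplication interpretation of $\ccl_\k$ is genuinely the conceptual input, and it is right (the Mackey formula matches the structure constants of $\Des_\Z$ with the pointwise product of permutation characters, and this persists after $\otimes_\Z \k$), so evaluation at $c_J$ really is an algebra homomorphism $\Des_\k\to\k$; distinctness of columns of a full-rank square matrix gives pairwise non-isomorphism; and Theorem~\ref{T: Sol Radical} bounds $\dim_\k(\Des_\k/\Rad)$ above by $|\R_q|$.

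One phrase deserves tightening. You write that ``since $\Des_\k$ is basic, the number of isomorphism classes of simple modules equals this dimension.'' For a basic algebra over a field that need not be algebraically closed, $\Des_\k/\Rad(\Des_\k)$ is a product of finite (commutative, here) field extensions of $\k$, and the number of simple modules equals the number of factors, which can be \emph{strictly less} than $\dim_\k(\Des_\k/\Rad)$ when some factor is a proper extension of $\k$. The claim as worded is therefore not automatic. Your argument is nonetheless saved by the count you already have: you produced $|\R_q|$ pairwise non-isomorphic one-dimensional simples, so there are at least $|\R_q|$ factors, each contributing at least one to the dimension, while the upper bound $\dim_\k(\Des_\k/\Rad)\le|\R_q|$ squeezes everything to equality, showing simultaneously that every factor is $\k$ and that there are exactly $|\R_q|$ of them. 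If you rephrase that sentence as a sandwich of inequalities rather than an equality read off from ``basic,'' the proof is clean.
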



For each $J\subseteq S$, let $\Char_{J,\k}$ denote the characteristic function on the Coxeter class labelled by $J$ over $\k$, that is, for each $K\in \R_q$, \[\Char_{J,\k}(c_K)=\left \{\begin{array}{ll} 1&\text{if $K=J$,}\\ 0&\text{otherwise.}\end{array}\right .\] By \Cref{L: matrix M_\k}, $\{\Char_{J,\k}:J\in\R_q\}$ forms a basis and complete set of primitive orthogonal idempotents for $\im \theta_\k$.



We now investigate the simple modules for the descent algebras further. Let $J\in\R$ and $S_{J,\Z}$ be the free $\Z$-module of rank 1 and let $x_K\in\Des_\Z$ act via the multiplication by $\theta(x_K)(c_J)=\varphi_K(c_J)$. Notice that $\k\otimes_\Z S_{J,\Z}\cong \k\otimes_\Z S_{J',\Z}$ as $\Des_\k$-modules if and only if $\varphi^{\k}_K(c_J)=\varphi^{\k}_K(c_{J'})$ for all $K\subseteq S$. Furthermore, if $J\in\R_q$, then $S_{J,\k}\cong \k\otimes_\Z S_{J,\Z}$. 

Let $A$ be a finite-dimensional $\k$-algebra. Recall that we have a contravariant exact functor \[\D(-)=\Hom_\k(-,\k):\text{\textsf{mod}-$A$}\to \text{$A$-\textsf{mod}}\] from the category of right $A$-modules to the category of left $A$-modules. More precisely, for a right $A$-module $M$ and $\xi\in\Hom_\k(M,\k)$, we have $(a\xi)(m)=\xi(ma)$ for all $a\in A$ and $m\in M$. Furthermore, $M$ is projective if and only if $\D(M)$ is injective.

Let $P_{J,\k}$ and $I_{J,\k}$ be the projective cover and injective hull of the simple module $S_{J,\k}$ respectively. We have the following proposition.

\begin{proposition}\label{P: proj cover} Let $J\in \R_q$ and $e\in \Des_\k$ be a primitive idempotent such that $\theta_\k(e)=\Char_{J,\k}$. Then $P_{J,\k}\cong \Des_\k e$ and $I_{J,\k}\cong \D(e\Des_\k)$.
\end{proposition}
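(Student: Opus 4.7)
The plan is to identify the head of $\Des_\k e$ with the simple module $S_{J,\k}$, so that $\Des_\k e$ must be its projective cover; for the injective statement, I will run the same argument on the right and then apply the duality $\D$.

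For the first isomorphism, I would start from the observation that, because $e$ is primitive and $\Des_\k$ is basic (\Cref{T: Sol Radical}), $\Des_\k e$ is an indecomposable projective left $\Des_\k$-module with one-dimensional head $\Des_\k e/\Rad(\Des_\k)e$. The main step is then to recognise this head as $S_{J,\k}$. Since $\Rad(\Des_\k)=\Ker\theta_\k$ by \Cref{T: Sol Radical}, the head is naturally a module for $\Des_\k/\Rad(\Des_\k)\cong\im\theta_\k$, and the image of $e$ in this quotient is $\Char_{J,\k}$ by hypothesis, so the head is identified with the left ideal $\im\theta_\k\cdot\Char_{J,\k}$. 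Now $\im\theta_\k$ is a commutative algebra with basis the orthogonal idempotents $\{\Char_{K,\k}:K\in\R_q\}$ (\Cref{L: matrix M_\k}), so that every $f\in\im\theta_\k$ satisfies $f\cdot\Char_{J,\k}=f(c_J)\cdot\Char_{J,\k}$. This shows the head is one-dimensional with $x\in\Des_\k$ acting as $\theta_\k(x)(c_J)$, which is exactly the defining action of $S_{J,\k}$ in \Cref{L: matrix M_\k}. Hence $\Des_\k e\cong P_{J,\k}$.

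For the injective hull, the same argument with sides reversed shows that $e\Des_\k$ is an indecomposable projective right $\Des_\k$-module whose head $e\Des_\k/e\Rad(\Des_\k)\cong\Char_{J,\k}\cdot\im\theta_\k$ is one-dimensional with right action $x\mapsto\theta_\k(x)(c_J)$; call this right module $S^{\mathrm{r}}_{J,\k}$. Since $\D$ is exact and sends indecomposable projective right modules to indecomposable injective left modules, $\D(e\Des_\k)$ is an indecomposable injective left $\Des_\k$-module with socle $\D(S^{\mathrm{r}}_{J,\k})$. A direct check using $(a\xi)(m)=\xi(ma)$ gives $\D(S^{\mathrm{r}}_{J,\k})\cong S_{J,\k}$ as left modules, and therefore $\D(e\Des_\k)\cong I_{J,\k}$.

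There is no real obstacle in this argument; the one point worth stressing is that the convenient identity $xe\equiv\theta_\k(x)(c_J)\cdot e$ is not an assertion inside $\Des_\k$ itself but a statement in the quotient $\Des_\k/\Rad(\Des_\k)\cong\im\theta_\k$, and it uses in an essential way that $\Char_{J,\k}$ is an idempotent of a commutative algebra of functions on $\{c_K:K\in\R_q\}$.
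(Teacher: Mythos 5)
Your proof is correct and follows essentially the same route as the paper: you identify the head of $\Des_\k e$ with $S_{J,\k}$ via the quotient $\Des_\k/\Rad(\Des_\k)\cong\im\theta_\k$ (using that $\Char_{J,\k}$ picks out evaluation at $c_J$), then repeat on the right and apply $\D$. The paper phrases the same computation slightly more directly by showing $(x_K-\varphi_K^\k(c_J))e\in\Ker\theta_\k$, but the underlying argument is identical.
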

\begin{proof} Let $P=\Des_ke$ and $K\subseteq S$. Since \[\theta_\k((x_K-\varphi_K^{\k}(c_J))e)=(\varphi^{\k}_K-\varphi^{\k}_K(c_J))\Char_{J,\k}=0,\] we have $(x_K-\varphi^{\k}_K(c_J))e\in\Ker\theta_\k=\Rad(\Des_k)$. As such, $x_K$ acts by multiplication by $\varphi_K^{\k}(c_J)$ on $Q:=P/\Rad(P)$ and hence $Q$ is isomorphic to $S_{J,\k}$. So $P\cong P_{J,\k}$.

Now consider the right $\Des_k$-module $P':=e\Des_k$. Similar as in the previous paragraph, $P'$ is the projective cover of the right simple $A$-module $T:=S'_{J,\k}$ where $T$ is one-dimensional spanned by $\{v\}$ and $v\cdot x_K=\varphi^{\k}_K(c_J)v$ for each $K\subseteq S$. Therefore $\D(P')$ is the injective hull of $\D(T)=\Hom_\k(T,\k)$. We only need to check that $\D(T)\cong S_{J,\k}$. Let $\{\xi\}$ be the dual basis of $\D(T)$ with respect to $\{v\}$. We have, for any $K\subseteq S$,  \[(x_K\xi)(v)=\xi(v\cdot x_K)=\xi(\varphi^{\k}_K(c_J)v)=\varphi^{\k}_K(c_J).\] Therefore, $x_K\xi=\varphi^{\k}_K(c_J)\xi$ and $\D(T)\cong S_{J,\k}$ as desired.
\end{proof}

\section{Idempotents for the descent algebras}\label{S:Idem}

In this section, we generalise the construction of the idempotents for the descent algebras in the type $\mathbb{A}$ case (see \cite{Lim:2023a}) to arbitrary Coxeter groups. Since the construction is similar, we omit some proofs and computational details but keep the notations as close as possible as in \cite{Lim:2023a}.

As before, throughout this section, $\k$ is a field of characteristic $q$ ($q=0$ or $q>0$). For each $J\in \R_q$, let \[\gamma_J=\beta_{JJ}\cdot 1_\k=\varphi^{\k}_J(c_J).\] By definition, $\gamma_J\neq 0$. Recall the total order $\Rleq$ we have fixed earlier for the set $\R$ (and hence on the subset $\R_q$). Let $M_\k^{-1}=(b_{JK})_{J,K\in \R_q}$ be the inverse matrix of $M_\k=(\varphi^{\k}_J(c_K))_{J,K\in\R_q}$. For each $J\in \R_q$, define
\begin{align}\label{Eq:fJ}
f_J=\sum_{K\in\R_q} b_{JK}x_K\in \Des_\k.
\end{align} Furthermore, recall  the two-sided ideals $Y^\circ_J$ and $Y_J$ of $\Des_\k$ as in  \Cref{L: 2-sided ideal Y} .

The proof of the following lemma is similar to that of \cite[Lemma 3.2]{Lim:2023a} and we shall leave it to the readers.

\begin{lemma}\label{L: f*} Let $(W,S)$ be a Coxeter system and let $J\in \R_q$.
\begin{enumerate}[\rm (i)]
\item We have $\theta_\k(f_J)=\Char_{J,\k}$.
\item For any $\R_q\ni K\not\subseteq_W J$, we have $b_{JK}=0$. In particular, \[\textstyle f_J=\frac{1}{\gamma_J}x_J+\sum_{\R_q\ni K\subsetneq_W J} b_{JK}x_K\in Y_J.\]
\item For any positive integer $r$, we have $(f_J)^r=\frac{1}{\gamma_J}x_J+\epsilon_{J,r}$ for some $\epsilon_{J,r}\in Y^\circ_J$.
\item We have $\sum_{J\in \R_q}f_J=1$.
\end{enumerate}
\end{lemma}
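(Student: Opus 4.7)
The plan is to parallel the type $\mathbb{A}$ argument of \cite{Lim:2023a}, using only the triangularity of $M_\k$ together with Solomon's product formula.

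For (i), I would apply $\theta_\k$ to~\eqref{Eq:fJ} and evaluate at $c_L$ with $L\in\R_q$: since $\varphi^\k_K(c_L)=(M_\k)_{K,L}$, this gives $\theta_\k(f_J)(c_L)=\sum_{K\in\R_q}(M_\k^{-1})_{J,K}(M_\k)_{K,L}=\delta_{J,L}=\Char_{J,\k}(c_L)$. Since by \Cref{L: matrix M_\k} the $\Char_{J,\k}$ form a basis of $\im\theta_\k$, agreement on the $c_L$ forces $\theta_\k(f_J)=\Char_{J,\k}$. For (ii), I would invoke \Cref{L:beta}: $\beta_{JK}\ne 0$ forces $K\subseteq_W J$, so $(M_\k)_{J,K}=0$ whenever $K\not\Rleq J$ in the fixed total order; hence $M_\k$ is lower triangular on $\R_q$, and its inverse is then lower triangular with the same support, with diagonal entries inverted scalarly. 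This gives $b_{JK}=0$ for $K\not\subseteq_W J$ and $b_{JJ}=1/\gamma_J$, and combined with the definition of $Y_J$ from \Cref{L: 2-sided ideal Y} yields both the explicit expression and $f_J\in Y_J$.

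For (iii), the plan is to work modulo the two-sided ideal $Y^\circ_J$ (\Cref{L: 2-sided ideal Y}). By (ii) we have $f_J\equiv\gamma_J^{-1}x_J\pmod{Y^\circ_J}$. Solomon's product formula combined with \Cref{L:beta} gives $a_{JJJ}=\beta_{JJ}$, and for $L\ne J$ in the support of $x_J^2$ one has $L=w^{-1}Jw\cap J\subsetneq J$ (strict by cardinality), so $L\subsetneq_W J$; hence $x_J^2=\gamma_J x_J+z$ with $z\in Y^\circ_J$. A straightforward induction on $r$ then yields $x_J^r\equiv\gamma_J^{r-1}x_J\pmod{Y^\circ_J}$, and therefore $f_J^r\equiv\gamma_J^{-r}x_J^r\equiv\gamma_J^{-1}x_J\pmod{Y^\circ_J}$, as required.

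For (iv), I would expand $\sum_{J\in\R_q}f_J=\sum_{K\in\R_q}\bigl(\sum_{J\in\R_q}b_{J,K}\bigr)x_K$ and use that $S\in\R_q$ (its normalizer quotient is trivial) together with the observation that the $S$-row of $M_\k$ is the all-ones vector, because $\varphi_S$ is the trivial character of $W$. Hence $e_S^{T}M_\k=\mathbf{1}^{T}$, giving $\mathbf{1}^{T}M_\k^{-1}=e_S^{T}$, i.e.\ $\sum_J b_{J,K}=\delta_{S,K}$, and the sum collapses to $x_S=1$. The main bookkeeping obstacle I anticipate is in (iii), where one must confirm that the correction term $z$ in $x_J^2=\gamma_J x_J+z$ lies strictly below $J$ in the Coxeter-class order; this is precisely what the cardinality inequality $|w^{-1}Jw\cap J|<|J|$ (for $w^{-1}Jw\ne J$) delivers.
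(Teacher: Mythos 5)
Your argument correctly parallels the type~$\mathbb{A}$ proof of \cite[Lemma~3.2]{Lim:2023a} that the paper points to, and parts (i), (iii) and (iv) are sound as written. In particular, (iii) correctly isolates the key combinatorial input, that for $L\neq J$ in the support of $x_J^2$ one has $L=w^{-1}Jw\cap J\subsetneq J$, so $|L|<|J|$ and hence $L\subsetneq_W J$; and (iv) correctly uses that the $S$-row of $M_\k$ is constant equal to $1$ (since $\varphi_S$ is the trivial character and $S\in\R_q$ always, as $\beta_{SS}=1$), giving $\mathbf{1}^{T}M_\k^{-1}=e_S^{T}$ and hence $\sum_J f_J = x_S = 1$.

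The one place you should tighten is (ii). You correctly observe from \Cref{L:beta} that $(M_\k)_{JK}\neq 0$ forces $K\subseteq_W J$. But then you pass to ``$M_\k$ is lower triangular in the fixed total order, so $M_\k^{-1}$ is lower triangular with the same support.'' Lower-triangularity in the total order $\Rleq$ only gives $b_{JK}=0$ for $K\not\Rleq J$, which is strictly weaker than $b_{JK}=0$ for $K\not\subseteq_W J$: the total order $\Rleq$ refines $\subseteq_W$, so there can be pairs with $K\Rleq J$ but $K\not\subseteq_W J$. What you actually need, and what your conclusion silently uses, is \emph{poset}-triangularity: $M_\k$ lies in the incidence algebra of the poset $(\R_q,\subseteq_W)$, i.e.\ $(M_\k)_{JK}\neq 0\Rightarrow K\subseteq_W J$, and since the diagonal entries $\gamma_J$ are nonzero one can write $M_\k=D(1+N)$ with $D$ diagonal and $N$ strictly poset-lower-triangular (hence nilpotent), so $M_\k^{-1}=\bigl(\sum_{k\ge 0}(-N)^k\bigr)D^{-1}$ again lies in the incidence algebra. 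That, not total-order triangularity alone, is what yields $b_{JK}=0$ for $K\not\subseteq_W J$ and $b_{JJ}=1/\gamma_J$. With that clarification the proof is complete and matches the intended argument.
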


As $\theta_\k(f_J)=\Char_{J,\k}$ and $\Ker\theta_\k=\Rad(\Des_\k)$, we identify $\Des_\k/\Rad(\Des_\k)$ with $\ccl_{\k}$ via the $\k$-algebra epimorphism $\theta_\k$ and can lift and orthogonalise the set \[\{f_J+\Rad(\Des_\k):J\in\R_q\}\] to obtain a complete set of orthogonal primitive idempotents for $\Des_\k$. Let $m=|\R_q|$. We denote \[\R_q=\{J_i:i \in [1,m]\},\] where $J_i<J_{i+1}$ for each $i\in [1,m-1]$ and define $f_i=f_{J_i}$, $f_{\ge  i}=\sum_{j\ge  i} f_j$, $\Char_i=\Char_{J_i,\k}$, $\Char_{\ge  i}=\sum_{j\ge  i}\Char_i$, $\gamma_i=\gamma_{J_i}$,
$Y_{\le i}=\sum_{k\in [1,i]} Y_{J_k}$ and $Y^\circ_{\le i}=Y^\circ_{J_i}+\sum_{k\in [1,i-1]} Y_{J_k}$ one for each $i\in [1,m]$. Notice that $J_m=S$.

By \Cref{L: 2-sided ideal Y,L:beta}, we have the following result.

\begin{lemma}\label{L: Y0} Let $i\in [1,m]$. We have
\begin{enumerate}[\rm (i)]
\item $Y^\circ_{\le i}$ and $Y_{\le i}$ are two-sided ideals of $\Des_\k$,
\item $x_{J_i}^r=\gamma_i^{r-1}x_{J_i}(\mod Y^\circ_{\le i})$ for any positive integers $r$, and
\item $Y^\circ_{\le 1}\subsetneq Y_{\le 1}\subsetneq Y^\circ_{\le 2}\subsetneq Y_{\le 2}\cdots \subsetneq Y^\circ_{\le m}\subsetneq Y_{\le m}=\Des_\k$.
\end{enumerate}
\end{lemma}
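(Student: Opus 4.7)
The plan is as follows. Part (i) is immediate: by \Cref{L: 2-sided ideal Y}, each $Y_{J_k}$ and $Y^\circ_{J_i}$ is a two-sided ideal of $\Des_\k$, and finite sums of two-sided ideals remain two-sided ideals.

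For part (ii), I would proceed by induction on $r \ge 1$, the base case $r=1$ being trivial. The key step is computing $x_{J_i}^2$ via Solomon's multiplication formula $x_J x_K = \sum_L a_{JKL} x_L$, where $a_{JKL}$ counts $w \in X_{JK}$ such that $L = w^{-1}Jw \cap K$. Every $L$ appearing in $x_{J_i}^2$ is a subset of $J_i$; moreover $L =_W J_i$ forces $|L|=|J_i|$ and hence $L = J_i$, while every other $L$ satisfies $L \subsetneq_W J_i$ and contributes an element of $Y^\circ_{J_i} \subseteq Y^\circ_{\le i}$. The coefficient of $x_{J_i}$ is $a_{J_i J_i J_i} = \beta_{J_i J_i}$ by \Cref{L:beta}, whose image in $\k$ is $\gamma_i$. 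Thus $x_{J_i}^2 \equiv \gamma_i x_{J_i} \pmod{Y^\circ_{\le i}}$. For the inductive step, multiplying $x_{J_i}^r \equiv \gamma_i^{r-1} x_{J_i}$ on the right by $x_{J_i}$ and using that $Y^\circ_{\le i}$ is a right ideal (part (i)) yields $x_{J_i}^{r+1} \equiv \gamma_i^r x_{J_i} \pmod{Y^\circ_{\le i}}$.

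For part (iii), the inclusions $Y^\circ_{\le i} \subseteq Y_{\le i} \subseteq Y^\circ_{\le i+1}$ are immediate from the definitions (using $Y^\circ_{J_i} \subseteq Y_{J_i}$ and $Y^\circ_{\le i+1} = Y^\circ_{J_{i+1}} + Y_{\le i}$), and the equality $Y_{\le m} = \Des_\k$ follows since $J_m = S$ is the maximum of $\Rleq$ (with $S \in \R_q$ because $\beta_{SS} = 1$), so $Y_{J_m} = \Sp_\k\{x_K : K \subseteq S\} = \Des_\k$. The strict inclusion $Y^\circ_{\le i} \subsetneq Y_{\le i}$ is witnessed by $x_{J_i}$: since $\{x_K : K \subseteq S\}$ is a basis of $\Des_\k$, the element $x_{J_i}$ lies in $Y^\circ_{\le i}$ only if $J_i$ appears among the indexing sets of the spanning basis elements, i.e., $J_i \subsetneq_W J_i$ (false) or $J_i \subseteq_W J_k$ for some $k < i$ (violating strict monotonicity of $\Rleq$ restricted to $\R_q$).

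The main obstacle is the strict inclusion $Y_{\le i} \subsetneq Y^\circ_{\le i+1}$, which amounts to $Y^\circ_{J_{i+1}} \not\subseteq Y_{\le i}$. For any $K \subsetneq_W J_{i+1}$ whose Coxeter class already lies in $\R_q$, that class must equal some $J_k$ with $k \le i$ (by compatibility of $\Rleq$ with $\subseteq_W$), and then $x_K \in Y_{J_k} \subseteq Y_{\le i}$, so it cannot witness strictness. The argument therefore hinges on exhibiting a subset $K \subsetneq_W J_{i+1}$ whose Coxeter class is \emph{$q$-singular} (lies in $\R \setminus \R_q$) and is not $\subseteq_W$-dominated by any $J_k$ with $k \le i$. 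I expect this step to require a delicate combinatorial analysis of the $q$-singular classes lying strictly below $J_{i+1}$ in the partial order $\subseteq_W$, possibly assisted by a judicious refinement of the total order $\Rleq$ relative to the $q$-singular part of $\R$.
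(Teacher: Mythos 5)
Parts (i) and (ii) of your proof are correct. For (i), finite sums of two-sided ideals are two-sided ideals, and \Cref{L: 2-sided ideal Y} supplies each summand; this is exactly the intended argument. For (ii), the base case and inductive step are fine: Solomon's formula gives that every $L$ appearing in $x_{J_i}^2$ with nonzero coefficient is a subset of $J_i$, hence $L\subseteq_W J_i$; if $L$ is a \emph{proper} subset then $|L|<|J_i|$ forces $L\subsetneq_W J_i$, so $x_L\in Y^\circ_{J_i}\subseteq Y^\circ_{\le i}$; the coefficient of $x_{J_i}$ is $a_{J_iJ_iJ_i}=\beta_{J_iJ_i}$ by \Cref{L:beta}, whose image is $\gamma_i$. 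Using that $Y^\circ_{\le i}$ is a right ideal completes the induction. This matches what the authors had in mind (the paper offers no written proof, only the pointer to \Cref{L: 2-sided ideal Y,L:beta}).

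For part (iii), the easy inclusions $Y^\circ_{\le i}\subseteq Y_{\le i}\subseteq Y^\circ_{\le i+1}$, the strictness $Y^\circ_{\le i}\subsetneq Y_{\le i}$ via the witness $x_{J_i}$, and the equality $Y_{\le m}=\Des_\k$ are all correctly handled. But you should not have agonized over the strict inclusion $Y_{\le i}\subsetneq Y^\circ_{\le i+1}$: your own observation already shows it is \emph{false} in general. As you note, any $K\subsetneq_W J_{i+1}$ whose Coxeter class lies in $\R_q$ contributes $x_K\in Y_{\le i}$. In particular, whenever $\R_q=\R$ (e.g.\ $q=0$, or more generally $q\nmid|W|$), \emph{every} $K\subsetneq_W J_{i+1}$ has $q$-regular class, so $Y^\circ_{J_{i+1}}\subseteq Y_{\le i}$ and $Y^\circ_{\le i+1}=Y_{\le i}$. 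Concretely, for type $\mathbb{A}_2$ with $q=0$ one has $\R_0=\{\emptyset,\{s_1\},S\}$ and $Y_{\le 1}=Y^\circ_{\le 2}=\Sp_\k\{x_\emptyset\}$. (When $q\mid|W|$ the strict inclusion may or may not hold; e.g.\ in $\mathbb{A}_3$ with $q=2$ the witness $x_{\{s_1,s_3\}}$ does give $Y_{\le 1}\subsetneq Y^\circ_{\le 2}$, but this is not a general phenomenon.) So there is no ``delicate combinatorial analysis'' to be found: the chain should be read with $\subseteq$ rather than $\subsetneq$ between $Y_{\le i}$ and $Y^\circ_{\le i+1}$, and inspection of the subsequent proofs (\Cref{L: f'} and \Cref{T: idem}) confirms that only the non-strict chain, the ideal property, and part (ii) are ever invoked.
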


Since we are dealing with the characteristic zero case ($q=0$) as well, we need to replace the idempotent-lifting procedure in \cite[Definition 3.5]{Lim:2023a} with the $(3a^2-2a^3)$-construction (see the proof of \cite[Theorem
  1.7.3]{Benson:1991a}) and provide proofs for both \Cref{L: f'} and \Cref{T: idem}. For this, we need to introduce some extra notation as follows.

\begin{proposition}\label{P: lift idem} Let $N$ be a nilpotent ideal of a finite-dimensional $\k$-algebra $A$ and $f\in A$ such that $f+N\in A/N$ is an idempotent. Let $a_1=f$ and, inductively, for $i\ge  1$, let \[a_{i+1}=3a_i^2-2a_i^3.\] Then there exists an integer $r$ such that $a_r$ is an idempotent of $A$ and thus $a_{r'}=a_r$ for any $r'\ge  r$. We denote $a_\infty=a_r$. Furthermore, $a_i$ commutes with $a_1$ for all $i\ge  1$ and \[(a_1a_\infty)_\infty=a_\infty.\]
\end{proposition}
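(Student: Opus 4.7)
The plan is to exploit the polynomial factorisation
\[
2y^3 - 3y^2 + 1 = (y-1)^2(2y+1),
\]
which after rearranging yields the key identity
\[
a_{i+1}^2 - a_{i+1} = -(a_i^2 - a_i)^2 (3 - 2a_i)(2a_i+1).
\]
Since $f + N$ is idempotent in $A/N$, we have $a_1^2 - a_1 = f^2 - f \in N$, so an induction based on this identity gives $a_i^2 - a_i \in N^{2^{i-1}}$ for every $i \ge 1$. Because $N$ is nilpotent, there is an $r$ with $a_r^2 = a_r$; then $a_r^3 = a_r$ and $a_{r+1} = 3a_r - 2a_r = a_r$, so $a_{r'} = a_r$ for all $r' \ge r$. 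This proves the stabilisation and defines $a_\infty$.

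Because each $a_{i+1}$ is a polynomial in $a_i$ with zero constant term, an easy induction shows $a_i \in \k[a_1]$ for every $i \ge 1$, and in particular every $a_i$ commutes with $a_1$. A parallel calculation gives the telescoping identity $a_{i+1} - a_i = -(a_i^2 - a_i)(2a_i - 1)$, which shows $a_{i+1} - a_i \in N$ and hence $a_\infty - a_1 \in N$; so $a_\infty$ reduces to the same idempotent as $f$ in $A/N$.

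For the final assertion, set $b_1 = a_1 a_\infty$ and form $b_{i+1} = 3b_i^2 - 2b_i^3$. Since $a_\infty \in \k[a_1]$, the entire sequence lies in the commutative subalgebra $R := \k[a_1]$, and $b_1 + (N \cap R)$ is idempotent in $R/(N \cap R)$ because $b_1 - a_\infty = (a_1 - a_\infty)a_\infty \in N$. The arguments of the first two paragraphs then apply verbatim inside $R$: the sequence stabilises at an idempotent $b_\infty \in R$, and the telescoping identity keeps $b_\infty - b_1 \in N$, so $b_\infty - a_\infty \in N$ as well.

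It remains to observe that two idempotents $e_1, e_2$ of a commutative ring whose difference $d := e_1 - e_2$ is nilpotent must coincide. Direct expansion in the commutative setting gives
\[
d^2 = e_1 + e_2 - 2 e_1 e_2, \qquad d^3 = d \cdot d^2 = e_1 - e_2 = d,
\]
so $d = d^{3^k}$ for every $k$, and nilpotency of $d$ forces $d = 0$. Applied to $b_\infty$ and $a_\infty$ inside $R$ this gives $(a_1 a_\infty)_\infty = a_\infty$. The only real work is the algebraic identity at the very start; once that is in hand the rest is routine bookkeeping, and the main thing to keep straight is that every intermediate element stays in the commutative subalgebra $\k[a_1]$ so that the uniqueness of idempotent lifts can be invoked.
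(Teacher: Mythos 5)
Your proof is correct, and for the final assertion it takes a genuinely different route from the paper. The paper simply verifies by induction that $b_i = a_i a_\infty$ for all $i$ (using $a_\infty^2 = a_\infty$ and commutativity of the $a_i$'s as polynomials in $a_1$), from which $b_\infty = a_\infty a_\infty = a_\infty$ falls out directly. Your argument instead works in the commutative subalgebra $\k[a_1]$, observes that $a_\infty$ and $b_\infty$ are idempotents differing by a nilpotent element, and invokes the general fact that idempotent lifts modulo a nilpotent ideal in a commutative ring are unique (proved via $d^3 = d$). The paper's route is shorter and purely computational; yours is longer but cleanly isolates the uniqueness-of-lifts principle, which is a reusable lemma. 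You also supply an explicit factorisation $a_{i+1}^2 - a_{i+1} = -(a_i^2 - a_i)^2(3 - 2a_i)(2a_i + 1)$ and the telescoping identity $a_{i+1} - a_i = -(a_i^2 - a_i)(2a_i - 1)$ for the stabilisation step, whereas the paper dismisses that part as ``well-known''; your added detail is correct and self-contained. Both approaches are valid.
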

\begin{proof} The first assertion is well-known. The fact that $a_i$ commutes with $a_1$ can be proved easily using induction on $i$. Notice that $(a_1a_\infty)+N$ is again an idempotent. To show that $(a_1a_\infty)_\infty=a_\infty$, let $b_1=a_1a_\infty$ and, inductively, for all $i\ge 1$, let $b_{i+1}=3b_i^2-2b_i^3$. We check
  that $b_i=a_ia_\infty$ for all $i\ge  1$ using induction. Since $a_\infty$ is an idempotent, we
  have
\begin{align*}
b_{i+1}&=3b_i^2-2b_i^3=3(a_ia_\infty)^2-2(a_ia_\infty)^3=3a_i^2a_\infty^2-2a_i^3a_\infty^3=3a_i^2a_\infty-2a_i^3a_\infty=a_{i+1}a_\infty.
\end{align*} As such $b_\infty=a_\infty^2=a_\infty$.
\end{proof}

In the case of the descent algebra, notice that we have $\theta_\k(f_{i-1}'f_{\ge  i}f_{i-1}')=\Char_{\ge  i}$. This allows us to define the following.

\begin{definition}\label{D: idem} In $\Des_\k$, let $f'_1=1$ and, inductively, for $i\in [2,m]$, we define $f_i'=(f_{i-1}'f_{\ge  i}f_{i-1}')_\infty$ and $f_{m+1}'=0$. For each $i\in [1,m]$, define \[e_{J_i}=f_i'-f_{i+1}'.\]
\end{definition}

The following lemma is similar to \cite[Lemma 3.6]{Lim:2023a}.

\begin{lemma}\label{L: f'}\
\begin{enumerate}[\rm (i)]
\item For each $i\in [1,m]$, the element $f_i'$ is an idempotent such that $\theta_\k(f_i')=\Char_{\ge  i}$ and for $i>1$, we have $f_i'\in f_{i-1}'\Des_\k f_{i-1}'$.
\item For $i\in [2,m+1]$, \[\textstyle f_i'=1-\left (\frac{1}{\gamma_{i-1}}x_{J_{i-1}}+\epsilon_{i-1}\right )\] for some $\epsilon_{i-1}\in Y^\circ_{\le i-1}$.
\end{enumerate}
\end{lemma}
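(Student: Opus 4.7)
The plan is to run a joint induction on $i$, with the $(3a^2-2a^3)$-lift of \Cref{P: lift idem} doing the heavy work. For the base case of part (i), $f_1'=1$ is idempotent and $\theta_\k(f_1')=\theta_\k(x_S)=\varphi_S^\k=\sum_{J\in\R_q}\Char_{J,\k}=\Char_{\ge 1}$ (since $\varphi_S^\k$ is the constant function $1$), and the containment clause is vacuous. For the inductive step of (i), suppose $f_i'$ is idempotent with $\theta_\k(f_i')=\Char_{\ge i}$; set $a_1=f_i'f_{\ge i+1}f_i'$. Orthogonality of the characteristic functions in $\im\theta_\k$ yields $\theta_\k(a_1)=\Char_{\ge i}\,\Char_{\ge i+1}\,\Char_{\ge i}=\Char_{\ge i+1}$, and since $\Ker\theta_\k=\Rad(\Des_\k)$ is nilpotent, \Cref{P: lift idem} produces an idempotent $f_{i+1}'=(a_1)_\infty$. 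Because $\theta_\k$ commutes with $z\mapsto 3z^2-2z^3$ and $\Char_{\ge i+1}$ is a fixed point of this polynomial map (being itself idempotent), we obtain $\theta_\k(f_{i+1}')=\Char_{\ge i+1}$. Moreover $f_i'\Des_\k f_i'$ is a unital subring containing $a_1$ and closed under the lifting polynomial, so every iterate, hence $f_{i+1}'$, lies in $f_i'\Des_\k f_i'$.

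For part (ii), I run a parallel induction starting at $i=2$. By \Cref{L: f*}(iv), $f_{\ge i+1}=1-\sum_{j\le i}f_j$; \Cref{L: f*}(ii) puts each $f_j$ with $j<i$ in $Y_{J_j}\subseteq Y^\circ_{\le i}$, and writes $f_i=\tfrac{1}{\gamma_i}x_{J_i}+\eta$ with $\eta\in Y^\circ_{J_i}\subseteq Y^\circ_{\le i}$. Hence
\[ f_{\ge i+1}\equiv 1-\tfrac{1}{\gamma_i}x_{J_i}\pmod{Y^\circ_{\le i}}. \]
The inductive hypothesis $f_i'=1-\tfrac{1}{\gamma_{i-1}}x_{J_{i-1}}-\epsilon_{i-1}$ with $\epsilon_{i-1}\in Y^\circ_{\le i-1}\subseteq Y^\circ_{\le i}$ (trivially true when $i=1$ with $f_1'=1$) combined with $\tfrac{1}{\gamma_{i-1}}x_{J_{i-1}}\in Y_{J_{i-1}}\subseteq Y^\circ_{\le i}$ gives $f_i'\equiv 1\pmod{Y^\circ_{\le i}}$, so $a_1=f_i'f_{\ge i+1}f_i'\equiv 1-\tfrac{1}{\gamma_i}x_{J_i}\pmod{Y^\circ_{\le i}}$. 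The key computational step is then: \Cref{L: Y0}(ii) supplies $x_{J_i}^2\equiv\gamma_i x_{J_i}\pmod{Y^\circ_{\le i}}$, from which a direct expansion yields $(1-\tfrac{1}{\gamma_i}x_{J_i})^r\equiv 1-\tfrac{1}{\gamma_i}x_{J_i}\pmod{Y^\circ_{\le i}}$ for every $r\ge 1$, and hence $3a_r^2-2a_r^3\equiv 1-\tfrac{1}{\gamma_i}x_{J_i}\pmod{Y^\circ_{\le i}}$ at each stage. The stabilised value $f_{i+1}'$ therefore has the claimed form.

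The boundary case $i=m+1$ of part (ii) needs a small check: since $J_m=S$ we have $x_{J_m}=x_S=1$ and $\gamma_m=\beta_{SS}=[\N_W(W):W]=1$, so the prescribed expansion $0=f_{m+1}'=1-(1+\epsilon_m)$ is satisfied with $\epsilon_m=0\in Y^\circ_{\le m}$. The main obstacle I anticipate is the bookkeeping of the nested ideals $Y^\circ_{\le i-1}\subseteq Y^\circ_{\le i}$ and $Y_{J_j}\subseteq Y^\circ_{\le i}$ for $j<i$, and then verifying that the $(3a^2-2a^3)$-iteration preserves the \emph{finer} congruence modulo $Y^\circ_{\le i}$ rather than merely modulo the Jacobson radical; once that is set up, everything reduces to the single quadratic identity on $x_{J_i}$ coming from \Cref{L: Y0}(ii).
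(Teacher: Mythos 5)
Your proof is correct and takes essentially the same approach as the paper: a joint induction using the $(3a^2-2a^3)$-lift of \Cref{P: lift idem}, applying $\theta_\k$ for part (i) and tracking congruences modulo the two-sided ideals $Y^\circ_{\le i}$ for part (ii), with the key quadratic identity coming from \Cref{L: Y0}(ii). The only cosmetic difference is that you streamline the cubic expansion by observing that $1-\tfrac{1}{\gamma_i}x_{J_i}$ is already idempotent modulo $Y^\circ_{\le i}$, whereas the paper expands $3(\cdot)^2-2(\cdot)^3$ directly.
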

\begin{proof} We first prove part (i) by using induction on $i$. It is clearly true when $i=1$. Suppose that $i>1$ and assume that the statement holds true for $i-1$. Let $a_1=f_{i-1}'f_{\ge  i}f_{i-1}'$ and $a_{r+1}=3a_r^2-2a_r^3$ for $r\ge  1$. Let $A=f_{i-1}'\Des_\k f_{i-1}'$. Notice that $a_1\in A$ and, by \Cref{L: f*}, \[\theta_\k(a_1)=\theta_\k(f_{i-1}'f_{\ge  i}f_{i-1}')=\Char_{\ge  i-1}\Char_{\ge  i}\Char_{\ge  i-1}=\Char_{\ge  i}.\] Suppose that $\theta_\k(a_r)=\Char_{\ge  i}$ and $a_r\in A$ for some positive integer $r$. We have \[\theta_\k(a_{r+1})=\theta_\k(3a_r^2-2a_r^3)=3\ \Char_{\ge  i}^2-2\ \Char_{\ge  i}^3=\Char_{\ge  i}.\] As such, $f_i'=a_\infty=(f_{i-1}'f_{\ge  i}f_{i-1}')_\infty$ is an idempotent of $\Des_\k$ such that $\theta_\k(f_i')=\Char_{\ge  i}$. Furthermore, as $f_{i-1}'$ is an idempotent and $a_r\in A$, \[a_{r+1}=3a_r^2-2a_r^3=f_{i-1}'(3a_r^2-2a_r^3)f_{i-1}'\in A.\] Therefore, $f_i'=a_\infty\in A$.

We again argue by induction on $i$ to prove part (ii). Suppose that $i=2$. Let $a_1=f_{\ge  2}=1-f_1\equiv 1-\frac{1}{\gamma_1}x_{J_1}(\mod Y^\circ_{\le 1})$ and $a_{r+1}=3a_r^2-2a_r^3$ as before. Suppose that $a_r\equiv 1-\frac{1}{\gamma_1}x_{J_1}(\mod Y^\circ_{\le 1})$ for some $r\ge  1$.  By \Cref{L: Y0}, we have
\begin{align*}
a_{r+1}&\equiv {\textstyle 3(1-\frac{1}{\gamma_1}x_{J_1})^2-2(1-\frac{1}{\gamma_1}x_{J_1})^3}(\mod Y^\circ_{\le 1})\\
&= {\textstyle3(1-\frac{2}{\gamma_1}x_{J_1}+\frac{1}{\gamma_1}x_{J_1})-2(1-\frac{3}{\gamma_1}x_{J_1}+\frac{3}{\gamma_1}x_{J_1}-\frac{1}{\gamma_1}x_{J_1})}\\
&={\textstyle1-\frac{1}{\gamma_1}x_{J_1}}.
\end{align*} As such, $f_2'=a_\infty$ has the desired form as in the statement. Suppose now that $f_i'=1-(\frac{1}{\gamma_{i-1}}x_{J_{i-1}}+\epsilon_{i-1})$ for some $\epsilon_{i-1}\in Y^\circ_{\le i-1}$ and $\epsilon_{i-1}'=\frac{1}{\gamma_{i-1}}x_{J_{i-1}}+\epsilon_{i-1}\in Y_{\le i-1}$. Again, let $a_1=f_i'f_{\ge  {i+1}}f_i'$ and $a_{r+1}=3a_r^2-2a_r^3$ for $r\ge  1$. By Lemmas \ref{L: f*} and \ref{L: Y0}, let $f_i=\frac{1}{\gamma_i}x_{J_i}+z_i$ where $z_i\in Y^\circ_{\le i}$, we have
\begin{align*}
a_1&=(1-\epsilon_{i-1}')(1-f_1-\cdots-f_i)(1-\epsilon_{i-1}')\\
&={\textstyle(1-\epsilon_{i-1}')(1-f_1-\cdots-f_{i-1}-(\frac{1}{\gamma_i}x_{J_i}+z_i))(1-\epsilon_{i-1}')}\\
&\equiv {\textstyle1-\frac{1}{\gamma_i}x_{J_i}(\mod Y^\circ_{\le i})}.
\end{align*} Suppose now that $a_r\equiv 1-\frac{1}{\gamma_i}x_{J_i}(\mod Y^\circ_{\le i})$ for some $r\ge  1$. Same as the previous calculation, we have
\begin{align*}
a_{r+1}&\equiv {\textstyle3(1-\frac{1}{\gamma_i}x_{J_i})^2-2(1-\frac{1}{\gamma_i}x_{J_i})^3(\mod Y^\circ_{\le i})}\\
&={\textstyle1-\frac{1}{\gamma_i}x_{J_i}}.
\end{align*} As such, $f_{i+1}'=a_\infty$ has the desired form. The proof is now complete.
\end{proof}

We are now ready to prove the main result in this section.

\begin{theorem}\label{T: idem} The set $\{e_J:J\in\R_q\}$ is a complete set of orthogonal primitive idempotents of $\Des_\k$ such that $\theta_\k(e_J)=\Char_{J,\k}$ and $\sum_{J\in \R_q}e_J=1$. Furthermore, \[e_J=\textstyle\frac{1}{\gamma_J}x_J+\epsilon_J\] where $\epsilon_J$ is a linear combination of some $x_K$ such that $K\subsetneq_W J$, i.e., $\epsilon_J\in Y^\circ_J$ with coefficients belonging to the prime subfield of $\k$.
\end{theorem}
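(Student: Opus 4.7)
The plan is to verify each assertion in sequence by exploiting the nested structure of the projections $f_i'$ together with the normal forms furnished by \Cref{L: f*} and \Cref{L: f'}. I would first record the relation $f_j' \in f_i'\Des_\k f_i'$ whenever $j \ge i$, obtained by iterating \Cref{L: f'}(i); combined with idempotence of the $f_i'$ this yields $f_i' f_j' = f_j' f_i' = f_j'$ for all $j \ge i$. Expanding $(f_i'-f_{i+1}')^2$ then gives $e_{J_i}^2=e_{J_i}$, and for $i<j$ a similar expansion of $(f_i'-f_{i+1}')(f_j'-f_{j+1}')$ collapses to $0$, establishing orthogonality. The identity $\sum_{i=1}^m e_{J_i}=f_1'-f_{m+1}'=1$ is an immediate telescoping. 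Applying $\theta_\k$ and invoking $\theta_\k(f_i')=\Char_{\ge i}$ then gives $\theta_\k(e_{J_i})=\Char_{J_i,\k}$. Primitivity follows formally: since $\Ker\theta_\k=\Rad(\Des_\k)$, an idempotent of $\Des_\k$ is primitive if and only if its image in the commutative semisimple algebra $\im\theta_\k$ is primitive, and by \Cref{L: matrix M_\k} the $\Char_{J_i,\k}$ are primitive.

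The main obstacle is the final assertion $\epsilon_J\in Y^\circ_J$, equivalently $e_{J_i}\in Y_{J_i}$. A direct subtraction from \Cref{L: f'}(ii) only yields $e_{J_i}\in Y_{\le i}$, since $Y^\circ_{\le i-1}=Y^\circ_{J_{i-1}}+Y_{\le i-2}$ mixes contributions from Coxeter classes that are incomparable to $J_i$. My plan is to strengthen the inductive statement of \Cref{L: f'}(ii) to: for every $i\in[1,m+1]$ there exist $\delta_k\in Y^\circ_{J_k}$ for $k<i$ with
\[ f_i' = 1-\sum_{k<i}\bigl(\tfrac{1}{\gamma_k}x_{J_k}+\delta_k\bigr). \]
The induction step uses \Cref{L: Y0}(i), which tells us that $\sum_{k<i}Y_{J_k}$ is a two-sided ideal. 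Writing $f_i'=1-u$ with $u\in\sum_{k<i}Y_{J_k}$, one deduces $f_i'f_{\ge i+1}f_i'\equiv f_{\ge i+1}\pmod{\sum_{k<i}Y_{J_k}}$, and then \Cref{L: f*}(ii) gives $f_{\ge i+1}\equiv 1-\tfrac{1}{\gamma_i}x_{J_i}-z_i$ modulo $\sum_{k<i}Y_{J_k}$ with $z_i\in Y^\circ_{J_i}$. Because $3a^2-2a^3$ has integer coefficients and preserves the congruence (using also $x_{J_i}^2\equiv\gamma_i x_{J_i}$ modulo $Y^\circ_{J_i}+\sum_{k<i}Y_{J_k}$ from \Cref{L: Y0}(ii)), the iterated lift $(f_i'f_{\ge i+1}f_i')_\infty$ retains the form $1-\tfrac{1}{\gamma_i}x_{J_i}-z_i'$ modulo $\sum_{k<i}Y_{J_k}$ with $z_i'\in Y^\circ_{J_i}$. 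Subtracting the strengthened expressions for $f_i'$ and $f_{i+1}'$ then isolates $e_{J_i}=\tfrac{1}{\gamma_i}x_{J_i}+\delta_i$ with $\delta_i\in Y^\circ_{J_i}$.

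For the prime-subfield claim I would track scalars throughout the construction. The entries $b_{JK}$ of $M_\k^{-1}$ arise by inverting the integer parabolic table-of-marks after localising at the non-zero integers $\beta_{J_k,J_k}$, so each $f_J$, and hence each partial sum $f_{\ge i}$, has coefficients whose image in $\k$ lies in the prime subfield. The polynomial $3a^2-2a^3$ has integer coefficients and the idempotent lift terminates after finitely many iterations, so the $f_i'$, and hence $e_{J_i}=f_i'-f_{i+1}'$, retain prime-subfield coefficients.
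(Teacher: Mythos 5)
Your argument for the first sentence of the theorem is essentially sound and close in spirit to the paper's (which simply cites \Cref{L: f'} and \cite[Proposition~3.3(ii)]{Lim:2023a}). Your treatment of the prime-subfield claim is also fine. The genuine problem lies in the final assertion $\epsilon_J\in Y^\circ_J$, which you correctly identify as the crux.

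Your proposed strengthening of \Cref{L: f'}(ii) leads to a congruence, not an equality: the induction step only produces $f_{i+1}'\equiv 1-\tfrac{1}{\gamma_i}x_{J_i}-z_i'\pmod{\sum_{k<i}Y_{J_k}}$, so $f_{i+1}'=1-\tfrac{1}{\gamma_i}x_{J_i}-z_i'-w$ with $w\in\sum_{k<i}Y_{J_k}$ unknown. When you then subtract, you obtain
\[
e_{J_i}=f_i'-f_{i+1}'=\tfrac{1}{\gamma_i}x_{J_i}+z_i'+\Bigl(w-\sum_{k<i}\bigl(\tfrac{1}{\gamma_k}x_{J_k}+\delta_k\bigr)\Bigr),
\]
and the bracketed term lies in $\sum_{k<i}Y_{J_k}$ but need not lie in $Y^\circ_{J_i}$: the total order only refines $\subseteq_W$, so for incomparable Coxeter classes $J_k\not\subseteq_W J_i$ the spans $Y_{J_k}$ contribute basis elements $x_K$ that are not in $Y^\circ_{J_i}$. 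Nothing in your induction forces that residual to vanish or to cancel against the $\delta_k$ from $f_i'$, so the subtraction does not ``isolate'' $\tfrac{1}{\gamma_i}x_{J_i}+\delta_i$ with $\delta_i\in Y^\circ_{J_i}$. In effect, the stable form you want for $f_i'$ is equivalent to the statement about $e_{J_k}$ for $k<i$, and your induction step only re-derives a congruence modulo the wrong ideal.

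The paper closes this gap by establishing a second, complementary congruence, namely $f_{i+1}'\equiv f_i'\pmod{Y_{J_i}}$, via the identity $(a_1 a_\infty)_\infty=a_\infty$ from \Cref{P: lift idem} together with $f_i'f_if_i'\in Y_{J_i}$. This gives $e_{J_i}\in Y_{J_i}$ outright. Combined with the (already proved) congruence $e_{J_i}\equiv\tfrac{1}{\gamma_i}x_{J_i}\pmod{Y^\circ_{\le i}}$ and the key intersection identity $Y_{J_i}\cap Y^\circ_{\le i}=Y^\circ_{J_i}$ (which holds because the total order refines $\subseteq_W$, so no $K=_W J_i$ can lie in $Y_{J_k}$ for $k<i$), one gets $e_{J_i}-\tfrac{1}{\gamma_i}x_{J_i}\in Y^\circ_{J_i}$. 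Your plan is missing that second congruence modulo $Y_{J_i}$; without it, the subtraction step does not go through.
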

\begin{proof} \Cref{L: f'} and \cite[Proposition 3.3(ii)]{Lim:2023a} give us the first sentence of our theorem. We now prove inductively on $i$ that $e_J$ has the desired form. When $i=1$, since $Y^\circ_{\le 1}=Y^\circ_{J_1}$, we have \[\textstyle e_{J_1}=f_1'-f_2'\equiv 1-(1-\frac{1}{\gamma_1}x_{J_1})=\frac{1}{\gamma_1}x_{J_1}(\mod Y^\circ_{J_1}).\] For $i\in [2,m]$, as $Y^\circ_{\le i-1}\subseteq Y_{\le i-1}\subseteq Y^\circ_{\le i}$, by \Cref{L: f'}(ii), we have \[\textstyle e_{J_i}=-\left (\frac{1}{\gamma_{i-1}}x_{J_{i-1}}+\epsilon_{i-1}\right )+\left (\frac{1}{\gamma_i}x_{J_i}+\epsilon_i\right )\equiv \frac{1}{\gamma_i}x_{J_i}(\mod Y^\circ_{\le i}).\] To complete the proof, we are left to check that $e_{J_i}\in Y_{J_i}$ so that $e_{J_i}-\frac{1}{\gamma_i}x_{J_i}\in Y^\circ_{\le i}\cap Y_{J_i}=Y^\circ_{J_i}$. When $i=1$, $e_{J_1}=\frac{1}{\gamma_1}x_{J_1}+\epsilon_1\in Y_{\le 1}=Y_{J_1}$. Suppose now that $i\in [2,m]$. We have
\begin{align*}
f_{i+1}'&=(f_i'f_{\ge  i+1}f_i')_\infty=(f_i'(f_{\ge  i}-f_i)f_i')_\infty\equiv (f_i'f_{\ge  i}f_i')_\infty (\mod Y_{J_i})\\
&=((f_{i-1}'f_{\ge  i}f_{i-1}')_\infty f_{\ge  i}(f_{i-1}'f_{\ge  i}f_{i-1}')_\infty)_\infty\\
&=((f_{i-1}'f_{\ge  i}f_{i-1}')_\infty f_{i-1}'f_{\ge  i}f_{i-1}'(f_{i-1}'f_{\ge  i}f_{i-1}')_\infty)_\infty\\
&=(f_{i-1}'f_{\ge  i}f_{i-1}'(f_{i-1}'f_{\ge  i}f_{i-1}')_\infty)_\infty\\
&=(f_{i-1}'f_{\ge  i}f_{i-1}')_\infty=f_i'
\end{align*} where the equivalence is obtained using $f_i'f_if_i'\in Y_{J_i}$ and the construction in \Cref{P: lift idem}, the third equality follows from \Cref{L: f'}(i) and the fourth and fifth equalities follow from \Cref{P: lift idem}. As such, we get $e_{J_i}=f_{i+1}'- f_i'\equiv 0(\mod Y_{J_i})$.

The last assertion regarding the coefficients follows from our construction throughout this section. The proof is now complete.
\end{proof}


Recall the simple module $S_{J,\k}$ and the functor $\D(-)=\Hom_\k(-,\k)$ in Section \ref{S:Des}. We have the following corollary.

\begin{corollary}\label{C: proj inj cover} Let $J\in \R_q$. Then $P_{J,\k}\cong \Des_\k e_J$ and  $I_{J,\k}\cong \D(e_J\Des_\k)$.
\end{corollary}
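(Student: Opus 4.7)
The plan is to apply \Cref{P: proj cover} directly to the idempotents $e_J$ constructed in \Cref{T: idem}. \Cref{P: proj cover} tells us that the projective cover $P_{J,\k}$ and injective hull $I_{J,\k}$ of the simple module $S_{J,\k}$ can be realised as $\Des_\k e$ and $\D(e\Des_\k)$ respectively, provided $e \in \Des_\k$ is any primitive idempotent satisfying $\theta_\k(e) = \Char_{J,\k}$. The corollary therefore reduces to checking that our specific idempotent $e_J$ meets these two hypotheses.

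Both hypotheses are already established. \Cref{T: idem} asserts that $\{e_J : J \in \R_q\}$ is a complete set of orthogonal \emph{primitive} idempotents of $\Des_\k$, and that $\theta_\k(e_J) = \Char_{J,\k}$ for every $J \in \R_q$. Substituting $e = e_J$ into the conclusion of \Cref{P: proj cover} then yields $P_{J,\k} \cong \Des_\k e_J$ and $I_{J,\k} \cong \D(e_J \Des_\k)$.

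There is no real obstacle here; the work has all been done in the preceding section. The only subtle point worth a brief remark in the write-up is that \Cref{P: proj cover} requires $e$ to be primitive (not merely an idempotent lifting $\Char_{J,\k}$), since otherwise $\Des_\k e$ would only be a projective module having $P_{J,\k}$ as a summand, rather than the projective cover itself. Primitivity of $e_J$ is guaranteed by \Cref{T: idem}, so the argument is complete in a single step.
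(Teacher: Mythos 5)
Your proposal is correct and follows exactly the paper's own one-line proof, which simply invokes \Cref{T: idem} together with \Cref{P: proj cover}. Your added remark about why primitivity (and not merely lifting $\Char_{J,\k}$) is needed is a fair observation, though it is already built into the hypotheses of \Cref{P: proj cover}.
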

\begin{proof} Use \Cref{T: idem} and \Cref{P: proj cover}.
\end{proof}

\section{Sufficient conditions for the descent algebras}\label{S: proj}

Recall that $\cO$ is a local principal ideal domain with maximal ideal $(\pi)$, $\K$ is the field of fractions of $\cO$ and $\F=\cO/(\pi)$. Let $p$ be the characteristic of $\F$. In this section, we show that the descent algebras $\Des_\cO$ of $W$ over $\cO$ satisfy \Cref{hyp:rad} if $p\nmid |W|$ and \Cref{hyp:freeJ} if $p\nmid n_W$ (see \Cref{D: dW}).

Similar as before, we write $\OD=\Des_\cO$, $\KD=\K\otimes_\cO \Des_\cO$, $\FD=\F\otimes_\cO \Des_\cO$ and, for any $\OD$-module $M$ that is $\cO$-free, we write $\hat M=\K\otimes_\cO M$ and $\bar M=\F\otimes_\cO M$. For each $z\in\OD$, the coefficients of the $x_J$'s in $z$ belongs in $\cO$ and we write $\bar z$ for the element in $\FD$ by reducing the coefficients modulo $\pi$. Furthermore, since we will be using the elements $f_i'$, $f_J$ and $e_J$ in \Cref{S:Idem} over different fields $\K$ and $\F$, we use $f_{i,\K}'$ and so on to emphasise the field which we work over.

\subsection{Verifying \Cref{hyp:rad}} In this subsection, we verify that the descent algebras satisfy \Cref{hyp:rad} if $p\nmid |W|$. For the next lemma, we note that if $s\in \cO\backslash (\pi)$, then $s^{-1}\in \cO$. As such, an element in $\K$ belongs in $\cO$ if it can be written in the form $rs^{-1}$ where $r,s\in\cO$ and $s\not\in(\pi)$.

\begin{lemma}\label{L: B0Bp} Suppose that $p\nmid |W|$. We have that $\R=\R_p=\R_0$ and the entries in $M_\K^{-1}$ belongs in $\cO$. Moreover, the matrix $M_{\F}^{-1}$ is obtained from $M_\K^{-1}$ by taking modulo $\pi$ on its entries. In particular, for each $J\in\R$, we have $f_{J,\K}\in\OD$ and $\bar f_{J,\K}=f_{J,\F}$.
\end{lemma}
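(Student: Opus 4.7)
The plan is to exploit two elementary observations: that $\beta_{JJ} = [\N_W(W_J):W_J]$ divides $|W|$, and that $M$ is lower triangular by \Cref{L:beta}(ii). Together with the hypothesis $p \nmid |W|$ this will make everything reduce to linear algebra over the local ring $\cO$.

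First I would argue that $\R = \R_p = \R_0$. Since $\beta_{JJ}$ is a positive integer dividing $|W|$ (by Lagrange's theorem applied to $W_J \le \N_W(W_J) \le W$) and $p \nmid |W|$, we have $p \nmid \beta_{JJ}$ for every $J \in \R$. The identifications $\R = \R_p$ and $\R = \R_0$ are then immediate from the definition of $\R_q$. Moreover this same numerical fact shows that the image of $\beta_{JJ}$ in $\cO$ is a unit: its reduction modulo $\pi$ lies in the prime subfield $\F_p$ of $\F$ and is nonzero, so $\beta_{JJ} \in \cO^\times$.

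Next I would observe that the matrix $M_\cO$, obtained by viewing the integer matrix $M = (\beta_{JK})_{J,K \in \R}$ inside $\cO$, is lower triangular by \Cref{L:beta}(ii) with all diagonal entries units in $\cO$. Hence $M_\cO$ is invertible over $\cO$, with $M_\cO^{-1}$ lower triangular and having entries in $\cO$. Since $M_\K$ is obtained from $M_\cO$ by extension of scalars to $\K$, we have $M_\K^{-1} = M_\cO^{-1}$ as matrices, and in particular its entries $b_{JK}$ all lie in $\cO$. Reducing the identity $M_\cO \cdot M_\cO^{-1} = I$ modulo $\pi$ yields $M_\F \cdot \overline{M_\cO^{-1}} = I$ in $\F$, so by uniqueness of inverses the matrix $M_\F^{-1}$ is precisely the entrywise reduction of $M_\K^{-1}$ modulo $\pi$.

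Finally, the defining formula \eqref{Eq:fJ} shows $f_{J,\K} = \sum_{K \in \R} b_{JK} x_K$ with $b_{JK} \in \cO$, so $f_{J,\K} \in \OD$, and $\bar f_{J,\K} = \sum_{K \in \R} \bar b_{JK} x_K = f_{J,\F}$ by the description of $M_\F^{-1}$ just obtained. There is no real obstacle here; the only point requiring attention is the invertibility of the diagonal entries $\beta_{JJ}$ in $\cO$, which is exactly where the hypothesis $p \nmid |W|$ is used.
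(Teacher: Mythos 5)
Your proof is correct and follows essentially the same approach as the paper. Both arguments rest on the same key facts: $\beta_{JJ}$ divides $|W|$, so each $\beta_{JJ}$ is a unit in $\cO$; $M$ is lower triangular, so $\det(M) = \prod_J \beta_{JJ}$ is a unit in $\cO$; and inversion of matrices over $\cO$ commutes with reduction modulo $\pi$. The paper makes the last step explicit via the adjugate formula $M^{-1} = \det(M)^{-1}\mathrm{adj}(M)$, while you phrase it more compactly via uniqueness of inverses, but this is a cosmetic difference rather than a genuinely different route. (One small imprecision: your phrasing about the prime subfield $\F_p$ tacitly assumes $p>0$; in the case $p=0$ the conclusion that $\overline{\beta_{JJ}}\neq 0$ is even more immediate since $\Z$ injects into $\F$, so nothing is lost.)
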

\begin{proof} Since $p\nmid |W|$, we have $\beta_{JJ}=[\N_W(W_J):W_J]\not\in(\pi)$ for all $J\in\R$ and hence $\R=\R_p=\R_0$. Notice that $\beta_{JK}\in\cO$ for each $J,K\in\R$ and $M_\F=(\overline \beta_{JK})$. The adjugate matrix of $M_\K$ have entries belonging in $\cO$ and the adjugate matrix of $M_\F$ is obtained from that of $M$ modulo $\pi$. Furthermore, $\det(M_\K)=\prod_{J\in \R}\beta_{JJ}\not\in(\pi)$ and hence $\det(M_\K)^{-1}\in\cO$. Furthermore, $\det(M_\F)=\overline{\det(M)}\neq 0$. As such, the entries of $M_\K^{-1}$ belong in $\cO$ and they give the entries for $M_{\F}^{-1}$ upon modulo $\pi$.
\end{proof}

The proof of the following lemma requires the construction presented in \Cref{S:Idem} which we shall not go into the details.

\begin{lemma}\label{le: idemp e reducible} Suppose that $p\nmid |W|$. For each $J\in \R$, we have $e_{J,\K}\in\OD$  and $\bar e_{J,\K}=e_{J,\F}$. In other words, the complete set of primitive orthogonal idempotents $\{e_{J,\F}:J\in\R\}$ of $\FD$ are liftable to $\OD$.
\end{lemma}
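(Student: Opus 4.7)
The plan is to track the inductive construction of the idempotents in \Cref{D: idem} and verify at each stage that it takes place inside $\OD$ and is compatible with reduction modulo $\pi$. Concretely, I would prove by induction on $i\in[1,m+1]$ that $f_{i,\K}'\in\OD$ and $\overline{f_{i,\K}'}=f_{i,\F}'$. The base cases $i=1$ and $i=m+1$ are trivial (both sides equal $1$, respectively $0$), so the content lies entirely in the inductive step. The conclusion $e_{J_i,\K}=f_{i,\K}'-f_{i+1,\K}'\in\OD$ with $\overline{e_{J_i,\K}}=e_{J_i,\F}$ will then follow by taking differences.

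For the inductive step, assume $f_{i-1,\K}'\in\OD$ and $\overline{f_{i-1,\K}'}=f_{i-1,\F}'$. By \Cref{L: B0Bp} each $f_{J,\K}$ lies in $\OD$ with $\overline{f_{J,\K}}=f_{J,\F}$, so the sum $f_{\ge i,\K}\in\OD$ reduces to $f_{\ge i,\F}$. Hence the starting element $a_1^\K:=f_{i-1,\K}'\,f_{\ge i,\K}\,f_{i-1,\K}'$ of the $(3a^2-2a^3)$-iteration defining $f_{i,\K}'$ lies in $\OD$ and reduces to its $\F$-analogue $a_1^\F$. Since the recursion $a_{j+1}=3a_j^2-2a_j^3$ has integer coefficients and reduction modulo $\pi$ is a ring homomorphism $\OD\to\FD$, every subsequent iterate $a_j^\K$ lies in $\OD$ with $\overline{a_j^\K}=a_j^\F$.

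The one substantive point is that the $\K$- and $\F$-stabilisations occur simultaneously. By \Cref{P: lift idem} applied over $\K$, there is some $r$ with $a_{r+1}^\K=a_r^\K=f_{i,\K}'$; reducing this identity modulo $\pi$ yields $a_{r+1}^\F=a_r^\F$, so the $\F$-iteration is also stable at step $r$, forcing $f_{i,\F}'=a_r^\F=\overline{a_r^\K}=\overline{f_{i,\K}'}$. This closes the induction, and the differences $e_{J_i,\K}=f_{i,\K}'-f_{i+1,\K}'$ are then idempotents in $\OD$ reducing to the primitive orthogonal idempotents $e_{J_i,\F}$ of $\FD$ produced by \Cref{T: idem}. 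The only genuine obstacle is this compatibility of stabilisation across base change, and it is forced by the integrality of the coefficients in the $(3a^2-2a^3)$-construction, which makes the iteration commute with the quotient map $\OD\to\FD$.
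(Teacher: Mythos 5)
Your proof is correct and takes essentially the same approach as the paper's, which likewise appeals to \Cref{L: B0Bp} for the $f_{J,\K}$, the integrality of the $(3a^2-2a^3)$-iteration, and its compatibility with the reduction map $\OD\to\FD$. You usefully make explicit the compatibility-of-stabilisation point (that once $a_r^\K$ is idempotent, $\overline{a_r^\K}$ is idempotent, so the $\F$-iteration is constant from step $r$ and its limit is $\overline{a_r^\K}$), which the paper subsumes in the terse remark that the construction ``commutes'' with reduction modulo $\pi$.
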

\begin{proof} For each $J\in\R$, by  \Cref{L: B0Bp}, we have $f_{J,\K}\in\OD$ and $\bar f_{J,\K}=f_{J,\F}$. Since the structure constants $a_{IKL}$'s belong in $\cO$ and the construction of the idempotents are obtained by iterating the process $a_{r+1}=3a_r^2-2a_r^3$, and it stabilises after finite number of steps, we have that $f_{i,\K}'\in\OD$ and hence $e_{J,\K}\in\OD$. The construction `commutes' with the action of taking modulo $\pi$. As such, $\bar e_{J,\K}=e_{J,\F}$.
\end{proof}


\begin{corollary}\label{C: hypbasic for Des} Suppose that $p\nmid |W|$. We have \[\OD=\Sp_{\cO}\{e_{J,\K}:J\in\R\}\oplus (\Rad(\KD)\cap \OD)\] and $\Rad(\OD)=\pi\OD+(\Rad(\KD)\cap \OD)$.
\end{corollary}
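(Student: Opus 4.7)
The plan is to bootstrap from the decomposition of $\KD$ furnished by the primitive idempotents $\{e_{J,\K}:J\in\R\}$ of \Cref{T: idem}, and then intersect it with $\OD$, using the character map $\theta_\K$ to control the $\K$-coefficients.

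First I would apply \Cref{T: idem} over $\K$. Since $p\nmid|W|$ forces $\R_0=\R$ by \Cref{L: B0Bp}, the set $\{e_{J,\K}:J\in\R\}$ is a complete set of primitive orthogonal idempotents of $\KD$ with $\theta_\K(e_{J,\K})=\Char_{J,\K}$. Combined with $\Rad(\KD)=\Ker\theta_\K$ from \Cref{T: Sol Radical} and the fact that the $\Char_{J,\K}$ form a basis of $\im\theta_\K\cong\KD/\Rad(\KD)$ (\Cref{L: matrix M_\k}), this yields a $\K$-vector space decomposition
\[\KD=\Sp_\K\{e_{J,\K}:J\in\R\}\oplus\Rad(\KD).\]

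The next step is to refine this decomposition over $\OD$. Given $z=\sum_K\alpha_Kx_K\in\OD$ with $\alpha_K\in\cO$, write $z=\sum_Jc_Je_{J,\K}+r$ with $c_J\in\K$ and $r\in\Rad(\KD)$. The key observation is that $\theta_\K(z)(c_J)=\sum_K\alpha_K\varphi_K(c_J)\in\cO$, because each $\varphi_K(c_J)\in\Z$. Applying $\theta_\K$ to the decomposition of $z$ and using $\theta_\K(r)=0$ gives $c_J=\theta_\K(z)(c_J)\in\cO$. Since $e_{J,\K}\in\OD$ by \Cref{le: idemp e reducible}, both summands in $z=\sum c_Je_{J,\K}+r$ sit in $\OD$, and hence $r\in\Rad(\KD)\cap\OD$. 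Directness in $\OD$ is inherited from the directness in $\KD$.

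For the radical equality I would verify the two containments separately. The inclusion $\pi\OD+(\Rad(\KD)\cap\OD)\subseteq\Rad(\OD)$ is immediate: $\pi\OD\subseteq\Rad(\OD)$ by the standard argument of \Cref{th:basic}(vii), and elements of $\Rad(\KD)\cap\OD$ are nilpotent in $\KD$, hence nilpotent in $\OD$. Conversely, reducing the displayed direct sum modulo $\pi$ gives $\FD=\Sp_\F\{e_{J,\F}:J\in\R\}\oplus\overline{\Rad(\KD)\cap\OD}$, and by \Cref{le: idemp e reducible} the $e_{J,\F}$ form a complete set of primitive orthogonal idempotents of the basic algebra $\FD$, so the complement $\overline{\Rad(\KD)\cap\OD}$ is forced to equal $\Rad(\FD)$. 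Therefore $\OD/(\pi\OD+(\Rad(\KD)\cap\OD))\cong\FD/\Rad(\FD)$ is semisimple, giving $\Rad(\OD)\subseteq\pi\OD+(\Rad(\KD)\cap\OD)$.

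The only mildly delicate point is the coefficient-extraction argument showing $c_J\in\cO$; once that integrality is in hand, the rest is formal, relying purely on the fact (already proved) that the idempotents $e_{J,\K}$ lie in $\OD$ and reduce to the idempotents $e_{J,\F}$.
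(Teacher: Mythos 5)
Your argument is correct, but it takes a genuinely different route from the paper. The paper proves $\OD=\Sp_{\cO}\{e_{J,\K}\}\oplus(\Rad(\KD)\cap\OD)$ by induction on the total order $\Rleq$ on $\R$, using the explicit shape of the idempotent from \Cref{T: idem}: since $e_{J,\K}=\frac{1}{\gamma_J}x_J+\epsilon_J$ with $\epsilon_J$ an $\cO$-combination of $x_K$ for $K\subsetneq_W J$, one recovers $x_J=\gamma_J e_{J,\K}-\gamma_J\epsilon_J$ with all terms already known to lie in $U$ by the inductive hypothesis, and then $x_{J'}=x_J+(x_{J'}-x_J)$ for $J'=_W J$ handles the remaining basis elements. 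Your proof instead starts from the $\K$-linear decomposition $\KD=\Sp_\K\{e_{J,\K}\}\oplus\Rad(\KD)$ and uses the character map $\theta_\K$ to extract each scalar coefficient as $\theta_\K(z)$ evaluated at a Coxeter element, which lands in $\cO$ because the permutation character values $\varphi_K(c_J)$ are integers and $z$ has $\cO$-coefficients. This avoids the induction and the fine structure of $e_{J,\K}$ entirely (you still need $e_{J,\K}\in\OD$ and $\bar e_{J,\K}=e_{J,\F}$ from \Cref{le: idemp e reducible}, as does the paper). Your approach is arguably more conceptual, trading the combinatorics of $Y^\circ_J$ for the integrality of character values; the paper's is more elementary and self-contained in that it only uses the explicit idempotent formula. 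You also supply a full argument for the radical equality via reduction modulo $\pi$ and a dimension count against the semisimple quotient $\FD/\Rad(\FD)$, where the paper simply asserts that it follows from the decomposition. One small stylistic point: you overload the symbol $c_J$, using it simultaneously for the scalar coefficient in the decomposition and for the Coxeter element of $W_J$; the intent is clear, but it would be cleaner to rename the scalar.
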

\begin{proof}
By \Cref{le: idemp e reducible}, $e_{J,\K}\in\OD$ for any
$J\in\R$. Since $e_{J,\K}$ are idempotents, the sum in the right hand
side of the display equation in the statement is clearly a direct sum. Let
$U:=\Sp_{\cO}\{e_{J,\K}:J\subseteq S\}\oplus (\Rad(\KD)\cap
\OD)$. Clearly, $U\subseteq \OD$. For the reverse inclusion, we
argue by induction on the total order $(\Rleq, \R)$. When
$J=\emptyset\in\R$, we have
$e_{\emptyset,\K}=\frac{1}{\gamma_\emptyset}x_\emptyset$ where
$\frac{1}{\gamma_\emptyset}\in \cO$. Thus $x_\emptyset\in U$. For any
$J\in\R$, by induction, we have
\[x_J=\gamma_Je_{J,\K}+\gamma_J(\text{$\cO$-linear combination of
    $x_K$ where $K\subsetneq_W J$})\in U.\]
For any $J'\subseteq S$ such that $J'=_WJ$, we have
$x_{J'}=x_J+(x_{J'}-x_J)\in U$ as $x_{J'}-x_J\in \Rad(\KD)\cap
\OD$. Therefore $\OD=U$. The expression for $\Rad(\OD)$ now follows from the decomposition.
\end{proof}

\begin{theorem}\label{T:weakdescent} Suppose that $p\nmid |W|$. The descent algebra $\OD$ satisfies \Cref{hyp:rad}. In particular, \Cref{th:main}(ii) applies for $\OD$.
\end{theorem}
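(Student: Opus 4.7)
The plan is to verify the three defining conditions of \Cref{hyp:rad} in turn, each by invoking a result already proved in this section, and then to appeal to \Cref{th:main}(ii) for the final assertion.

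First, finite $\cO$-rank is immediate: by the review in \Cref{S:Des}, $\OD$ is the $\cO$-span of $\{x_J : J \subseteq S\}$, hence $\cO$-free of rank $2^{|S|}$. Second, the radical identity $\Rad(\OD) = \pi \OD + \JA$ (where $\JA = \OD \cap \Rad(\KD)$) is exactly the second assertion of \Cref{C: hypbasic for Des}; the hypothesis $p \nmid |W|$ feeds into \Cref{L: B0Bp} to ensure $\R_p = \R$, so the entries of $M_\K^{-1}$ lie in $\cO$ and the full complement $\Sp_\cO\{e_{J,\K} : J \in \R\}$ is available to yield the required decomposition.

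Third, I need to check that every orthogonal idempotent decomposition of $1$ in $\FD$ lifts to $\OD$. By \Cref{le: idemp e reducible}, the explicit complete set of primitive orthogonal idempotents $\{e_{J,\F} : J \in \R\}$ of $\FD$ lifts to $\{e_{J,\K} : J \in \R\} \subseteq \OD$. Since $\FD$ is basic by \Cref{T: Sol Radical}, any orthogonal decomposition of $1$ in $\FD$ is (up to conjugation by a unit) a grouping of these primitives into blocks, so taking the corresponding sums of the lifts $e_{J,\K}$ produces a lift of the given decomposition in $\OD$. With the three conditions verified, \Cref{hyp:rad} holds, and \Cref{th:main}(ii) then applies directly to give the $\Ext$-comparison over $\K$, $\cO$ and $\F$ for $\OD$.

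There is no substantive obstacle here: all the hard work was done in \Cref{C: hypbasic for Des} and \Cref{le: idemp e reducible}. The only minor point to be careful about is that \Cref{hyp:rad} asks for \emph{arbitrary} orthogonal idempotent decompositions to lift, not just the primitive one; this is handled by the basic-algebra observation above.
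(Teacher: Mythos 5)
Your proof is correct and follows the paper's route exactly — both invoke \Cref{le: idemp e reducible} and \Cref{C: hypbasic for Des} as the two key ingredients and then cite \Cref{th:main}(i)--(ii). The extra care you take over lifting \emph{arbitrary} orthogonal idempotent decompositions (not just the primitive one $\{e_{J,\F}\}$) is a reasonable addition; it only slightly elides the step of also lifting the conjugating unit of $\FD$ to a unit of $\OD$ (possible since $\pi\OD\subseteq\Rad(\OD)$ by \Cref{C: hypbasic for Des}) before conjugating the lifted grouping back, though the paper itself is no more explicit on this point.
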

\begin{proof} The first statement follows by applying both \Cref{le: idemp e reducible} and \Cref{C: hypbasic for Des}. Therefore, $\OD$ satisfies \Cref{hyp:Ext} using \Cref{th:main}(i). Hence \Cref{th:main}(ii) applies for $\OD$.
\end{proof}

As a special case of \Cref{T:weakdescent}, we have the following corollary.

\begin{corollary}\label{C:descentExtQ} Let $W$ be a finite Coxeter group and suppose that $p\nmid |W|$. Then the Ext quivers of $\KD$ and $\FD$ are identical.
\end{corollary}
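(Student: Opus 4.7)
The plan is to reduce the corollary to a direct application of \Cref{th:main}(ii) via \Cref{T:weakdescent}. First I would recall that, by the hypothesis $p\nmid|W|$, \Cref{L: B0Bp} gives $\R_0=\R_p=\R$. Consequently the simple modules of $\KD$ and $\FD$ are both indexed by the same set $\R$, so the vertex sets of the two Ext quivers can be identified canonically. Moreover, by \Cref{T: Sol Radical} both $\KD$ and $\FD$ are basic, so the Ext quivers are well-defined in the usual sense.

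Next I would produce the required integral lifts of the simple modules. For each $J\in\R$, recall from \Cref{S:Des} the rank-one $\Z$-free $\Des_\Z$-module $S_{J,\Z}$ on which $x_K$ acts as multiplication by $\varphi_K(c_J)$; set $S_{J,\cO}=\cO\otimes_\Z S_{J,\Z}$, which is an $\cO$-free $\OD$-module of rank one. Then $\K\otimes_\cO S_{J,\cO}\cong S_{J,\K}$ and $\F\otimes_\cO S_{J,\cO}\cong S_{J,\F}$, the former because $J\in\R_0$ and the latter because $J\in\R_p$.

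The core step is to apply \Cref{th:main}(ii). By \Cref{T:weakdescent}, $\OD$ satisfies \Cref{hyp:rad}, and therefore (by \Cref{th:main}(i)) it also satisfies \Cref{hyp:Ext}. Taking $M=S_{J,\cO}$ and $N=S_{J',\cO}$ (both finitely generated and $\cO$-free, with $\hat N=S_{J',\K}$ simple) and $t=1$, \Cref{th:main}(ii) yields
\[
\dim_\K\Ext^1_{\KD}(S_{J,\K},S_{J',\K})=\dim_\F\Ext^1_{\FD}(S_{J,\F},S_{J',\F})
\]
for all $J,J'\in\R$. Since the number of arrows from the vertex labelled by $J$ to the one labelled by $J'$ in each Ext quiver is exactly this common dimension, the two Ext quivers are identical under the identification of vertex sets.

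There is no real obstacle here; the corollary is essentially a dictionary translation of the $t=1$ case of \Cref{th:main}(ii) once one checks that the simple modules admit compatible integral forms and that the vertex sets match. The only small points requiring care are verifying that $\R_0=\R_p$ under the hypothesis $p\nmid|W|$ (immediate from \Cref{L: B0Bp}) and confirming that $S_{J,\cO}$ really is $\cO$-free with the correct scalar extensions over $\K$ and $\F$.
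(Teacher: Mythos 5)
Your argument is correct and follows the same route the paper intends: the paper simply states the corollary as a special case of \Cref{T:weakdescent} (which establishes \Cref{hyp:rad} and hence, via \Cref{th:main}, the equality of $\Ext^1$-dimensions between corresponding simples), and you have correctly filled in the implicit details — matching the vertex sets via $\R_0=\R_p=\R$ and producing the $\cO$-free integral forms $S_{J,\cO}$ whose scalar extensions recover the simples over $\K$ and $\F$.
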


We remark that Schocker \cite{Schocker:2004a} computed the Ext quivers for the descent algebras of type $\mathbb{A}$ over any field of characteristic zero. Saliola \cite{Saliola:2008a} extended the computation to both types $\mathbb{A}$ and $\mathbb{B}$ over $k$ given that $p\nmid |W|$ and particularly showed that the Ext quivers of $\KD$ and $\FD$ are identical. Our corollary asserts that such phenomenon holds for arbitrary Coxeter group. In particular, \Cref{C:descentExtQ} and Schocker's result give an alternative proof to Saliola's result in the type $\mathbb{A}$ case.


\subsection{Verifying \Cref{hyp:freeJ}} Recall the set $\I(n)$ we have introduced in Section \ref{S:Des}. For each $J\in\R$, we fix $J_1,\ldots,J_{n_J}$ the distinct subsets of $S$ such that $J_i=_WJ$. Let
\begin{align}\label{Eq: Basis B}
\B&=\{(J_i,J_{i+1}):J\in\R,\ i\in [1,n_J-1]\}\subseteq \I(1),\\
\Ba&=\{x_{J_i}-x_{J_{i+1}}:(J_i,J_{i+1})\in \B\}.\nonumber
\end{align}
Notice that, for any $1\le i<j\le n_J$, we have $x_{J_i}-x_{J_j}=\sum^{j-1}_{k=i}(x_{J_k}-x_{J_{k+1}})$. Notice also that $\Ba$ forms a basis for $\Rad(\KD)$ (respectively, for $\Rad(\FD)$ when $p\nmid |W|$) (cf. \Cref{L: basis for J^r}).



\begin{definition} We fix a total order for $\I(n)$ (so that $\B$ inherits that of $\I(1)$). Define the matrix $[\I(n)]$ where the rows and columns of $[\I(n)]$ are labelled by $\I(n)$ and $\B$ respectively and the corresponding entry is the coefficient of $x_L-x_{L'}$ in $x_{\un{J},\un{K}}$ as elements in $\OD$ where $(\un{J},\un{K})\in \I(n)$ and $(L,L')\in \B$. Let $[\I(n)]_\K=\K\otimes_\cO [\I(n)]$ and $[\I(n)]_\F=\F\otimes_\cO [\I(n)]$.
\end{definition}

\begin{lemma}\label{L: rank} Let $(W,S)$ be a Coxeter system and $n$ be a positive integer.
\begin{enumerate}[\rm (i)]
\item The entries of $[\I(n)]$ belong in $\Z$.
\item Suppose further that $p\nmid |W|$. Then $\rank([\I(n)]_\K)=\dim_\K \Rad^n(\KD)$ and $\rank([\I(n)]_\F)=\dim_\F \Rad^n(\FD)$.
\end{enumerate}
\end{lemma}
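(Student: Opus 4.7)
The plan is to split the lemma into its two parts and treat each separately. Part (i) is a direct computation showing that rewriting the product $x_{\un{J},\un{K}}$ in the basis $\Ba$ only involves integer coefficients; part (ii) will be an immediate consequence of \Cref{L: basis for J^r} once the role of $\Ba$ as a basis of the radical is in place.

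For part (i), I would first note that the structure constants $a_{JKL}$ all lie in $\Z$, so expanding $x_{\un{J},\un{K}}=\prod_{i=1}^n(x_{J_i}-x_{K_i})$ and applying these constants yields $x_{\un{J},\un{K}}=\sum_{L\subseteq S}c_L x_L$ with $c_L\in\Z$. Since $x_{\un{J},\un{K}}\in\Rad^n(\KD)\subseteq\Rad(\KD)$ and $\KD=\Sp_\K\{x_J:J\in\R\}\oplus\Rad(\KD)$ is a direct sum (a complement to the span of $\Ba$), projecting to the first summand forces $\sum_{L=_W J}c_L=0$ for every $J\in\R$. Within the Coxeter class of $J$, with its ordered representatives $J_1,\dots,J_{n_J}$ used to index $\Ba$, I would then apply the telescoping identity
\[
\sum_{i=1}^{n_J} c_i\, x_{J_i}=\sum_{i=1}^{n_J-1}(c_1+c_2+\dots+c_i)(x_{J_i}-x_{J_{i+1}}),
\]
which is valid precisely when $c_1+\dots+c_{n_J}=0$, to exhibit the class-block of $x_{\un{J},\un{K}}$ as an integer combination of the elements of $\Ba$ lying in that class. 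Summing across classes then shows that every entry of $[\I(n)]$ lies in $\Z$.

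For part (ii), the plan is to invoke \Cref{L: basis for J^r} with $\k=\K$ (where the hypothesis $q\nmid|W|$ is vacuous since $q=0$) and with $\k=\F$ (where it becomes the assumption $p\nmid|W|$). This gives
\[
\Rad^n(\KD)=\Sp_\K\{x_{\un{J},\un{K}}:(\un{J},\un{K})\in\I(n)\},
\]
together with the analogous equality for $\Rad^n(\FD)$ after reducing modulo $\pi$. Since $\Ba$ is already known to be a basis of $\Rad(\KD)$, and of $\Rad(\FD)$ under $p\nmid|W|$ (as recorded just before the lemma), the rows of $[\I(n)]_\K$ and of $[\I(n)]_\F$ are exactly the coordinate vectors of these spanning families with respect to $\Ba$. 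Therefore each matrix rank equals the dimension of the corresponding span, namely $\dim_\K\Rad^n(\KD)$ and $\dim_\F\Rad^n(\FD)$ respectively.

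The only real obstacle is the bookkeeping in part (i), where one must track that the passage from a $\Z$-combination of the $x_L$'s to a combination of $\Ba$ stays within $\Z$; once that is settled, part (ii) is formal given \Cref{L: basis for J^r} and the basis property of $\Ba$.
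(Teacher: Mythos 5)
Your proposal is correct and takes essentially the same approach as the paper: part (ii) is deduced from \Cref{L: basis for J^r} together with the fact that $\Ba$ is a basis of the radical, and part (i) combines the integrality of the structure constants with the vanishing of the class-wise coefficient sums for elements of $\Rad(\Des_\Q)$. You have in fact spelled out the telescoping identity and the correct per-Coxeter-class statement $\sum_{L=_W J}c_L=0$ more explicitly than the paper's terse ``the sum of all the coefficients ... is $0$''.
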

\begin{proof} Part (ii) follows from \Cref{L: basis for J^r}. We now prove part (i). For each $(\un{J},\un{K})\in \I(n)$ and $L\subseteq S$, since the structure constants of the descent algebra belong in $\Z$, we have $x_{\un{J},\un{K}}\in\Des_\Z$. Also, since $x_{\un{J},\un{K}}\in\Rad(\Des_\Q)=\Sp_\Q \Ba$, the sum of all the coefficients in terms of the basis $\{x_J:J\subseteq S\}$ is $0$ and hence the entries in $[\I(n)]$ belong in $\Z$.
\end{proof}

\begin{definition}\label{D: dW} Let $(W,S)$ be a Coxeter system and $s=\dim_\K \Rad^n(\KD)\neq 0$ (or equivalently, $[\I(n)]$ is not a zero matrix). Let $d_{W,n}$ be the greatest common divisor of the determinants of all possible $(s\times s)$-submatrices of $[\I(n)]$. Furthermore, let $n_{W,n}=\lcm(d_{W,n},|W|)$ and $n_W$ be the least common multiple of the $n_{W,n}$'s such that $d_{W,n}\neq 0$, i.e., \[n_W=\lcm\{n_{W,n}:n\le \ell\}\] where $\ell$ is the radical length of $\KD$ (that is, the largest non-negative integer $\ell$ such that $\Rad^\ell(\KD)\neq 0$).
\end{definition}

Notice that, if $\B\neq\emptyset$, then $[\I(1)]$ contains an identity submatrix and hence $d_{W,1}=1$. At the moment, the number $d_{W,n}$ seemed to depend on the choice of $\B$ (and hence the basis $\Ba$). In Section \ref{S: indep}, we will show that it is not the case if we replace $\B$ by another basis $\B'$ within $\I(1)$.

Before proceeding to the next lemma, we provide some examples.

\begin{example} Consider the Coxeter group $(W,S)$ of the dihedral type $\mathbb{I}_n$ where $S=\{s_1,s_2\}$ so that $W$ is the dihedral group of order $2n$. It is well-known that $\{s_1\}=_W\{s_2\}$ if and only if $n$ is odd. Therefore,  $\B=\emptyset$ unless $n$ is odd. In the case when $n$ is odd, $B=\{x_{\{s_1\}}-x_{\{s_2\}}\}$ and $[\I(r)]$ is zero for $r\ge  2$. Therefore, for each $n$, we have $n_W=2n$.
\end{example}

\begin{example} Consider the Coxeter group of $(W,S)$ type $\mathbb{A}_3$ where $S=\{s_1,s_2,s_3\}$. We have \[\B=\{x_{\{s_1\}}-x_{\{s_2\}},x_{\{s_2\}}-x_{\{s_3\}},x_{\{s_1,s_2\}}-x_{\{s_2,s_3\}}\}.\] The matrix $[\I(r)]$ is zero for $r\ge  3$. For $r=2$, $\Rad^2(\KD)$ is one-dimensional. Since \[(x_{\{s_1,s_2\}}-x_{\{s_2,s_3\}})^2=x_{\{s_1\}}-2x_{\{s_2\}}+x_{\{s_3\}},\] the corresponding row of $[\I(2)]$ is $\begin{pmatrix} 1&-1&0
\end{pmatrix}$. Therefore, $d_{W,2}=1$. Thus $n_W=24$.
\end{example}

\begin{example} By Magma~\cite{Bosma/Cannon/Playoust:1997a}
  computation, for type $\mathbb{E}_6$ or $\mathbb{E}_7$ and for each $n\in\{2,3,4\}$,
  we found a submatrix of $[\I(n)]$ with the prime factors of whose
  determinant are only $2,3,5$. For example, in the case of $\mathbb{E}_7$, one
  of the determinants is the following 53-digit number: \[17617366159747974325631727124547934643940229120000000=2^{105}3^{33}5^7.\]
\end{example}

\begin{question} Let $(W,S)$ be a Coxeter system and $p$ be a prime number. By definition, $p\nmid n_W$ implies $p\nmid |W|$. Is the converse true?
\end{question}

\begin{lemma}\label{L: rth radical basis} Let $(W,S)$ be a Coxeter system, $n$ a positive integer and suppose that $p\nmid n_{W,n}$. We have $\R_p=\R$ and there exists a subset $B(n)\subseteq \I(n)$ such that the sets $\{x_{\un{J},\un{K}}:(\un{J},\un{K})\in B(n)\}\subseteq \KD$ and $\{\bar x_{\un{J},\un{K}}:(\un{J},\un{K})\in B(n)\}\subseteq \FD$ form bases for $\Rad^n(\KD)$ and $\Rad^n(\FD)$ respectively. In particular, $\dim_\K \Rad^n(\KD)=\dim_\F \Rad^n(\FD)$.
\end{lemma}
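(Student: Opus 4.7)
The plan is to exploit the elementary observation that, for an integer matrix of rank $s$, non-divisibility by $p$ of the gcd of its $s{\times}s$ minors gives a single $s{\times}s$ submatrix that is invertible over $\cO$, hence over both $\K$ and $\F$ simultaneously. This one submatrix will produce a subset $B(n)\subseteq\I(n)$ that witnesses a basis of $\Rad^n$ in both characteristics at once.

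First unpack the hypothesis: $p\nmid n_{W,n}=\lcm(d_{W,n},|W|)$ implies $p\nmid|W|$ and $p\nmid d_{W,n}$. From $p\nmid|W|$, \Cref{L: B0Bp} yields $\R_p=\R$, and the spanning statement preceding \Cref{L: basis for J^r} applies over both $\K$ and $\F$: the sets $\{x_{\un{J},\un{K}}:(\un{J},\un{K})\in\I(n)\}$ and $\{\bar x_{\un{J},\un{K}}:(\un{J},\un{K})\in\I(n)\}$ span $\Rad^n(\KD)$ and $\Rad^n(\FD)$ respectively. Set $s=\dim_\K\Rad^n(\KD)$; then $\rank[\I(n)]_\K=s$ by \Cref{L: rank}(ii). (If $s=0$, take $B(n)=\emptyset$ and the lemma is trivial; assume $s\ge 1$.)

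Next, since $d_{W,n}$ is the gcd of the $s{\times}s$ minors of $[\I(n)]$ and $p\nmid d_{W,n}$, at least one $s{\times}s$ minor $m\in\Z$ satisfies $p\nmid m$. Let $B(n)\subseteq\I(n)$ be the $s$ rows of such a submatrix. Over $\K$, the rows of $[\I(n)]_\K$ indexed by $B(n)$ are linearly independent (their projection onto the chosen columns has nonzero $\K$-determinant), so $\{x_{\un{J},\un{K}}:(\un{J},\un{K})\in B(n)\}$ is a linearly independent set of size $s$ in the $s$-dimensional space $\Rad^n(\KD)$, and is therefore a basis. Over $\F$, the same rows remain independent because the mod-$\pi$ reduction of $m$ is still nonzero.

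It remains to check $\dim_\F\Rad^n(\FD)=s$. The upper bound follows from the standard fact that the $\Q$-rank of $[\I(n)]$ is $s$, so every $(s{+}1){\times}(s{+}1)$ minor vanishes in $\Z$, hence in $\F$; thus $\rank[\I(n)]_\F\le s$, and by \Cref{L: rank}(ii) for $\F$ (which uses $p\nmid|W|$), $\dim_\F\Rad^n(\FD)\le s$. The lower bound comes from the $s$ independent vectors we just produced. Consequently, $\{\bar x_{\un{J},\un{K}}:(\un{J},\un{K})\in B(n)\}$ is a basis of $\Rad^n(\FD)$ and the equality of dimensions is immediate. The only genuine point is the simultaneous witness in the previous paragraph; everything else is bookkeeping around \Cref{L: B0Bp}, \Cref{L: basis for J^r}, and \Cref{L: rank}.
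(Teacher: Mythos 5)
Your argument is correct and pivots on the same central observation as the paper: $p\nmid d_{W,n}$ supplies an $s\times s$ minor of the integer matrix $[\I(n)]$ that survives reduction mod $p$, forcing $\rank[\I(n)]_\F = \rank[\I(n)]_\K$. The bookkeeping is organized differently, though. The paper first invokes \Cref{L: basis for J^r} (over $\F$) to produce a set $B(n)$ whose reductions form a basis of $\Rad^n(\FD)$, then shows by two separate inequalities that $|B(n)|$ also equals $\dim_\K\Rad^n(\KD)$, and concludes that the same $B(n)$ lifts to a basis over $\K$. You skip the appeal to \Cref{L: basis for J^r} and instead take $B(n)$ directly to be the rows of a nonvanishing minor; this simultaneously witnesses independence over both $\K$ and $\F$. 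For the upper bound $\dim_\F\Rad^n(\FD)\le s$ you use the fact that all $(s{+}1)\times(s{+}1)$ integer minors vanish once $\rank_\Q[\I(n)]=s$, whereas the paper extracts the same inequality from the rank of the specific submatrix $[B(n)]$. Both routes are valid; yours is marginally more self-contained (one fewer lemma quoted) but tacitly uses that $\K$ has characteristic zero so that $\rank_\K[\I(n)]=\rank_\Q[\I(n)]$ — harmless here, since the equal-characteristic case is degenerate, but worth noting given the paper's abstract setup for $\cO$.

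One small caveat on your handling of $s=0$: \Cref{D: dW} only defines $n_{W,n}$ when $\dim_\K\Rad^n(\KD)\neq 0$, so the hypothesis $p\nmid n_{W,n}$ already excludes $s=0$; and when $\Rad^n(\KD)=0$ one still needs a separate (easy, but not vacuous) reason that $\Rad^n(\FD)=0$. This is a corner case the paper's statement is also silent about, so it does not affect the comparison, but your parenthetical ``the lemma is trivial'' glosses over that the $\F$-side vanishing is not automatic from the $\K$-side vanishing without an extra line.
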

\begin{proof} Since $p\nmid |W|$, we have $\R_p=\R$. By \Cref{L: basis for J^r}, there is a subset $B(n)$ of $\I(n)$ such that $T:=\{\bar x_{\un{J},\un{K}}:(\un{J},\un{K})\in B(n)\}$ forms a basis for $\Rad^n(\FD)$. Let $[B(n)]_\F$ be the submatrix of $[\I(n)]_\F$ labelled by the rows $B(n)$. Let $s=|B(n)|$. There is a $(s\times s)$-submatrix of $[B(n)]_\F$ with non-zero determinant. Therefore, the corresponding submatrix of $[B(n)]$ has also non-zero determinant. As such, by \Cref{L: rank}(ii), we get \[\dim_\K \Rad^n(\KD)=\rank([\I(n)]_\K)\ge  \rank([B(n)]_\K)=s=\rank([\I(n)]_{\F})=\dim_\F \Rad^n(\FD).\]

Conversely, let $s'=\dim_\K \Rad^n(\KD)=\rank([\I(n)]_\K)$. By the assumption, $p\nmid d_{W,n}$, there exists a submatrix $D$ of $[\I(n)]_\K$ such that $\det(D)\not\equiv 0(\mod \pi)$. Since $D$ has entries belonging in $\cO$, taking modulo $\pi$, we obtain the corresponding submatrix $D_\F$ of $[\I(n)]_\F$. As such, $\det(D_\F)\neq 0$. We get $\dim_\F \Rad^n(\FD)\ge  s'$. Since $s=s'$, $B(n)$ is a desired subset.
\end{proof}

\begin{theorem}\label{th:descent}
Let $W$ be a finite Coxeter group and suppose that $p\nmid n_W$. Then $\OD$ satisfies \Cref{hyp:freeJ}. In particular, \Cref{th:main}(ii) and (iii) apply for $\OD$.
\end{theorem}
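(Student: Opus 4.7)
The plan is to verify \Cref{hyp:freeJ} for $\OD$ by combining condition~(v) of \Cref{le:Ofree-equiv} with the equivalence \Cref{hyp:freeJ2}~$\Leftrightarrow$~\Cref{hyp:basic}. The hypothesis $p\nmid n_W$ is set up precisely so that $p\nmid|W|$ and $p\nmid n_{W,n}$ for every $n\ge 1$ for which $d_{W,n}\ne 0$; for $n$ beyond the radical length of $\KD$ there is nothing to check since $\Rad^n(\KD)=0$. Consequently \Cref{L: rth radical basis} applies for every $n\ge 1$ and gives
\[ \dim_\K\Rad^n(\KD)=\dim_\F\Rad^n(\FD). \]

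Before invoking \Cref{le:Ofree-equiv} I would record the standing data. The algebra $\OD$ is $\cO$-free of finite rank with basis $\{x_J:J\subseteq S\}$. The ideal $\JA=\OD\cap\Rad(\KD)$ is nilpotent because $\JA^n\subseteq\Rad(\KD)^n=0$ for $n$ sufficiently large. Moreover $\hat\JA=\Rad(\KD)$ and $\bar\JA=\Rad(\FD)$: for $\K$ this is immediate from \Cref{T: Sol Radical} (the spanning set $\Ba$ of $\Rad(\KD)$ already lies in $\JA$), and for $\F$ it uses both $p\nmid|W|$ and \Cref{T: Sol Radical}. The implication (v)~$\Rightarrow$~(i) in \Cref{le:Ofree-equiv} now shows that $\OD/\JA^n$ is $\cO$-free for every $n\ge 1$.

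Next I would verify \Cref{hyp:basic} by way of \Cref{th:freeJ2<=>basic}: $\KD$ is basic by \Cref{T: Sol Radical}, $\OD$ is already $\cO$-free, and $\OD/\JA$ is $\cO$-free from the previous paragraph applied at $n=1$. For $\JA/\JA^2$, the quotient $\OD/\JA^2$ is $\cO$-free by the previous paragraph at $n=2$, and $\JA/\JA^2$ is a submodule of $\OD/\JA^2$; being finitely generated and torsion-free over the PID $\cO$, it is $\cO$-free. This establishes \Cref{hyp:freeJ2}, hence \Cref{hyp:basic}; combining with the $\cO$-freeness of $\OD/\JA^n$ for all $n$ yields \Cref{hyp:freeJ}. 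Parts (ii) and (iii) of \Cref{th:main} then apply directly to $\OD$.

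The main obstacle is not in this concluding assembly but in the groundwork done earlier: setting up the integer-entry matrix $[\I(n)]$, defining $n_W$ via its minors, and proving \Cref{L: rth radical basis} so that the radical dimensions of $\KD$ and $\FD$ can be forced to agree under a divisibility condition. Once that infrastructure is in place, the present theorem amounts to a careful book-keeping of which hypotheses are being imported and how they match the criteria of \Cref{le:Ofree-equiv} and \Cref{th:freeJ2<=>basic}.
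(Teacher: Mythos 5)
Your proof is correct and takes essentially the same route as the paper: invoke \Cref{L: rth radical basis} to get equality of radical dimensions over $\K$ and $\F$, then apply \Cref{le:Ofree-equiv}\,((v)~$\Rightarrow$~(i)). You are slightly more careful than the published proof, which asserts that \Cref{le:Ofree-equiv}\,((v)~$\Rightarrow$~(i)) already yields all of \Cref{hyp:freeJ}, whereas (i) only delivers the $\cO$-freeness of $\OD/\JA^n$ and \Cref{hyp:freeJ} also requires \Cref{hyp:basic}; your explicit check of \Cref{hyp:basic} via \Cref{hyp:freeJ2} and \Cref{th:freeJ2<=>basic}, together with the direct verification that $\bar\JA=\Rad(\FD)$ under $p\nmid|W|$ (needed to legitimately invoke part (v) of \Cref{le:Ofree-equiv}), closes a small gap the paper elides.
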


\begin{proof} Clearly, $\OD$ is $\cO$-free of finite rank. By \Cref{L: rth radical basis}, we have $\dim_\K \Rad^n(\KD)=\dim_\F \Rad^n(\FD)$ for all $n\ge  0$. Using \Cref{le:Ofree-equiv} ((v) $\Rightarrow$ (i)), we obtain that $\OD$ satisfies our strongest hypothesis \Cref{hyp:freeJ}. As such, by \Cref{th:main}(i), the descent algebra satisfies the remaining hypotheses in \Cref{S:Intro}. In particular,  \Cref{th:main}(ii) and (iii) apply for $\OD$.
\end{proof}

\section{Independence of choice of basis for $n_W$}\label{S: indep}

In \Cref{D: dW}, we defined the numbers $d_{W,n}$ and $n_W$ based on
the choice of the basis $\B$ within $\I(1)$ (see \Cref{Eq: Basis
  B}). In this section, we address the question whether these numbers
are independent of the choice $\B$. The answer is positive and it is
given in \Cref{C: indep}. We shall begin with the discussion on
totally unimodular matrices.

A matrix $A$ over $\Z$ is said to have the consecutive-ones property
if $A$ is a 0-1 matrix (that is, its entries can only take values 0 or
1) and there is a  rearrangement of the columns of $A$ so that, in
each row, the ones appear consecutively. It is known that such a matrix
is totally unimodular, that is, every square non-singular submatrix of
$A$ has determinant $\pm1$ (see \cite{Fulkerson/Gross:1965a}). It follows that we have
the following lemma.

\begin{lemma}\label{L: consec 1} Suppose that $A$ is a square non-singular matrix with the consecutive-ones property. Then every entry of $A^{-1}$ can only take values $0$, $1$ or $-1$.
\end{lemma}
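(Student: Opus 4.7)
The plan is to invoke total unimodularity and then read off the entries of $A^{-1}$ via Cramer's rule.

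First I would use the cited fact that the consecutive-ones property implies total unimodularity \cite{Fulkerson/Gross:1965a}, so \emph{every} square submatrix of $A$ has determinant in $\{-1,0,1\}$. In particular, applying this to $A$ itself, since $A$ is non-singular we get $\det(A)=\pm 1$.

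Next I would apply Cramer's rule in the form $A^{-1}=\det(A)^{-1}\operatorname{adj}(A)$, so that for each pair of indices $(i,j)$,
\[
(A^{-1})_{ij}=\frac{(-1)^{i+j}\det(A^{(j,i)})}{\det(A)},
\]
where $A^{(j,i)}$ is the submatrix obtained by deleting row $j$ and column $i$ of $A$. Since $A^{(j,i)}$ is itself a square submatrix of $A$, total unimodularity gives $\det(A^{(j,i)})\in\{-1,0,1\}$. Combined with $\det(A)=\pm 1$, this shows $(A^{-1})_{ij}\in\{-1,0,1\}$, completing the proof.

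There is essentially no obstacle here: the content of the lemma is packaged entirely inside the cited consecutive-ones $\Rightarrow$ totally unimodular result, and the rest is the standard adjugate formula. The only thing to be careful about is making sure one states that total unimodularity applies not just to nonsingular submatrices (as in the definition recalled just before the lemma) but to all square submatrices including singular ones—this is immediate, since singular ones have determinant $0\in\{-1,0,1\}$.
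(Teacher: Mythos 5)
Your proof is correct and takes essentially the same approach as the paper's: invoke total unimodularity to get that all cofactors lie in $\{-1,0,1\}$ and that $\det(A)=\pm 1$, then read off the entries of $A^{-1}$ from the adjugate formula. The only difference is that you spell out the Cramer's-rule step and the remark about singular submatrices explicitly, whereas the paper states these more tersely.
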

\begin{proof} This follows from the totally unimodularity that the cofactor matrix of $A$ admits entries with values $0$, $1$ or $-1$. Furthermore, $\det(A)=\pm1$. Therefore, we get the desired property for $A^{-1}$.
\end{proof}

We say that a matrix $A$ over $\Z$ has the signed consecutive-ones property if there exists a diagonal matrix $D$ with diagonal entries $\pm1$ and a matrix $B$ with consecutive-ones property such that $A=DB$, that is, every row of $A$ has the consecutive-ones or consecutive-minus-ones property. Since $A^{-1}=B^{-1}D$, we get the following.

\begin{corollary}\label{C: signed consec} Suppose that $A$ is a square non-singular matrix with the signed consecutive-ones property. Then every entry of $A^{-1}$ can only take values $0$, $1$ or $-1$.
\end{corollary}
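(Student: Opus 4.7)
The plan is to exploit the factorization $A = DB$ directly and invoke the preceding lemma on $B$. Since $D$ is a $\pm 1$-diagonal matrix, it is its own inverse, so
\[ A^{-1} = B^{-1} D^{-1} = B^{-1} D. \]
This reduces the problem to understanding the entries of $B^{-1}$ and then observing that right-multiplication by $D$ cannot enlarge the set of possible values.

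First I would note that since $A$ is non-singular and $D$ is non-singular, $B = D^{-1}A$ is also non-singular, so \Cref{L: consec 1} applies to $B$: every entry of $B^{-1}$ lies in $\{0, 1, -1\}$. Then, because $D$ is diagonal with diagonal entries $\pm 1$, the matrix $B^{-1}D$ is obtained from $B^{-1}$ by multiplying each column by $\pm 1$. In particular, each entry of $A^{-1} = B^{-1}D$ is $\pm 1$ times a corresponding entry of $B^{-1}$, and so still lies in $\{0, 1, -1\}$.

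There is no genuine obstacle here; the statement is essentially a bookkeeping consequence of \Cref{L: consec 1} combined with the fact that $\{0,1,-1\}$ is closed under multiplication by $\pm 1$. The only point worth checking carefully is that the factorization $A = DB$ with $B$ having the consecutive-ones property really does force $B$ to be non-singular (so that the earlier lemma applies), which is immediate from $\det(B) = \det(D)^{-1}\det(A) = \pm\det(A) \neq 0$.
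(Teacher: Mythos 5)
Your argument is correct and is exactly the paper's: factor $A=DB$, note $D^{-1}=D$ so $A^{-1}=B^{-1}D$, apply \Cref{L: consec 1} to $B$, and observe that column sign changes preserve $\{0,\pm1\}$. Nothing to add.
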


In view of \Cref{D: dW}, for each non-zero matrix $M$ over $\Z$ of size $(m\times n)$ and $s$ be a positive integer not more than the rank of $M$, we write $\gcd_s(M)$ for the greatest common divisor of the determinants of all the $(s\times s)$-submatrices of $M$.

\begin{lemma}\label{L: gcd div gcd} Let $M$ be a non-zero $(m\times n)$-matrix over $\Z$, $s\le \rank(M)$, $c_1,\ldots,c_t$ be distinct numbers in $[1,n]$ for some positive integer $t$ and $(b_{ij})$ be an $(t\times t)$-matrix over $\Z$. Suppose that $M'$ is the matrix obtained by replacing, for each $j\in [1,t]$ and $i\in [1,m]$, the $(i,c_j)$-entry of $M$ by $\sum_{k=1}^tM_{i,c_k}b_{kj}$, that is, $M'=MA$ where $A$ is obtained from the identity matrix of size $n$ by replacing its $(c_i,c_j)$-entry by $b_{ij}$.  Then $\gcd_s(M)\mid \gcd_s(M')$.
\end{lemma}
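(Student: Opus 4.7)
The plan is to exploit the fact that $M' = MA$ where $A$ differs from the identity only in the submatrix indexed by $(c_i, c_j)$, so all columns of $M'$ outside positions $c_1, \dots, c_t$ agree with the corresponding columns of $M$, while for $j \in [1,t]$ the column of $M'$ at position $c_j$ equals
\[
\text{col}_{c_j}(M') \;=\; \sum_{k=1}^{t} b_{kj}\,\text{col}_{c_k}(M).
\]
From this, every $(s \times s)$ submatrix $N'$ of $M'$ decomposes column by column: columns coming from positions outside $\{c_1,\dots,c_t\}$ are literal restrictions of columns of $M$, while columns coming from positions in $\{c_1,\dots,c_t\}$ are $\Z$-linear combinations of restrictions of the columns $c_1,\dots,c_t$ of $M$ to the chosen row set.

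The key step is then to expand $\det(N')$ using multilinearity of the determinant in exactly those columns whose position lies in $\{c_1,\dots,c_t\}$. Each term of this expansion is (up to a sign and a product of some $b_{kj}$'s, hence a coefficient in $\Z$) the determinant of an $(s \times s)$ submatrix of $M$ with the same row set as $N'$ and column set obtained by replacing each position in $\{c_1,\dots,c_t\} \cap C$ by some $c_k$. Terms in which two of these replacements coincide produce a matrix with a repeated column and contribute $0$. Hence
\[
\det(N') \;=\; \sum_{C'} \lambda_{C'}\, \det\bigl(M[R,\,C']\bigr), \qquad \lambda_{C'}\in\Z,
\]
where the sum is over certain $s$-subsets $C'$ of $[1,n]$ and $R$ is the row set of $N'$.

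Since each $\det(M[R,C'])$ on the right is an $(s \times s)$ minor of $M$, it is divisible by $\gcd_s(M)$. Therefore $\gcd_s(M) \mid \det(N')$ for every $(s \times s)$ submatrix $N'$ of $M'$, and so $\gcd_s(M) \mid \gcd_s(M')$, as required. There is no real obstacle here beyond bookkeeping for the multilinear expansion; the statement is essentially a special case of the Cauchy--Binet philosophy that minors of a product are integer combinations of minors of the factors, and the assumption $s \le \rank(M)$ is only needed to ensure that $\gcd_s(M)$ is meaningfully defined.
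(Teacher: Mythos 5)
Your proof is correct and follows essentially the same route as the paper: fix a row set, observe that the columns of $M'$ in positions $c_1,\dots,c_t$ are $\Z$-linear combinations of the corresponding columns of $M$, expand any $(s\times s)$ minor of $M'$ by multilinearity in those columns (noting repeated columns kill a term), and conclude that every $s$-minor of $M'$ is an integer combination of $s$-minors of $M$. The Cauchy--Binet framing you mention at the end is a fair gloss but adds nothing to the argument, which both you and the authors carry out directly.
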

\begin{proof} Let $d=\gcd_s(M)$ and, for subsets $I\subseteq [1,m]$ and $J\subseteq [1,n]$ of size $s$, we write $M_{IJ}$ for the corresponding $(s\times s)$-submatrix of $M$. Similarly, for $M'$. Furthermore, let \[\Gamma=\{(I,J):I\subseteq [1,m],\ J\subseteq [1,n],\ |I|=s=|J|\}.\] Consider $(I,J)\in\Gamma$ and let $J\cap \{c_j:j \in [1,t]\}=\{j_1,\ldots,j_\ell\}$. By multilinear and alternating properties of determinant, we have
\begin{align*}
\det(M'_{IJ})&=\sum_{c_j\in J}\sum_{(k_1,\ldots,k_\ell)}b_{k_1j_1}\cdots b_{k_\ell j_\ell}\det(M_I(k_1,\ldots,k_\ell))
\end{align*} where the second sum is taken over all $(k_1,\ldots,k_\ell)\in [1,t]^\ell$ and $M_I(k_1,\ldots,k_\ell)$ is the matrix obtained from $M_{IJ}$ by replacing the columns correspond to $c_{j_1},\ldots,c_{j_\ell}$ by the columns correspond to $c_{k_1},\ldots,c_{k_\ell}$ in the submatrix $M_{I,[1,n]}$. Notice that \[\det(M_I(k_1,\ldots,k_\ell))=\left \{\begin{array}{ll} 0 &\text{if $k_r=k_{r'}$ for some $r\neq r'$,}\\ \det(M_{I,K})& \text{otherwise,}\end{array}\right .\] where $K$ is obtained from $J$ by removing $c_{j_1},\ldots,c_{j_\ell}$ and then adding the distinct $c_{k_1},\ldots,c_{k_\ell}$. As such, $d\mid \det(M'_{IJ})$ and hence $d\mid \gcd_s(M')$.
\end{proof}

We obtain the following corollary addressing the independence of $n_W$ of the choice of basis within $\I(1)$.

\begin{corollary}\label{C: indep} For each positive integer $n$, the number $d_{W,n}$ is independent of the basis within $\I(1)$ we have chosen in \Cref{D: dW}. In particular, the same holds true for $n_W$.
\end{corollary}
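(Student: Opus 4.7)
The plan is to show that replacing the basis $\Ba$ of $\Rad(\KD)$ by any other basis $\Ba'$ consisting of differences $x_J - x_K$ with $(J,K) \in \I(1)$ leaves $d_{W,n}$ unchanged. Independence of $n_W$ then follows immediately from the identities $n_{W,n} = \lcm(d_{W,n}, |W|)$ and $n_W = \lcm\{n_{W,n} : n \le \ell\}$.

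First I would decompose $\Rad(\KD) = \bigoplus_{J \in \R} V_J$, where $V_J = \Sp_\K\{x_{J'} - x_{J''} : J', J'' =_W J\}$ has dimension $n_J - 1$. Any basis $\Ba'$ drawn from $\I(1)$ must contribute exactly $n_J - 1$ linearly independent elements to each $V_J$ by a dimension count, so the change-of-basis matrix $C$ between $\Ba$ and $\Ba'$ is block-diagonal along the Coxeter classes. Within a single class with ordered representatives $J_1,\dots,J_{n_J}$, the corresponding block of $\Ba$ is the chain $b_i = x_{J_i} - x_{J_{i+1}}$, and every $x_{J_a} - x_{J_b}$ in this block telescopes as $\sum_{k=a}^{b-1} b_k$ when $a < b$ (or its negative when $a > b$). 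In the natural ordering of the chain, each column of $C$ thus consists of zeros together with either consecutive $+1$'s or consecutive $-1$'s; equivalently, $C^T$ satisfies the signed consecutive-ones property block-wise, and \Cref{C: signed consec} yields that both $C$ and $C^{-1}$ have entries in $\{-1,0,1\}$.

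Finally I apply \Cref{L: gcd div gcd}. Writing $[\I(n)]' = [\I(n)] \cdot M$ with $M$ the appropriate reindexing matrix built from $C$ (so that $M$ and $M^{-1}$ are both integer matrices by the previous step), the lemma applied with $t$ equal to the total number of columns gives $\gcd_s([\I(n)]) \mid \gcd_s([\I(n)]')$ and, by the symmetric application to $[\I(n)] = [\I(n)]' \cdot M^{-1}$, the reverse divisibility, where $s = \dim_\K \Rad^n(\KD)$ is the common rank. Hence $d_{W,n}$ is basis-independent, and so is $n_W$. The main obstacle is the middle paragraph, namely the block-diagonal reduction and verification of the signed consecutive-ones property for an arbitrary basis from $\I(1)$; once this structural input is in place, the unimodularity statements of \Cref{L: consec 1} and \Cref{C: signed consec} together with \Cref{L: gcd div gcd} make the conclusion essentially mechanical.
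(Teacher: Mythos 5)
Your proof follows the same strategy as the paper: use the telescoping relations to show the transition matrix between any two bases drawn from $\I(1)$ has the signed consecutive-ones property, invoke \Cref{C: signed consec} to get $\{0,\pm1\}$ entries for both it and its inverse, and then apply \Cref{L: gcd div gcd} in both directions to force the two gcds to divide each other. The only substantive addition is your explicit justification that the change-of-basis matrix is block-diagonal along Coxeter classes (the paper takes this as obvious from the telescoping formulas), which is a welcome clarification but does not constitute a different route.
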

\begin{proof} By \Cref{L: rank}(i), we obtain that $M:=[\I(n)]$ has integer entries. Let $\B'$ be another subset of $\I(1)$ such that $\Ba'$ forms another basis of $\Rad(\Des_\Q)$ and let $M':=[\I'(n)]$ be the matrix with respect to $\Ba'$ which also has integer entries. For each $J\subseteq S$, recall from \Cref{S: proj}, $J_1,\ldots,J_{n_J}$ are the distinct subsets of $S$ such that $J_i=_W J$ for each $i\in[1,n_J]$. For each $x_{J_i,J_j}\in T$, we have \[x_{J_i,J_j}=\left \{\begin{array}{ll}x_{J_i,J_{i+1}}+\cdots+x_{J_{j-1},J_j}&\text{if $i<j$,}\\ -(x_{J_i,J_{i+1}}+\cdots+x_{J_{j-1},J_j})&\text{if $i>j$.}\end{array}\right .\] Therefore, the transition matrix $A$ from $\Ba$ to $\Ba'$ has the signed consecutive-ones property. By \Cref{C: signed consec}, $A^{-1}$ has entries in $\{0,1,-1\}$. Notice that we can rearrange the columns of $M$ and $M'$ so that $M=M'A$. Let $s$ be the rank of $\Q\otimes_\Z M$ (or $\Q\otimes_\Z M'$). Applying \Cref{L: gcd div gcd} twice, we have \[{\gcd}_s(M)\mid {\gcd}_s(M')\mid {\gcd}_s(M)\] which forces the equality ${\gcd}_s(M)= {\gcd}_s(M')$. Therefore $d_{W,n}$ does not depend on the choice of basis within $\I(1)$.
\end{proof}

\section{nilCoxeter Algebras satisfy \Cref{hyp:freeJ}}\label{S:nilCox}

In this section, we show that the nilCoxeter algebra satisfies our strongest hypothesis. The algebra originates in \cite[Theorem 3.4]{Bernstein/Gelfand/Gelfand:1973a}. Subsequently, for example, it has been used in the study of the Schubert polynomials in \cite{Fomin/Stanley:1994a} and studied for its categorification aspect in \cite{Khovanov:2001a}.

Fix a finite Coxeter system $(W,S)$. For any integral domain $R$, we define the nilCoxeter algebra $\Nil_R(W)$ as the $R$-algebra generated by $\{x_s:s\in S\}$ subject to the relations $x_s^2=0$ and \[\underbrace{x_sx_{s'}x_s\cdots}_{\text{$m$ factors}}=\underbrace{x_{s'}x_sx_{s'}\cdots}_{\text{$m$ factors}}\] for distinct $s,s'\in S$ and where $m$ is the order of $ss'$. As a consequence, $\Nil_R(W)$ has an $R$-basis $\{x_w:w\in W\}$ such that \[x_ux_v=\left \{\begin{array}{ll} x_{uv}&\text{if $\ell(uv)=\ell(u)+\ell(v)$,}\\ 0&\text{otherwise.}\end{array}\right .\] Since $x_w$ is nilpotent for any $1\neq w\in W$, it belongs in $\Rad(\Nil_\k(W))$. As such, $\Nil_\k(W)$ is basic with a single simple module. In general, the $n$th radical layer $\Rad^n(\Nil_\k(W))$ has a basis $\{x_w:\ell(w)\geq n\}$.

\begin{lemma}\label{L:RadCox} Let $A=\Nil_\k(W)$ and $T$ be the unique simple $A$-module. The projective cover $P_T$ of $T$ is the regular $A$-module and
\begin{align*}
  [\Rad^n(P_T)/\Rad^{n+1}(P_T):T]&=\dim_\k \Rad^n(A)/\Rad^{n+1}(A)=|\{w\in W:\ell(w)=n\}|.
\end{align*}
\end{lemma}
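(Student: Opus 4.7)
The plan is a direct verification using the basis description stated just before the lemma. Since $A = \Nil_\k(W)$ is basic with a unique simple module $T$, every primitive idempotent in $A/\Rad(A) \cong \k$ lifts to a primitive idempotent in $A$; the only such primitive idempotent is $1$ itself. Hence $A$ is indecomposable as a left $A$-module and $A/\Rad(A) \cong T$, so $A$ is the projective cover of $T$, i.e.\ $P_T \cong A$.

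Having identified $P_T$ with $A$, I would use the preceding description: the set $\{x_w : \ell(w) \geq n\}$ is a $\k$-basis of $\Rad^n(A)$. This is essentially immediate from the two facts already in hand, namely that $x_w \in \Rad(A)$ for every $1 \neq w \in W$ and that the product $x_u x_v$ is either $x_{uv}$ (when lengths add) or $0$. To make the basis assertion precise, I would observe (i) the $\k$-span of $\{x_w : \ell(w) \geq n\}$ is contained in $\Rad^n(A)$ because any product of $n$ generators $x_{s_1} \cdots x_{s_n}$ is either $0$ or $x_w$ for some $w$ with $\ell(w) \leq n$; and conversely that every $x_w$ with $\ell(w) \geq n$ arises as such a product from any reduced expression of a length-$n$ left factor of $w$, which places it in $\Rad^n(A)$. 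Combined with linear independence of the $x_w$ this yields the claimed basis.

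It follows that the image of $\{x_w : \ell(w) = n\}$ is a $\k$-basis of the layer $\Rad^n(A)/\Rad^{n+1}(A)$, so $\dim_\k \Rad^n(A)/\Rad^{n+1}(A) = |\{w \in W : \ell(w) = n\}|$. Finally, since $\dim_\k T = 1$, the multiplicity of $T$ in this semisimple layer equals its dimension, and the required equality
\[
[\Rad^n(P_T)/\Rad^{n+1}(P_T) : T] = |\{w \in W : \ell(w) = n\}|
\]
follows.

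There is no genuine obstacle here; the lemma is essentially a bookkeeping consequence of the length-additive multiplication rule for the $x_w$. The only point that requires a sentence of care is the identification $P_T \cong A$, which uses that $\Nil_\k(W)$ is basic with a single simple so that $1$ is primitive.
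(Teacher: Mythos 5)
Your proof is correct and takes essentially the same route as the paper, which states the lemma without proof as an immediate consequence of the observation that $\Rad^n(\Nil_\k(W))$ has basis $\{x_w : \ell(w)\ge n\}$; you simply make that basis claim and the identification $P_T\cong A$ explicit. One small slip: in justifying $\Rad^n(A)\subseteq\operatorname{span}\{x_w:\ell(w)\ge n\}$ you write that a nonzero product $x_{s_1}\cdots x_{s_n}$ equals $x_w$ with $\ell(w)\le n$, but the multiplication rule forces $\ell(w)=n$ exactly (each step either kills the product or raises the length by one), and it is the lower bound that you need here; the statement should also be applied to products of $n$ arbitrary elements of $\Rad(A)$, which by bilinearity reduce to $x_{w_1}\cdots x_{w_n}$ with each $\ell(w_i)\ge 1$ and hence give $x_w$ with $\ell(w)\ge n$ or zero.
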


Similar as before, let $\ON:=\Nil_\cO(W)$, $\KN:=\Nil_\K(W)$, and $\FN:=\Nil_\F(W)$.

\begin{theorem}\label{T:nilCox} Let $W$ be a Coxeter group. The algebra $\ON$ satisfies \Cref{hyp:freeJ}.
\end{theorem}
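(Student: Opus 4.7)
The plan is to give an explicit description of the powers of $\JA$ inside $\ON$ and then invoke \Cref{le:Ofree-equiv}. For each $n \ge 0$, set
\[
\ON_{\ge n} = \Sp_\cO\{x_w : w\in W,\ \ell(w)\ge n\}.
\]
I claim that $\JA^n = \ON_{\ge n}$ for all $n$. One inclusion is automatic: each length-$n$ element $w$ has a reduced expression $w=s_1\cdots s_n$, so $x_w = x_{s_1}\cdots x_{s_n} \in \JA^n$ because each $x_{s_i}$ lies in $\JA$ (it is nilpotent, and nilpotent elements of $\ON$ lie in $\ON\cap\Rad(\KN)=\JA$). For the reverse inclusion, \Cref{L:RadCox} applied over $\K$ gives $\Rad^n(\KN) = \Sp_\K\{x_w:\ell(w)\ge n\}$, and $\JA^n \subseteq \ON\cap\Rad^n(\KN)$, whose $\cO$-lattice in the basis $\{x_w\}_{w\in W}$ is exactly $\ON_{\ge n}$.

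Given this identification, each successive quotient $\JA^n/\JA^{n+1}$ is $\cO$-free with basis $\{x_w + \JA^{n+1} : \ell(w)=n\}$. In particular, $\AO/\JA$ (rank one) and $\JA/\JA^2$ (rank $|S|$) are $\cO$-free, and $\KN$ is basic by the remark preceding \Cref{L:RadCox}, so \Cref{hyp:freeJ2} is satisfied. By \Cref{th:freeJ2<=>basic}, this gives \Cref{hyp:basic}. Next, since every $\JA^n/\JA^{n+1}$ is $\cO$-free, the implication (ii)$\Rightarrow$(i) of \Cref{le:Ofree-equiv} (applied with $J=\JA$, which is nilpotent because $W$ is finite) shows that $\AO/\JA^n$ is $\cO$-free for every $n\ge 1$.

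Combining these two conclusions gives precisely \Cref{hyp:freeJ}. There is no genuine obstacle; the only point requiring care is the identification $\JA = \ON\cap\Rad(\KN) = \ON_{\ge 1}$, which relies on \Cref{L:RadCox} over $\K$ together with the fact that the basis $\{x_w\}$ of $\ON$ is simultaneously a $\K$-basis of $\KN$, so that intersecting with the $\cO$-lattice amounts to restricting coefficients.
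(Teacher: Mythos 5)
Your proof is correct and takes a genuinely more explicit route than the paper's. The paper argues by dimension counting: Lemma~\ref{L:RadCox} (applied over both $\K$ and $\F$) gives
$\dim_\K\Rad^n(\KN)/\Rad^{n+1}(\KN)=|\{w:\ell(w)=n\}|=\dim_\F\Rad^n(\FN)/\Rad^{n+1}(\FN)$,
after which it invokes implication (v)\,$\Rightarrow$\,(i) of \Cref{le:Ofree-equiv}. You instead identify the $\cO$-lattice $\JA^n = \Sp_\cO\{x_w : \ell(w)\ge n\}$ directly, so that each $\JA^n/\JA^{n+1}$ is visibly $\cO$-free with basis $\{x_w : \ell(w)=n\}$, and then use implication (ii)\,$\Rightarrow$\,(i). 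This buys two things: you avoid the numerical comparison over the two fields, and you explicitly establish that $\ON$ satisfies \Cref{hyp:basic} (via \Cref{hyp:freeJ2} and \Cref{th:freeJ2<=>basic}), which is a stated part of \Cref{hyp:freeJ} that the paper's one-line proof takes for granted.

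One small repair in the justification of $\ON_{\ge n}\subseteq\JA^n$: the parenthetical claim that nilpotent elements of $\ON$ lie in $\ON\cap\Rad(\KN)$ is not a general fact about algebras (nilpotent matrices in $M_2(\K)$ do not lie in the radical). It holds here because $\KN$ is local, but it is cleaner simply to cite the description of the radical given just before \Cref{L:RadCox}: $\Rad(\KN)=\Sp_\K\{x_w : w\ne 1\}$, so each $x_s\in\ON\cap\Rad(\KN)=\JA$. With that substitution the argument is complete.
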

\begin{proof} By \Cref{L:RadCox}, we get $\dim_\F \Rad^n(\FN)/\Rad^{n+1}(\FN)=\dim_\K \Rad^n(\KN)/\Rad^{n+1}(\KN)$. Using \Cref{le:Ofree-equiv} ((v) $\Rightarrow$ (i)), we obtain that $\ON$ satisfies our strongest hypothesis \Cref{hyp:freeJ}.
\end{proof}

Let $T$ be the unique simple $\Nil_\k(W)$-module. \Cref{T:nilCox,th:main} imply that both $\dim_\k\Ext^i_{\Nil_\k(W)}(T,T)$ and hence the Hilbert–Poincar\'{e} series for the Ext algebra $\Ext^*_{\Nil_\k(W)}(T,T)$ are independent of the field $\k$. In the recent article \cite{Benson:nilCoxeter}, the first author studied the type $\mathbb{A}$ case. He computed the Hilbert–Poincar\'{e} series and presented a set of generators and relations for the Ext algebra. We refer the readers to the article for the full statement.

\begin{theorem}[{[\cite[Theorem 1.1]{Benson:nilCoxeter}}] Suppose that $n\ge 2$. The Ext algebra $\Ext^*_{\Nil_\Z(\sym{n})}(\Z,\Z)$ has the Hilbert–Poincar\'{e} series \[\sum_{i=0}^\infty t^i\left (\mathsf{rank}_\Z\Ext^i_{\Nil_\Z(\sym{n})}(\Z,\Z)\right )=\frac{1}{(1-t)^{n-1}}.\]
\end{theorem}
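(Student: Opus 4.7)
The plan is to reduce the computation to a single characteristic via the framework established in this paper, and then construct an explicit minimal projective resolution whose ranks yield the desired Hilbert series.

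First, I would invoke \Cref{T:nilCox} together with \Cref{th:main}(ii). Taking $\cO = \Zp$ for any prime $p$, the former shows that $\Nil_\cO(\sym{n})$ satisfies \Cref{hyp:freeJ}, and hence \Cref{hyp:Ext}, so $\Ext^i_{\Nil_\cO(\sym{n})}(\cO, \cO)$ is finitely generated and $\cO$-free of rank equal to both $\dim_\Q \Ext^i_{\Nil_\Q(\sym{n})}(\Q, \Q)$ and $\dim_{\F_p} \Ext^i_{\Nil_{\F_p}(\sym{n})}(\F_p, \F_p)$. Combining this with flat base change from $\Z$ to $\Zp$ gives
\[
\rank_\Z \Ext^i_{\Nil_\Z(\sym{n})}(\Z, \Z) \;=\; \dim_\Q \Ext^i_{\Nil_\Q(\sym{n})}(\Q, \Q),
\]
so it suffices to prove the Hilbert-series formula over $\Q$.

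Next, I would construct a minimal projective resolution $P_\bullet \to \Q$ of the unique simple $\Nil_\Q(\sym{n})$-module with $\rank_\Q P_k = n!\, \binom{k+n-2}{n-2}$, so that $\dim \Ext^k = \binom{k+n-2}{n-2}$. The generators of $P_k$ would be indexed combinatorially by weakly increasing sequences $1 \le i_1 \le \cdots \le i_k \le n-1$, equivalently, monomials of degree $k$ in $n-1$ commuting variables, whose count is $\binom{k+n-2}{n-2}$. The differential $d_k$ would send such a generator $\eta_{i_1, \ldots, i_k}$ to a signed combination $\sum_j (-1)^{j-1} x_{i_j}\, \eta_{i_1, \ldots, \widehat{i_j}, \ldots, i_k}$, with additional correction terms arising from the braid relations.

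A natural approach is by induction on $n$. The base case $n = 2$ is immediate: $\Nil_\Q(\sym{2}) = \Q[x]/(x^2)$ has periodic minimal resolution $\cdots \xrightarrow{x} A \xrightarrow{x} A \to \Q$, giving $\dim \Ext^k = 1$ for all $k$ and Hilbert series $1/(1-t)$, matching $n - 1 = 1$. For the inductive step, one exploits the decomposition of $\Nil_\Q(\sym{n})$ as a free left $\Nil_\Q(\sym{n-1})$-module of rank $n$ using the minimal coset representatives of $\sym{n-1}$ in $\sym{n}$. The factorization $1/(1-t)^{n-1} = (1-t)^{-1} \cdot (1-t)^{-(n-2)}$ suggests a Künneth- or spectral-sequence-type decomposition: one builds the resolution as a twisted tensor product of the resolution for $\sym{n-1}$ with a resolution accounting for the new generator $x_{n-1}$.

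The main obstacle is managing the cubic braid relations $x_i x_{i+1} x_i = x_{i+1} x_i x_{i+1}$, which prevent $\Nil_\Q(\sym{n})$ from being quadratic (hence Koszul in the strict sense). The resolution must be arranged so that these cubic relations do not inflate the rank at any homological degree beyond $\binom{k+n-2}{n-2}$. A promising strategy is to regard $\Nil_\Q(\sym{n})$ as a PBW-type deformation of its quadratic cover (retaining only $x_i^2 = 0$ and $x_i x_j = x_j x_i$ for $|i - j| > 1$), construct a Koszul-style resolution for the cover, and then deform the differential to absorb the braid corrections. Verifying that no unexpected syzygies appear at each stage reduces to a direct combinatorial check using the Bruhat-length filtration and the combinatorics of reduced words in $\sym{n}$.
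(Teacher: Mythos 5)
This theorem is not proved in the paper under review: it is imported from \cite{Benson:nilCoxeter}, and the present paper only observes (via \Cref{T:nilCox} and \Cref{th:main}) that the Hilbert--Poincar\'e series is independent of the ground field, then refers the reader to the other article for the computation. So there is no ``paper's own proof'' here to compare against. Your first step --- using \Cref{T:nilCox} plus \Cref{th:main}(ii) over $\Zp$ together with flat base change to reduce the statement over $\Z$ to the statement over $\Q$ --- is correct and is precisely the observation made in the present paper.

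The rest of the proposal, however, is a sketch with genuine gaps, which you partly acknowledge but do not close. Concretely: (1) you assert that a minimal projective resolution exists with $\rank_\Q P_k = n!\binom{k+n-2}{n-2}$ and generators indexed by weakly increasing sequences, with a differential of the stated shape ``plus correction terms from the braid relations,'' but you never verify that this complex is exact, nor that the putative differential lands inside the radical so that minimality holds. Asserting the ranks is tantamount to asserting the theorem. (2) The PBW/Koszul-deformation strategy has a structural problem: the ``quadratic cover'' you propose (keeping only $x_i^2=0$ and $x_ix_j=x_jx_i$ for $|i-j|>1$, dropping the braid relations) is \emph{infinite-dimensional}, since nothing then truncates words alternating in $x_i, x_{i+1}$. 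It is not the exterior algebra on $n-1$ generators, and it is not clear that its Koszul complex deforms to a resolution for $\Nil_\Q(\sym{n})$ of the required size. Moreover, a direct count already shows the na\"ive quadratic picture fails at homological degree 2: the coefficient of $t^2$ in $(1-t)^{-(n-1)}$ is $\binom{n}{2}$, while the number of quadratic relations among $x_1,\dots,x_{n-1}$ is $(n-1) + \binom{n-2}{2}$, which differs by $n-2$ --- exactly the number of cubic braid relations, which must therefore contribute to $\Ext^2$ in a non-Koszul way. Any argument has to account for this explicitly. (3) The inductive step via the $\Nil_\Q(\sym{n-1})$-module structure is plausible and the Hilbert-series factorization $(1-t)^{-(n-1)} = (1-t)^{-1}\cdot(1-t)^{-(n-2)}$ is suggestive, but ``twisted tensor product of resolutions'' and ``spectral-sequence-type decomposition'' are programmatic phrases, not a proof; one would need to exhibit the change-of-rings spectral sequence (or an explicit bicomplex), show it degenerates, and control the $E_2$-page. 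As it stands the proposal identifies the right reduction and a sensible target for the resolution, but the combinatorial heart of the argument --- proving exactness and minimality in the presence of the cubic braid relations --- is left open.
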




\iffalse

\section{g{The face semigroup algebras satisfy \Cref{hyp:basic}}
{The face semigroup algebras satisfy Hypothesis \ref{hyp:basic}}}\label{S:facesemigroup}

In this section, we prove that the face semigroup algebras also
satisfy \Cref{hyp:basic}. The face semigroup algebras are defined by
hyperplane arrangements in real spaces. The algebras are closely
related to the descent algebras in the following sense as proved by
Bidigare (see \cite{Bidigare:1997a}). When $W$ is a reflection group,
the face semigroup algebra defined by the hyperplanes corresponding to
the root system of $W$ has a natural $W$-action. The fixed point space
is isomorphic to the opposite of the descent algebra of $W$. We first
give a brief account of the subject and prove that the face semigroup
algebras satisfy \Cref{hyp:basic}.

Consider a finite collection $\A$ of linear hyperplanes containing the
origin of $\Re^d$  for some positive integer $d$. For each $H\in\A$,
we fix the two open half-spaces of $\Re^d$ and denote them by $H^+$
and $H^-$. Also, we write $H^0=H$. A face $x$ of $\A$ is a non-empty
intersection of the form \[x=\bigcap_{H\in\A} H^{\sigma_H(x)}\] where
$\sigma_H(x)\in\{+,-,0\}$ for each $H\in\A$. The sequence
$\sigma(x)=(\sigma_H(x))_{H\in\A}$ is called the sign sequence of
$x$. The face is a chamber if $\sigma_H(x)\neq 0$ for all
$H\in\A$. Let $\Fa$ be the set consisting of all the faces of $\A$. On
the other hand, let $\Ed$ be the set consisting of the edges of $\A$,
that is, the intersections of hyperplanes in $\A$. We allow trivial
intersection so that $\Re^d\in\Ed$. The set $\Ed$ is partially ordered
$\Rleq$ by the reverse inclusion, i.e., $X\Rleq Y$ if and only if
$Y\subseteq X$. The map $\supp:\Fa\to \Ed$ given
by \[\supp(x)=\bigcap_{\substack{H\in\A,\\ \sigma_H(x)=0}}H\] is
surjective.

The set $\Fa$ is a semigroup with multiplication given by, for $x,y\in
\Fa$,
\[\sigma_H(xy)=\left \{\begin{array}{ll}
\sigma_H(x)&\text{if $\sigma_H(x)\neq 0$,}\\
\sigma_H(y)&\text{if $\sigma_H(x)=0$.}\end{array}\right .\]
The identity element $\ifa$ of $\Fa$ is the intersection of the
hyperplanes of $\A$ which has the sign sequence $\sigma_H(\ifa)=0$ for
any $H\in\A$. On the other hand, $\Ed$ is also a semigroup with
multiplication defined by, for $X,Y\in\Ed$, $X\vee Y$ is the sum of
the subspaces $X$ and $Y$, i.e., the small subspace of $\Re^d$
containing both $X$ and $Y$. The identity element of $\Ed$ is $\ied$
which is also the intersection of all hyperplanes of $\A$. Clearly,
$X\vee Y=Y$ if $Y\Rleq X$. We have $\supp(xy)=\supp(x)\vee \supp(y)$.

Let $R$ be an integral domain and $R\Fa,R\Ed$ be the semigroup
$R$-algebras. The algebra $R\Fa$ is called the face semigroup
algebra. The surjection $\supp:\Fa\to\Ed$ induces the surjective
$R$-algebra homomorphism $\supp:R\Fa\to R\Ed$.

\begin{theorem}[\cite{Bidigare:1997a}]\label{th:Bidigare} Let $k$ be a
  field. The algebra $\k\Fa$ is basic, $\Rad(\k\Fa)=\Ker(\supp)$ is
  spanned by \[\{x-x':X\in\Ed,\ x,x'\in\Fa\cap \supp^{-1}(X)\}\] and
  the irreducible representations of $\k\Fa$ are parametrised by the
  set $\Ed$.
\end{theorem}

In view of \Cref{th:Bidigare}, a complete set of primitive orthogonal
idempotents of $\k\Fa$ is parametrised by the set $\Ed$ which we shall
now describe. More generally, let
\[e_{\Re^d}=\bigcap_{H\in\Fa} H^+\in R\Fa.\]
For any $X\in\Ed$, let $\supp(x)=X$, define the element $e_X$
recursively by the formula
\[e_X=x-\sum_{Y<X} xe_Y\in R\Fa.\]

\begin{theorem}[\cite{Saliola:2009a}]\label{th:faceidem} Let $X\in\Ed$
  and suppose that $\supp(x)=X$. The set $\{e_X\}_{X\in\Ed}\subseteq
  \k\Fa$ is a complete set of primitive orthogonal idempotents of
  $\k\Fa$.
\end{theorem}

\KJ{As \cite[\S2]{Saliola:2009a}, for any two $X,Z\in \Fa$, we write $(X,Z)$ and $[X,Z]$ for the open and closed intervals respectively and let $\ell([X,Z])$ denote the length of $[X,Z]$.}

\KJ{\begin{lemma}[{\cite[Proposition 8.5]{Saliola:2009a}}]\label{L:semigroupRel} Let $Q$ be the Ext quiver of $\k\Fa$. We have $\k\Fa\cong \k Q/I$ where $I$ is the ideal generated by, for each $X,Z\in Q$ such that $\ell([X,Z])=2$, the relations \[\sum_{Y\in (X,Z)} (X\to Y\to Z).\]
\end{lemma}}

Recall that $\cO$ is a local principal ideal domain with maximal ideal $(\pi)$, $\K$ is the field of fractions of $\cO$ and $\F=\cO/(\pi)$.

\begin{lemma}\label{le:semigroupidem}\
\begin{enumerate}[\rm (i)]
\item The complete set of primitive orthogonal idempotents $\{e_X:X\in\Ed\}$ of $\cO\Fa$ reduces to a complete set of primitive orthogonal idempotents $\{\bar e_X:X\in\Ed\}$ of $\F\Fa$.
\item For $X\in \Ed$, over $\cO$, we have $\supp(e_X)=X+\sum_{Y<X}\lambda_Y Y$ for some $\lambda_Y\in \Z$.
\end{enumerate}
\end{lemma}
\begin{proof} Part (i) is clear using \Cref{th:faceidem} and the definition of $e_X$. For part (ii), we argue by induction on the partial order $\Rleq$ of $\Ed$. When $X=\Re^d$, $\supp(e_X)=\Re^d$. Let $X,Y\in\Ed$, $Y<X$ and suppose that $\supp(e_Y)=Y+\sum_{Z<Y}\lambda_ZZ$ where $\lambda_Z\in\Z$. We have \[\supp(xe_Y)=X\vee Y+\sum_{Z<Y}\lambda_Z(X\vee Z)=Y+\sum_{Z<Y}\lambda_ZZ=\supp(e_Y).\] So $\supp(e_X)=X-\sum_{Y<X}\supp(xe_Y)$ has the desired property.
\end{proof}

\begin{lemma}\label{L:semigroupRad} \KJ{We have $\cO\Fa=\Sp_\cO\{e_X:X\in \Ed\}\oplus(\Rad(\K\Fa)\cap \cO\Fa)$ and $\Rad(\cO\Fa)=\pi \cO\Fa+(\Rad(\K\Fa)\cap \cO\Fa)$. }
\end{lemma}
\begin{proof}
Let $U=\Sp_\cO\{e_X:X\in \Ed\}+(\Rad(\K\Fa)\cap \cO\Fa)\subseteq
  \cO\Fa$. Clearly, the sum is direct and $U\subseteq \cO\Fa$. For the reverse
inclusion $\cO\Fa\subseteq U$, we argue by induction on the partial
order $\Rleq$ of $\Ed$. By definition, $x:=\bigcap_{H\in\Fa}
H^+=e_{\hat 0}\in U$. Notice that $\supp(x)=\Re^d$. For any $x'\in\Fa$
such that $\supp(x')=\Re^d$, we have $x'=x+(x'-x)\in U$. Let $X\in\Ed$
and suppose that, for any $Y<X$ and $\supp(y)=Y$, we have $y\in U$. As
such, for each $Y<X$, by \Cref{le:semigroupidem}(ii), $\supp(xe_Y)$ is
a linear combination of $Z$ such that $Z\Rleq Y<X$ and hence $xe_Y$
belongs in $U$. Therefore,
\[x=e_X-\sum_{Y<X}xe_Y\in U.\]
For $x'\in\Fa$ such that $\supp(x')=X$, we argue as before that $x'\in
U$. The expression for $\Rad(\cO\Fa)$ follows from the decomposition of $\cO\Fa$.
\end{proof}

We now describe the simple modules for the face semigroup
algebras. Let $X\in\Fa$ and $S_X$ be the $\cO$-free module of rank $1$
with the action of $\cO\Fa$ defined by the character
\[\chi_X(y)=\left\{\begin{array}{ll} 1&\supp(y)\Rleq X,\\
0&\text{otherwise.}\end{array}\right .\]
Then $\hat S_X$ and $\bar S_X$ are irreducible modules for $\K\Fa$ and
$\F\Fa$ respectively by \cite{Brown:2000a}.


\begin{theorem}\label{th:face}
The face semigroup algebra $\cO\Fa$ satisfies \Cref{hyp:basic}. In
particular, \Cref{th:basic} holds for $\cO\Fa$ and, for $X,Y\in\Ed$,
we have, for all $t\ge  0$, $\Ext^t_{\cO\Fa}(S_X,S_Y)$ is $\cO$-free
and furthermore,
\begin{align*}
\Ext^t_{\F\Fa}(\bar S_X,\bar S_Y)&\cong \F\otimes_\cO\Ext^t_{\cO\Fa}(S_X,S_Y),\\
\Ext^t_{\K\Fa}(\hat S_X,\hat S_Y)&\cong \K\otimes_\cO\Ext^t_{\cO\Fa}(S_X,S_Y).
\end{align*}
\end{theorem}
\begin{proof}  \KJ{By \Cref{le:semigroupidem,L:semigroupRad}, the face semigroup algebra satisfies \Cref{hyp:rad}. Let $Q$ be the Ext quiver of $\F\Fa$ (or $\K\Fa$). Consider the ideal $I$ of $\cO Q$ generated by \[\sum_{Y\in (X,Z)} (X\to Y\to Z)\in\cO Q\] so that, by \Cref{L:semigroupRel}, $\hat A\cong \K\Fa$ and $\bar A\cong \F\Fa$ where $A=\cO Q/I$. $\JQ^n=0$ for large $n$. Clearly, $I\le J_Q^2$. Furthermore, $\JA/\JA^2\cong \JQ/\JQ^2$ which is $\cO$-free. Thus $\cO\Fa$ satisfies \Cref{hyp:basic}. As such, by \Cref{th:main}, both \Cref{th:hyp=>free,th:KOF} apply.}
\end{proof}

\begin{example}
Let $H_1,\ldots,H_n$ be hyperplanes in $\Re^d$ and suppose that there
exists a subspace $\ell$ such that $H_i\cap H_j=\ell$ for all $1\le
i\neq j\le n$. By \cite[Proposition 8.5]{Saliola:2009a}, the Ext
quiver $Q$ of $\cO\Fa$ is given by
\[\xymatrix{&&\Re^d\ar[dll]_{\alpha_1}\ar[dl]^{\alpha_2}\ar[drr]^{\alpha_n}\ar[dr]_{\alpha_{n-1}}&&\\ H_1\ar[drr]_{\beta_1}&H_2\ar[dr]^{\beta_2}&{\hspace{.4cm}\cdots\hspace{.4cm}}&H_{n-1}\ar[dl]_{\beta_{n-1}}&H_n\ar[dll]^{\beta_n}\\ &&\ell&&}\] and $\cO\Fa\cong \cO Q/I$ where $I$ is the ideal generated by $\sum^n_{i=1}\beta_i\alpha_i$. In particular,
\begin{align*}
\dim_\K\K\Fa&=4n+1=\dim_\F\F\Fa,\\ \dim_\K\Rad(\K\Fa)&=3n-1=\dim_\F\Rad(\F\Fa),\\  \dim_\K\Rad^2(\K\Fa)&=n-1=\dim_\F\Rad^2(\F\Fa),\\
\dim_\K\Rad^r(\K\Fa)&=0=\dim_\F\Rad^r(\F\Fa)
\end{align*} for all $r\ge  3$. Thus, $\cO\Fa$ satisfies \Cref{hyp:freeJ} using \Cref{le:Ofree-equiv}.
\end{example}

\begin{question} In general, does $\cO\Fa$ satisfy \Cref{hyp:freeJ}?
\end{question}
\fi

\section{$\R$-trivial Monoid Algebras satisfy \Cref{hyp:rad}}\label{S:R-Trivial}

A (finite) monoid $M$ is $\R$-trivial if the subsets $\sigma M$ are all distinct when $\sigma$ runs through all elements of $M$, i.e., if $\sigma M=\tau M$ then $\sigma=\tau$. As shown by Schocker \cite{Schocker:2008a}, this class of algebras includes the unital left regular band algebras (see \Cref{S:LRegularBand}) and $0$-Hecke algebras. In \cite{Berg/Bergeron/Bhargava/Saliola:2011a}, Berg--Bergeron--Bhargava--Saliola proved that the notion of $\R$-trivial monoid is equivalent to weakly ordered monoid and gave a recursive formula to construct a complete set of primitive orthogonal idempotents of  $\C M$. Subsequently, for a general ring $R$ (with known complete system of primitive orthogonal idempotents, for example, $\Z$ or a field), a recursive formula has also been given by  Nijholt--Rink--Schwenker \cite{Nijholt/Rink/Schwenker:2021a} for $R M$ by using the observation that $\Z M$ is isomorphic to a subalgebra $\U_M^\Z$ of the upper triangular matrices $\U(\Z,n)$ where $n=|M|$. In particular, we have the following theorem.

\begin{theorem}[\cite{Nijholt/Rink/Schwenker:2021a}]\label{T:NRS} There are $m$ disjoint nonempty subsets $T_1,\ldots,T_m$ of $[1,n]$ such that $m$ is the number of simple modules for $\Z M$ and a complete set of primitive orthogonal idempotents $\{E_1,\ldots,E_m\}$ of $\U_M^\Z$ such that, for each $i\in [1,m]$ and $j\in [1,n]$, \[(E_i)_{jj}=\left \{\begin{array}{ll} 1&\text{if $j\in T_i$,}\\ 0&\text{if $j\not\in T_i$,}\end{array}\right .\] Furthermore, for arbitrary ring $R$, we have $R M\cong R\otimes_\Z \U^\Z_M=\U^R_M$ and, over a field $\k$, $\{\bar E_1,\ldots,\bar E_m\}$ is a complete set of primitive orthogonal idempotents of $\U^\k_M$.
\end{theorem}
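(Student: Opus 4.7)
The plan is to realise $\Z M$ concretely as a subalgebra of upper triangular matrices, and then construct the primitive idempotents by a recursive lifting. First, I would enumerate the elements of $M$ as $\sigma_1,\ldots,\sigma_n$ in a total order refining the right preorder $\sigma \le_\R \tau$ defined by $\sigma M \subseteq \tau M$. Because $M$ is $\R$-trivial, this preorder is antisymmetric and hence a partial order, so such a linear extension exists. The left regular representation $\lambda \colon \Z M \hookrightarrow M_n(\Z)$ is faithful (as $M$ is unital) and, because $\sigma \sigma_j M \subseteq \sigma_j M$ gives $\sigma \sigma_j \le_\R \sigma_j$, each $\lambda(\sigma)$ is upper triangular in the chosen basis. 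This identifies $\Z M$ with a subalgebra $\U_M^\Z$ of $\U(\Z,n)$.

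Next, I would identify the simple modules and the subsets $T_i$. By Clifford--Munn--Ponizovsky theory for finite monoids, the simple $\k M$-modules correspond to regular $\J$-classes; in the $\R$-trivial case each such class is a singleton consisting of an idempotent, and the corresponding simple module is one-dimensional and independent of the ground field. Let $e_1,\ldots,e_m \in M$ be the relevant idempotents and set $T_i = \{j : \sigma_j e_i = \sigma_j\}$. The diagonal entry $\lambda(e_i)_{jj}$ equals $1$ exactly when $j \in T_i$. Disjointness of the $T_i$ follows from $\R$-triviality: if $\sigma_j e_i = \sigma_j = \sigma_j e_{i'}$, the $\R$-class of $\sigma_j$ determines a unique right identity, forcing $e_i = e_{i'}$.

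Third, I would construct the orthogonal primitive idempotents $E_i$ by a recursive orthogonalisation. Order the $e_i$ compatibly with $\le_\R$ and take preliminary elements $f_i = \lambda(e_i)$, which are already idempotent modulo the strictly upper triangular part of $\U_M^\Z$ and carry the prescribed diagonal pattern. Applying the integer-coefficient idempotent-lifting operation $a \mapsto 3a^2 - 2a^3$ of Benson \cite[Theorem 1.7.3]{Benson:1991a} together with successive orthogonalisation against $E_1,\ldots,E_{i-1}$ yields $E_i \in \U_M^\Z$ differing from $f_i$ by strictly upper triangular terms, so the diagonal entries are preserved throughout. The process terminates because the strictly upper triangular ideal of $\U(\Z,n)$ is nilpotent, and integrality is maintained because the lifting polynomial has integer coefficients.

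Finally, the base-change assertion is essentially formal: since $\Z M$ is $\Z$-free of rank $n$ with $M$ as a basis, for any ring $R$ one has $RM \cong R \otimes_\Z \Z M \cong R \otimes_\Z \U_M^\Z = \U_M^R$. Over a field $\k$, the reductions $\bar E_i$ remain idempotent and orthogonal with the same diagonal support, and primitivity follows because $\bar E_i \cdot \k M$ has one-dimensional simple top $\bar S_i$ with endomorphism ring $\k$. The main obstacle is the third step: one must verify that at each stage the orthogonalisation against previously constructed $E_j$ can be carried out within $\U_M^\Z$ without disturbing the diagonal entries, which requires a careful analysis of how the apex structure of the idempotents of $M$ interacts with the $\R$-order used to define the matrix embedding.
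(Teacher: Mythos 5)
The paper cites this result from \cite{Nijholt/Rink/Schwenker:2021a} and does not give a proof, so there is no internal argument to compare against; what follows concerns your reconstruction on its own terms. Its most serious flaw is in the first step: the left regular representation $\lambda(\sigma)(\sigma_j)=\sigma\sigma_j$ is not upper triangular with respect to a linear extension of the $\R$-order. The inclusion $\sigma\sigma_j M\subseteq\sigma_j M$ that you invoke is false; what holds is $\sigma\sigma_j M\subseteq\sigma M$, which bounds $\sigma\sigma_j$ by $\sigma$ rather than by $\sigma_j$. Concretely, in the three-element left regular band $M=\{1,a,b\}$ with $ab=a$, $ba=b$, one has $aM=\{a\}$ and $bM=\{b\}$, so $(ab)M=\{a\}\not\subseteq\{b\}=bM$, and indeed $\lambda(b)$ is not upper triangular in any linear extension of $\le_\R$. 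The representation that is compatible with the $\R$-filtration is the right regular one, $\rho(\sigma)(\sigma_j)=\sigma_j\sigma$, since $\sigma_j\sigma\le_\R\sigma_j$; but $\rho$ is an anti-homomorphism, so to realise $\Z M$ (rather than $\Z M^{\mathrm{op}}$) inside upper triangular matrices one must pass to the contragredient of $\rho$ (equivalently, transpose and reverse the basis order).

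Your second step has two further gaps. Regular $\J$-classes of an $\R$-trivial monoid need not be singletons: in the example above, $a$ and $b$ lie in the same regular $\J$-class, so speaking of ``the'' idempotent of a regular $\J$-class is not well defined. And the sets $T_i=\{j:\sigma_j e_i=\sigma_j\}$ are not disjoint: in the same example, $ae=a$ for every idempotent $e\in M$, so the index of $a$ would lie in all of them. The diagonal supports $T_i$ in the theorem instead partition $[1,n]$ according to the apex of each $\sigma_j$ (the $\J$-class of idempotents acting as a right identity on $\sigma_j$), which is a different invariant from the $\J$-class of $\sigma_j$ itself. Since the recursive orthogonalisation in your third step takes the embedding of step one and the preliminary idempotents of step two as inputs, it cannot be assessed until those are repaired; and the worry you flag at the end, namely that orthogonalisation must stay inside $\U_M^\Z$ while preserving the diagonal, is precisely where the substance of the cited construction lies rather than a routine verification.
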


Again, let $\cO$ be a local principal ideal domain with maximal ideal $(\pi)$, $\K$ be the field of fractions of $\cO$ and $\F=\cO/(\pi)$.

\begin{theorem}\label{T:R-Trivial} The $\R$-trivial monoid algebras satisfy \Cref{hyp:rad}. In particular, \Cref{th:main}(ii) applies for this class of algebras.
\end{theorem}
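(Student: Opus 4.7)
The plan is to verify the three clauses of Hypothesis~\ref{hyp:rad} for $\AO=\cO M$: finite rank over $\cO$, liftability of orthogonal idempotent decompositions of $1$ from $\FM$ to $\cO M$, and the identity $\Rad(\cO M)=\pi\cO M+\JA$. Finite rank is immediate since $M$ is finite, so $\cO M$ is $\cO$-free of rank $|M|$.

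For the lifting clause, I would invoke Theorem~\ref{T:NRS}, which provides a complete orthogonal system $\{E_1,\dots,E_m\}\subseteq\cO M\cong\U^\cO_M$ of primitive idempotents whose reductions modulo $\pi$ form a complete primitive orthogonal system $\{\bar E_1,\dots,\bar E_m\}$ in $\FM$. Any orthogonal decomposition $\bar 1=\bar f_1+\cdots+\bar f_k$ of the identity in $\FM$ can be refined to a complete primitive one, which, since $\FM$ is basic with $m$ simple modules, is conjugate to $\{\bar E_1,\dots,\bar E_m\}$ by some element of $1+\Rad(\FM)$; lifting this conjugating element to $1+r$ with $r\in\JA$ and collecting the corresponding sums of $E_i$'s produces the required lift to $\cO M$.

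For the radical identity, I would first establish the $\cO$-module decomposition
\begin{equation*}
\cO M=\Sp_\cO\{E_1,\dots,E_m\}\oplus(\cO M\cap\Rad(\KM)).
\end{equation*}
Under the embedding $\cO M\cong\U^\cO_M$, each element of $M$ acts by an upper-triangular matrix whose diagonal entries lie in $\{0,1\}$; this is implicit in the $\R$-trivial construction and in the choice of $E_i$ in Theorem~\ref{T:NRS}. Consequently, any $x\in\cO M$ has diagonal entries in $\cO$, and the projection $\KM\twoheadrightarrow\KM/\Rad(\KM)\cong\K^m$, which reads off the coefficients on the images $\tilde E_1,\dots,\tilde E_m$, is determined by these diagonal entries. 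Hence $x$ maps to $\sum_i\lambda_i\tilde E_i$ with $\lambda_i\in\cO$, so $x-\sum_i\lambda_i E_i$ lies in $\cO M\cap\Rad(\KM)=\JA$. The sum is direct because the diagonal patterns of the $E_i$'s are $\K$-linearly independent.

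Once the decomposition is in hand, $\JA$ is nilpotent (contained in the nilpotent ideal $\Rad(\KM)$, using that multiplication in $\cO M$ agrees with multiplication in $\KM$), and the quotient $\cO M/(\pi\cO M+\JA)$ is isomorphic to $\FM/\bar\JA\cong\prod_{i=1}^m\F$, which is semisimple. Arguing exactly as in Theorem~\ref{th:basic}(vii)---$\pi\cO M\subseteq\Rad(\cO M)$ because any maximal left ideal intersects $\cO$ in $(\pi)$, and $\JA\subseteq\Rad(\cO M)$ by nilpotence---we conclude $\Rad(\cO M)=\pi\cO M+\JA$. The second assertion then follows from \Cref{th:main}(i) and (ii). The main obstacle will be the decomposition step, specifically verifying that the semisimple projection of an element of $\cO M$ has $\cO$-coefficients; this depends on the explicit upper-triangular realization from Theorem~\ref{T:NRS}, after which the argument closely parallels Corollary~\ref{C: hypbasic for Des} for the descent algebra.
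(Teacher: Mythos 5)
Your proposal is correct and follows essentially the same route as the paper: both identify $\cO M$ with the upper-triangular realisation $\U^\cO_M$, invoke the idempotents $E_i$ from Theorem~\ref{T:NRS}, and exploit that $\Rad(\U^\K_M)$ is the strictly-upper-triangular part so that diagonal coefficients of elements of $\U^\cO_M$ land in $\cO$. The only difference is presentational: you first establish the $\cO$-module splitting $\cO M=\Sp_\cO\{E_i\}\oplus\JA$ and then deduce the radical identity as in Theorem~\ref{th:basic}(vii) (mirroring Corollary~\ref{C: hypbasic for Des}), whereas the paper shows directly that any $C\in\Rad(\U^\cO_M)$ lies in $\pi\T\oplus\U^+$ and subtracts $\sum c_iE_i$ with $c_i\in(\pi)$ to land in $\JA$; the two arguments rest on the same facts and are equivalent. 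One small imprecision worth noting: your lifting step asserts conjugacy to $\{\bar E_i\}$ by an element of $1+\Rad(\FM)$, which holds only after reordering the idempotents to match the standard basis of $\FM/\Rad(\FM)\cong\F^m$ (the paper glosses over the lifting in the same spirit), but this is easily repaired and does not affect the argument.
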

\begin{proof} We identify $\cO M$ with $\U^\cO_M$ under the isomorphism in \Cref{T:NRS}. The algebra $\cO M$ has finite $\cO$-rank as $M$ is finite. By \Cref{T:NRS}, the primitive orthogonal idempotents of $\U^\F_M$ are obtained from those in $\U^\Z_M$ (and hence $\U^\cO_M$) by taking entry-wise modulo $(\pi)$. In particular, they are liftable to $\U^\cO_M$. We are left to show that  \[\Rad(\U^\cO_M)=\pi \U^\cO_M+(\Rad(\U^\K_M)\cap \U^\cO_M).\] Let $\T$ and $\U^+$ be the diagonal and strictly upper triangular matrices in $\U(\cO,n)$ respectively. Clearly, $\Rad(\U^\cO_M)$ belongs in $\pi \T\oplus \U^+$. Let $C\in \Rad(\U^\cO_M)$. By \Cref{T:NRS}, there exist $c_1,\ldots,c_m\in (\pi)$ such that $C-\sum_{i=1}^mc_i E_i\in \U^+$. Since $\Rad(\U^\K_M)=\U^\K_M\cap \U^+$ and $\U^\K_M$ is obtained from $\U^\cO_M$ by extension of scalar entry-wise, we have $C-\sum_{i=1}^mc_i E_i\in \Rad(\U^\K_M)\cap \U^\cO_M$. Thus $\cO M$ satisfies \Cref{hyp:rad}. By \Cref{th:main}(i), $\cO M$ satisfies \Cref{hyp:Ext} and hence \Cref{th:main}(ii) applies for $\cO M$.
\end{proof}

\begin{example} Let $(W,S)$ be a Coxeter system as in \Cref{S:Des} and $S=\{s_i:i\in I\}$. The $0$-Hecke algebra $\Hec_R$ over an arbitrary commutative ring $R$ with 1 is the $R$-algebra with identity generated by $\{T_i:i\in I\}$ subject to the relations
\begin{enumerate}[\rm (i)]
\item $T_i^2=-T_i$ for all $i\in I$, and
\item for $i,j\in I$ with $i\neq j$, if $n_{ij}$ is the order of $s_is_j$, we have \[\underbrace{T_iT_jT_i\cdots}_{\text{$n_{ij}$ factors}}=\underbrace{T_jT_iT_j\cdots}_{\text{$n_{ij}$ factors}}.\]
\end{enumerate} Notice that $\Hec_\K\cong \K\otimes_\cO\Hec_\cO$ and $\Hec_\F\cong \F\otimes_\cO \Hec_\cO$. Norton \cite{Norton:1979a} showed that the $0$-Hecke algebras are basic over arbitrary field and their simple modules are parametrised by the subsets of $S$. For any $J\subseteq S$, let $S_J$ be the $\cO$-free module of rank $1$ defined by the character \[\lambda_J(T_i)=\left \{\begin{array}{ll} 0&\text{if $s_i\in J$,}\\ -1&\text{if $s_i\not\in J$.}\end{array}\right .\] As such, $\hat S_J:=\K\otimes_\cO S_J$ and $\bar S_J:=\F\otimes_\cO S_J$ are the simple modules for $\Hec_\K$ and $\Hec_\F$ respectively. Following Schocker \cite{Schocker:2008a}, the $0$-Hecke algebras are $\R$-trivial monoid algebras. As such, by \Cref{T:R-Trivial}, for $I,J\subseteq S$ and all $t\ge  0$, $\Ext^t_{\Hec_\cO}(S_I,S_J)$ is $\cO$-free
and furthermore,
\begin{align*}
\Ext^t_{\Hec_\F}(\bar S_I,\bar S_J)&\cong \F\otimes_\cO\Ext^t_{\Hec_\cO}(S_I,S_J),\\
\Ext^t_{\Hec_\K}(\hat S_I,\hat S_J)&\cong \K\otimes_\cO\Ext^t_{\Hec_\cO}(S_I,S_J).
\end{align*} In particular, $\dim_\F \Ext^t_{\Hec_\F}(\bar S_I,\bar S_J)=\dim_\K \Ext^t_{\Hec_\K}(\hat S_I,\hat S_J)$.
\end{example}

\section{Connected CW Left Regular Bands satisfy \Cref{hyp:basic}}\label{S:LRegularBand}

In this section, we consider the class of algebras $\k B$ where $B$ is a left regular band, i.e., $B$ is a semigroup satisfying the identities $x^2=x$ and $xyx=xy$ for all $x,y\in B$. Since $B$ may not have an identity, the algebra $\k B$ does not necessarily admit a unit.  However, following Margolis--Saliola--Steinberg \cite{Margolis/Saliola/Steinberg:2021a}, to guarantee a unit, it suffices to assume that $B$ is a connected CW left regular band -- we shall not need the full definition here and refer the reader to their paper. In the case $B$ is a monoid, it is also $\R$-trivial (see \Cref{S:R-Trivial}). This class of algebras includes the face algebras of hyperplane arrangements in real spaces. In the special case when the hyperplane arrangement is given by the reflection arrangement of a (finite) Coxeter group $W$, the descent algebra of $W$ is isomorphic to the opposite of the fixed point space of the face algebra under the action of $W$ as shown by Bidigare \cite{Bidigare:1997a}.

Throughout this section, we assume that $B$ is a connected CW left regular band. Let $\Lambda(B)=\{Bb:b\in B\}$ be the support semilattice of $B$ and \[\sigma:B\to \Lambda(B)\] be the support map, defined as $\sigma(a)=Ba$, which is a semigroup homomorphism. Recall that, given a graded (finite) poset $(\Rleq ,P)$, the closed interval $[X,Z]$ is the subset $\{Y:X\Rleq Y\Rleq Z\}$ of $P$. Similarly, we denote $(X,Z)$ for the open interval. The rank (or length) $\ell([X,Z])$ of $[X,Z]$ is the length of a longest chain starting at $X$ and ending at $Y$.

\begin{theorem}[{\cite[Theorem 1.1(4,5)]{Margolis/Saliola/Steinberg:2021a}}]\label{T:MSS} Let $Q$ be the Ext quiver of $\k B$. The support semilattice $\Lambda(B)$ is a graded poset and $Q$ is the Hasse diagram of $\Lambda(B)$. Furthermore, $Q$ is acyclic, $\k B$ is basic and $\k B\cong \k Q/I$ where $I$ is the ideal generated by, for any two vertices $X,Z$ in $Q$ such that $\ell([X,Z])=2$, the relations \[\sum_{Y\in (X,Z)} (X\to Y\to Z).\]
\end{theorem}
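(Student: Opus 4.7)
The plan is to assemble the theorem from the foundational representation theory of left regular bands (LRBs), refined by the CW hypothesis. The starting point is the support map $\sigma:B \to \Lambda(B)$, $b \mapsto Bb$, which is a surjective semigroup homomorphism onto the meet-semilattice $\Lambda(B)$. A standard LRB computation shows that the kernel of the induced $\k$-algebra map $\k B \to \k\Lambda(B)$ coincides with the Jacobson radical of $\k B$, so the quotient is the commutative semisimple algebra $\k\Lambda(B)$; this yields both that $\k B$ is basic and that its simple modules are one-dimensional and indexed by $\Lambda(B)$. The grading on $\Lambda(B)$ comes from the CW structure: each element of $\Lambda(B)$ is the support of a unique open cell, and its rank equals the dimension of that cell. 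The connectedness hypothesis is exactly what makes this rank function well-defined and compatible with the order on $\Lambda(B)$.

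Next I would compute the Ext quiver. Using a recursive construction of orthogonal primitive idempotents $\{e_X : X \in \Lambda(B)\}$ built by M\"obius inversion on $\Lambda(B)$ (analogous to the face-algebra idempotents of Saliola recalled earlier in the paper), I would exhibit the first two terms of a minimal projective resolution of each simple $S_X$: the cover is $\k B e_X$, and the first syzygy modulo its own radical decomposes as $\bigoplus_{Z} S_Z$, with each $S_Z$ appearing exactly once when $Z$ covers $X$ in $\Lambda(B)$ and not at all otherwise. This identifies $Q$ with the Hasse diagram of $\Lambda(B)$, and acyclicity is automatic since the Hasse diagram of a finite poset has no oriented cycles. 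The fact that the multiplicities are $0$ or $1$ is precisely where the regularity of the CW structure enters directly, ruling out repeated arrows.

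Finally, for the presentation $\k B \cong \k Q/I$, I would define a surjection $\psi:\k Q \twoheadrightarrow \k B$ sending each vertex $X$ to $e_X$ and each arrow $X \to Z$ to $e_X b_Z e_Z$ for a chosen representative $b_Z \in \sigma^{-1}(Z)$. The quadratic relations $\sum_{Y \in (X,Z)} (X \to Y \to Z) = 0$ for $\ell([X,Z]) = 2$ then fall out of the inclusion--exclusion identity governing the recursive idempotents, and can be viewed as encoding the combinatorics of the $2$-cells of the CW complex that lie above the interval $[X,Z]$. The main obstacle will be showing that these are \emph{all} the relations, i.e.\ that $\ker\psi$ is generated by them as a two-sided ideal. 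My plan for this step is a graded dimension count: use the cell filtration of the CW complex to compute the dimensions of the radical layers of $\k B$, compare with those of $\k Q/I'$ where $I'$ is the ideal generated by the stated quadratic relations, and check that equality forces $\ker\psi = I'$. This last step is where the full CW hypothesis, rather than merely the LRB hypothesis, becomes essential.
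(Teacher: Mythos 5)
The paper does not prove this statement: it is quoted verbatim from Margolis--Saliola--Steinberg (their Theorem~1.1(4,5)) and used as a black box. So there is no ``paper's own proof'' to compare your argument against, and any blind proof attempt is really a proposal to re-derive a result from the cited reference.

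Taken on its own terms, your outline is in the right spirit for the first half: identifying $\Rad(\k B)$ with the kernel of $\k B\to\k\Lambda(B)$, deducing that $\k B$ is basic with one-dimensional simples indexed by $\Lambda(B)$, and constructing primitive idempotents by M\"obius-style recursion are all standard for left regular bands. But two points need fixing. First, the claim that ``each element of $\Lambda(B)$ is the support of a unique open cell'' is not correct: the support map $\sigma$ is far from injective, and in general many cells share a support; the grading on $\Lambda(B)$ in the CW setting comes from the dimension common to all cells in $\sigma^{-1}(X)$ (or, in MSS's language, from the geometric realization of $B_{\ge X}$), not from uniqueness. Second, and more seriously, the final step -- that $\ker\psi$ is generated by the listed quadratic relations -- is where essentially all of the work in MSS lies, and a ``graded dimension count via the cell filtration'' is not a self-contained strategy. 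What MSS actually do is construct explicit minimal projective resolutions of the simples using the topology of the regular CW complexes attached to intervals in $\Lambda(B)$; the quadratic relation at rank-two intervals records the boundary map in cellular chains, and the fact that there are no higher relations ultimately rests on (co)homology vanishing/connectivity statements for these complexes. Without that topological input you cannot compute the radical layers of $\k B$, so the ``compare dimensions'' step is circular. If you want a genuine proof rather than a citation, you need to reproduce that topological argument; otherwise, the honest move -- which is what the paper itself does -- is to invoke the MSS theorem directly.
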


Same as before, let $\cO$ be a local principal ideal domain with maximal ideal $(\pi)$, $\K$ be the field of fractions of $\cO$ and $\F=\cO/(\pi)$.

\begin{theorem}\label{T:RegularBand} Let $B$ be a connected CW left regular band. The algebra $\OB$ satisfies \Cref{hyp:basic}. In particular, \Cref{th:main}(ii) applies for $\OB$.
\end{theorem}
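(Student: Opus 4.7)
The plan is to exhibit $\OB$ in the form required by \Cref{hyp:basic} and then invoke \Cref{th:main}.

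Let $Q$ be the Hasse diagram of the finite graded semilattice $\Lambda(B)$, so that $Q$ is a finite acyclic quiver by \Cref{T:MSS}. Take $I\le\cO Q$ to be the two-sided ideal generated by the integer relations
\[
\sum_{Y\in(X,Z)}(X\to Y\to Z)\in\cO Q
\]
for every pair $X,Z$ of vertices with $\ell([X,Z])=2$, and set $\AO:=\cO Q/I$. Since each generator is an $\cO$-combination of length-two paths, $I\le\JQ^2$; since $Q$ is finite and acyclic, $\JQ$ is nilpotent in $\cO Q$, so $\JQ^n\le I$ for $n$ large enough.

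Next I would check that $\AO$ is $\cO$-free of finite rank by applying \Cref{le:Ofree} with $X=\cO Q$ (free on the paths of $Q$) and $Y=I$. By \Cref{T:MSS} over $\K$, the submodule $\hat Y=\K\otimes_\cO I$ is exactly the kernel of $\K Q\twoheadrightarrow\KB$, so $\dim_\K\hat Y=(\#\text{paths in }Q)-|B|$. Applying \Cref{T:MSS} over $\F$, the image $\bar Y$ of $\F\otimes_\cO I$ in $\F Q$ is the ideal generated by the mod-$\pi$ reductions of the generators of $I$, which is the kernel of $\F Q\twoheadrightarrow\FB$ and so has the same dimension over $\F$. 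Equality of $\dim_\K\hat Y$ and $\dim_\F\bar Y$ in \Cref{le:Ofree} gives that $\AO$ is $\cO$-free of rank $|B|$.

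It then remains to identify $\AO$ with $\OB$. The vertex-and-arrow data of the MSS presentation determines an $\cO$-algebra homomorphism $\psi:\cO Q\to\OB$, and the generators of $I$ map to elements of $\cO B$ that become zero in both $\KB$ and $\FB$; because $\cO B$ is $\cO$-free and hence embeds into $\KB$, these elements are already zero in $\OB$, so $\psi$ factors as $\bar\psi:\AO\to\OB$. Reducing mod $\pi$ recovers the MSS isomorphism $\F Q/\bar Y\cong\FB$, so $\bar\psi$ is surjective by Nakayama's lemma; since source and target are both $\cO$-free of rank $|B|$, the kernel of $\bar\psi$ has $\cO$-rank zero, and being a submodule of the $\cO$-free module $\AO$ it must vanish. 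Thus $\bar\psi$ is an isomorphism, $\OB$ satisfies \Cref{hyp:basic}, and \Cref{th:main}(i) then supplies \Cref{hyp:Ext}, so that \Cref{th:main}(ii) applies.

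The main obstacle is the third paragraph: producing a valid $\cO$-algebra map $\psi$ requires knowing that the MSS vertex and arrow data can be written with coefficients in $\cO$ (equivalently in $\Z$) rather than only after passing to a field. This should be available from the explicit constructions of Margolis--Saliola--Steinberg, whose primitive-idempotent formulae for connected CW left regular bands are integrally defined and so transport without denominators to $\cO B$.
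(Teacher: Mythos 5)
Your proposal follows essentially the same route as the paper: the MSS quiver $Q$ (Hasse diagram of $\Lambda(B)$), the ideal $I$ generated by the length-two relations, acyclicity giving $\JQ^n=0$, and the identification $\cO Q/I\cong\OB$. The paper's own proof is quite terse — it asserts $A=\cO Q/I\cong\OB$ without argument, from which $\cO$-freeness is automatic since $\OB$ is the free $\cO$-module on $B$; your version supplies those missing details by proving $\cO$-freeness directly from \Cref{le:Ofree} via the dimension count $\dim_\K\hat I=\dim_\F\bar I$ (both equal the number of paths minus $|B|$, by \Cref{T:MSS} over each field), and then establishing the isomorphism with a Nakayama-plus-rank argument. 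The obstacle you flag — needing the MSS vertex idempotents and arrow representatives to live integrally in $\cO B$ so that $\psi$ is defined over $\cO$ and specialises to the MSS maps over both $\K$ and $\F$ — is a genuine point and is precisely what the paper's assertion $\cO Q/I\cong\OB$ glosses over; as you say, it should follow from the recursive idempotent formulas of Margolis--Saliola--Steinberg (cf.\ the analogous formulas of Saliola for the face algebra), which have $\Z$-coefficients and so make sense in $\cO B$.
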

\begin{proof} Let $Q$ be the Ext quiver of $\FB$ (or $\KB$). Consider the ideal $I$ of $\cO Q$ generated by \[\sum_{Y\in (X,Z)} (X\to Y\to Z)\in\cO Q\] and let $A=\cO Q/I\cong \OB$. By \Cref{T:MSS}, $\FB\cong \F\otimes_\cO A$, $\KB\cong \K\otimes_\cO A$ and $\JQ^n=0$ for some positive integer $n\ge  2$ as $Q$ is acyclic. Clearly, $I\le J_Q^2$. Thus $A$ satisfies \Cref{hyp:basic} and hence \Cref{hyp:rad} by \Cref{th:main}(i). Therefore \Cref{th:main}(ii) applies for $A$.
\end{proof}

At this point, it is not clear whether the algebra $\OB$ satisfies our strongest hypothesis \Cref{hyp:freeJ}. We consider an example.

\begin{example} Consider the face algebra $\cO\Fa$ with the hyperplane arrangement defined by the hyperplanes $H_1,\ldots,H_n$ in $\Re^d$. Suppose further that there
exists a subspace $X$ such that $H_i\cap H_j=X$ for all $1\le
i\neq j\le n$. By \cite[Proposition 8.5]{Saliola:2009a}, the Ext
quiver $Q$ of $\F\Fa$ (or $\K\Fa$) is given by the diagram below.
\[\xymatrix{&&\Re^d\ar[dll]_{\alpha_1}\ar[dl]^{\alpha_2}\ar[drr]^{\alpha_n}\ar[dr]_{\alpha_{n-1}}&&\\ H_1\ar[drr]_{\beta_1}&H_2\ar[dr]^{\beta_2}&{\hspace{.4cm}\cdots\hspace{.4cm}}&H_{n-1}\ar[dl]_{\beta_{n-1}}&H_n\ar[dll]^{\beta_n}\\ &&X&&}\]  The only relation generating the ideal $I$ is $\sum^n_{i=1}\beta_i\alpha_i$. As such,
\begin{align*}
\dim_\K\K\Fa&=4n+1=\dim_\F\F\Fa,\\ \dim_\K\Rad(\K\Fa)&=3n-1=\dim_\F\Rad(\F\Fa),\\  \dim_\K\Rad^2(\K\Fa)&=n-1=\dim_\F\Rad^2(\F\Fa),\\
\dim_\K\Rad^r(\K\Fa)&=0=\dim_\F\Rad^r(\F\Fa)
\end{align*} for all $r\ge  3$. Thus, $\cO\Fa$ satisfies \Cref{hyp:freeJ} using \Cref{le:Ofree-equiv} ((v) $\Rightarrow$ (i)). In particular, both \Cref{th:main}(ii) and (iii) apply for $\cO\Fa$.
\end{example}

\begin{question} Let $B$ be a connected CW left regular band. Does $\OB$ satisfy \Cref{hyp:freeJ}?
\end{question}



\bibliographystyle{amsplain}
\bibliography{repcoh}

\newcommand{\noopsort}[1]{}
\providecommand{\bysame}{\leavevmode\hbox to3em{\hrulefill}\thinspace}
\providecommand{\MR}{\relax\ifhmode\unskip\space\fi MR }
\providecommand{\MRhref}[2]{%
  \href{http://www.ams.org/mathscinet-getitem?mr=#1}{#2}
}
\providecommand{\href}[2]{#2}
\begin{thebibliography}{10}

\bibitem{Atkinson:1992a}
M.~D. Atkinson, \emph{{Solomon's descent algebra revisited}}, Bull.\ London
  Math.\ Soc. \textbf{24} (1992), no.~6, 545--551.

\bibitem{Atkinson/Pfeiffer/vanWilligenburg:2002a}
M.~D. Atkinson, G.~Pfeiffer, and S.~J. van Willigenburg, \emph{{The $p$-modular
  descent algebras}}, Algebras and Representation Theory \textbf{5} (2002),
  no.~1, 101--113.

\bibitem{Atkinson/vanWilligenburg:1997a}
M.~D. Atkinson and S.~J. van Willigenburg, \emph{{The $p$-modular descent
  algebra of the symmetric group}}, Bull.\ London Math.\ Soc. \textbf{29}
  (1997), no.~4, 407--414.

\bibitem{Benson:nilCoxeter}
D.~J. Benson, \emph{{The cohomology of the nilCoxeter algebra}}, {Homology,
  Homotopy and Applications, to appear}.

\bibitem{Benson:1991a}
\bysame, \emph{{Representations and cohomology I: Basic representation theory
  of finite groups and associative algebras}}, Cambridge Studies in Advanced
  Mathematics, vol.~30, Cambridge University Press, 1991, reprinted in
  paperback, 1998.

\bibitem{Berg/Bergeron/Bhargava/Saliola:2011a}
C.~Berg, N.~Bergeron, S.~Bhargava, and F.~Saliola, \emph{{Primitive orthogonal
  idempotents of $R$-trivial monoids}}, J.~Algebra \textbf{348} (2011),
  446--461.

\bibitem{Bergeron/Bergeron/Howlett/Taylor:1992a}
F.~Bergeron, N.~Bergeron, R.~B. Howlett, and D.~E. Taylor, \emph{{A
  decomposition of the descent algebra of a finite Coxeter group}}, J.~Alg.\
  Combin. \textbf{1} (1992), no.~1, 23--44.

\bibitem{Bernstein/Gelfand/Gelfand:1973a}
I.~N. Bernstein, I.~M. Gelfand, and S.~I. Gelfand, \emph{{Schubert cells and
  cohomology of the spaces $G/P$}}, Russian Math.\ Surveys \textbf{28} (1973),
  no.~3, 1--26.

\bibitem{Bidigare:1997a}
T.~P. Bidigare, \emph{{Hyperplane arrangement face algebras and their
  associated Markov chains}}, Ph. d. dissertation, University of Michigan,
  1997.

\bibitem{Bishop/Pfeiffer:2013a}
M.~Bishop and G.~Pfeiffer, \emph{{On the quiver presentation of the descent
  algebra of the symmetric group}}, J.~Algebra \textbf{383} (2013), 212--231.

\bibitem{Blessenohl/Laue:1996a}
D.~Blessenohl and H.~Laue, \emph{{On the descending Loewy series of Solomon's
  descent algebra}}, J.~Algebra \textbf{180} (1996), no.~3, 698--724.

\bibitem{Blessenohl/Laue:2002a}
\bysame, \emph{{The module structure of Solomon's descent algebra}},
  J.~Austral.\ Math.\ Soc. \textbf{72} (2002), no.~3, 317--333.

\bibitem{Bonnafe/Pfeiffer:2008a}
C.~Bonnaf\'e and G.~Pfeiffer, \emph{{Around Solomon's descent algebras}},
  Algebras and Representation Theory \textbf{11} (2008), no.~6, 577--602.

\bibitem{Bosma/Cannon/Playoust:1997a}
W.~Bosma, J.~Cannon, and C.~Playoust, \emph{{The Magma algebra system, I. The
  user language}}, J.\ Symbolic Comput. \textbf{24} (1997), 235--265.

\bibitem{Cartan/Eilenberg:1956a}
H.~Cartan and S.~Eilenberg, \emph{{Homological algebra}}, Princeton
  Mathematical Series, no.~19, Princeton Univ.\ Press, 1956.

\bibitem{Douglass/Pfeiffer/Roehrle:2014a}
J.~M. Douglass, G.~Pfeiffer, and G.~R\"ohrle, \emph{{Cohomology of Coxeter
  arrangements and Solomon's descent algebra}}, Trans.\ Amer.\ Math.\ Soc.
  \textbf{366} (2014), no.~10, 5379--5407.

\bibitem{Fomin/Stanley:1994a}
S.~Fomin and R.~P. Stanley, \emph{{Schubert polynomials and the nilCoxeter
  algebra}}, Adv.\ in Math. \textbf{103} (1994), 196--207.

\bibitem{Fulkerson/Gross:1965a}
D.~R. Fulkerson and O.~A. Gross, \emph{{Incidence matrices and interval
  graphs}}, Pacific J.\ Math. \textbf{15} (1965), 835--855.

\bibitem{Garsia/Reutenauer:1989a}
A.~M. Garsia and C.~Reutenauer, \emph{{A decomposition of Solomon's descent
  algebra}}, Adv.\ in Math. \textbf{77} (1989), no.~2, 189--262.

\bibitem{Geck/Pfeiffer:2000a}
M.~Geck and G.~Pfeiffer, \emph{{Characters of finite Coxeter groups and
  Iwahori--Hecke algebras}}, London Math.\ Soc.\ Monographs, vol.~21, Oxford
  University Press, 2000.

\bibitem{Humphreys:1990a}
J.~E. Humphreys, \emph{{Reflection groups and Coxeter groups}}, Cambridge
  Studies in Advanced Mathematics, vol.~29, Cambridge University Press, 1990.

\bibitem{Khovanov:2001a}
M.~Khovanov, \emph{{NilCoxeter algebras categorify the Weyl algebra}}, Comm.\
  Algebra \textbf{29} (2001), no.~11, 5033--5052.

\bibitem{Lam:2001a}
T.~Y. Lam, \emph{{A first course in noncommutative rings}}, Graduate Texts in
  Mathematics, vol. 131, 2nd edition, Springer-Verlag, Ber\-lin/New York, 2001.

\bibitem{Lim:2023a}
K.~J. Lim, \emph{{Modular idempotents for the descent algebras of type $A$ and
  higher Lie powers and modules}}, J.~Algebra \textbf{628} (2023), 98--162.

\bibitem{Margolis/Saliola/Steinberg:2021a}
S.~Margolis, F.~V. Saliola, and B.~Steinberg, \emph{{Cell complexes, poset
  topology and the representation theory of algebras arising in algebraic
  combinatorics and discrete geometry}}, vol. 274, Mem.\ Amer.\ Math.\ Soc.,
  no. 1345, American Math.\ Society, 2021.

\bibitem{Nijholt/Rink/Schwenker:2021a}
E.~Nijholt, B.~W. Rink, and S.~Schwenker, \emph{{A new algorithm for computing
  idempotents of $\mathcal R$-trivial modules}}, J.~Algebra Appl. \textbf{20}
  (2021), no.~12, Paper No. 2150227, 18pp.

\bibitem{Norton:1979a}
P.~N. Norton, \emph{{$0$-Hecke algebras}}, J.~Austral.\ Math.\ Soc. \textbf{27}
  (1979), no.~3, 337--357.

\bibitem{Pfeiffer:2009a}
G.~Pfeiffer, \emph{{A quiver presentation for Solomon's descent algebra}},
  Adv.\ in Math. \textbf{220} (2009), no.~5, 1428--1465.

\bibitem{Saliola:2008a}
F.~V. Saliola, \emph{{On the quiver of the descent algebra}}, J.~Algebra
  \textbf{320} (2008), no.~11, 3866--3894.

\bibitem{Saliola:2009a}
\bysame, \emph{{The face semigroup algebra of a hyperplane arrangement}},
  Canadian J.\ Math. \textbf{61} (2009), no.~4, 904--929.

\bibitem{Schocker:2004a}
M.~Schocker, \emph{{The descent algebra of the symmetric group}},
  {Representations of finite dimensional algebras and related topics in Lie
  theory and geometry} (V.~Dlab and C.~M. Ringel, eds.), Fields Inst.\
  Commun.\, vol.~40, American Math.\ Society, 2004, pp.~145--161.

\bibitem{Schocker:2008a}
\bysame, \emph{{Radical of weakly ordered semigroup algebras}}, J.~Alg.\
  Combin. \textbf{28} (2008), no.~1, 231--234.

\bibitem{Solomon:1976a}
L.~Solomon, \emph{{A Mackey formula in the group ring of a Coxeter group}},
  J.~Algebra \textbf{41} (1976), no.~2, 255--264.

\end{thebibliography}

\end{document}